\renewcommand{\k}{\kappa}
\renewcommand{\(}{\left\(}
\renewcommand{\)}{\right\)}
\renewcommand{\[}{\left\[}
\renewcommand{\]}{\right\]}
\renewcommand{\i}{\infty}
\numberwithin{equation}{section}
 \theoremstyle{plain}
\newtheorem{theorem}{Theorem}[section]
\newtheorem{lemma}[theorem]{Lemma}
\newtheorem{remark}[]{Remark}
\newtheorem{corollary}[theorem]{Corollary}
\newtheorem{example}[]{Example}
\def\proof{\@ifnextchar[{\@oproof}{\@nproof}}
\def\@oproof[#1][#2]{\trivlist\item[\hskip\labelsep\textit{#2 Proof of\
#1.}~]\ignorespaces}
\def\@nproof{\trivlist\item[\hskip\labelsep\textit{Proof.}~]\ignorespaces}
\begin{document}
\title[Partition implications of a new three parameter $q$-series identity]{Partition implications of a new three parameter $q$-series identity} 

\author{Atul Dixit and Bibekananda Maji}\thanks{2010 \textit{Mathematics Subject Classification.} Primary 11P81, 11P84; Secondary 05A17.\\
\textit{Keywords and phrases.} partitions, $q$-series, smallest parts function, largest parts function, divisor function, weighted partition identities}
\address{Discipline of Mathematics, Indian Institute of Technology Gandhinagar, Palaj, Gandhinagar 382355, Gujarat, India} 
\email{adixit@iitgn.ac.in, bibekananda.maji@iitgn.ac.in}
\begin{abstract}
A generalization of a beautiful $q$-series identity found in the unorganized portion of Ramanujan's second and third notebooks is obtained. As a consequence, we derive a new three-parameter identity which is a rich source of partition-theoretic information. In particular, we use this identity to obtain a generalization of a recent result of Andrews, Garvan and Liang, which itself generalizes the famous result of Fokkink, Fokkink and Wang. This three-parameter identity also leads to several new weighted partition identities as well as a natural proof of a recent result of Garvan. This natural proof gives interesting number-theoretic information along the way. We also obtain a new result consisting of an infinite series involving a special case of Fine's function $F(a,b;t)$, namely, $F(0,q^n;cq^n)$. For $c=1$, this gives Andrews' famous identity for $\textup{spt}(n)$ whereas for $c=-1, 0$ and $q$, it unravels new relations that the divisor function $d(n)$ has with other partition-theoretic functions such as the largest parts function $\textup{lpt}(n)$. 
\end{abstract}
\maketitle
\section{Introduction}\label{intro}
The unorganized portion of Ramanujan's second and third notebooks contains five $q$-series identities \cite[p.~354--355]{ramanujanoriginalnotebook2}, \cite[p. 302--303]{ramanujantifr} that were first proved by Berndt \cite[p.~262--265]{bcbramforthnote}. The first one is \cite[p.~262, Entry 1]{bcbramforthnote}
\begin{equation*}
\frac{(-aq)_{\infty}}{(bq)_{\infty}}=\sum_{n=0}^{\infty}\frac{(-b/a)_na^nq^{n(n+1)/2}}{(q)_n(bq)_n},
\end{equation*}
where $a\neq 0, 1-bq^n\neq 0, n\geq 1$. It is also recorded in the Lost Notebook \cite[p.~370]{lnb}. Here, and throughout the paper, $|q|<1$, and
\begin{align}
(A)_0 &:=(A;q)_0 =1, \qquad \nonumber\\
(A)_n &:=(A;q)_n  = (1-A)(1-Aq)\cdots(1-Aq^{n-1}),
\qquad n \geq 1, \label{finpoch}\\
(A)_{\infty} &:=(A;q)_{\i}  = \lim_{n\to\i}(A;q)_n\nonumber.
\end{align}
Moreover, the definition in \eqref{finpoch} can be extended to all integers $n$ by defining 
\begin{equation*}
(A)_n=\frac{(A)_{\infty}}{(Aq^{n})_{\infty}}.
\end{equation*}
The third one in the list \cite[p.~354]{ramanujanoriginalnotebook2}, \cite[p.~263, Entry 2]{bcbramforthnote} states that for $a\neq q^{-n}, n\geq 1$,
\begin{equation*}
(aq)_{\infty}\sum_{n=1}^{\infty}\frac{na^nq^{n^2}}{(q)_n(aq)_n}=\sum_{n=1}^{\infty}\frac{(-1)^{n-1}a^nq^{n(n+1)/2}}{1-q^n}.
\end{equation*}
The sequence generated by the special case $a=1$ of the series on the above left-hand side was posted by Deutsch on the Online Encyclopedia of Integer Sequences (A115995) and Jovovic found that the same sequence is generated by the special case $a=1$ of the above right-hand side. Andrews, Chan and Kim \cite[p.~82]{ack} rediscovered the above identity. The second one in Ramanujan's list \cite[p.~354]{ramanujanoriginalnotebook2}, \cite[p.~264, Entry 4]{bcbramforthnote} is given by
\begin{equation}\label{Garvan's identity}
\sum_{n=1}^{\infty} \frac{(-1)^{n-1} z^n q^{\frac{n(n+1)}{2} } }{(1-q^n) (z q)_n  } = \sum_{n=1}^{\infty} \frac{z^n q^n }{1-q^n}
\end{equation}
where $z\neq q^{-n}, n\geq 1$, was rediscovered by Uchimura \cite[Equation (3)]{uchimura81} and Garvan \cite{garvan0}. The special case $z=1$ of the above identity is due to Kluyver \cite{kluyver}:
\begin{align}\label{Uchimura}
\sum_{n=1}^{\infty} \frac{(-1)^{n-1} q^{\frac{n(n+1)}{2} } }{(1-q^n) ( q)_n  } = \sum_{n=1}^{\infty} \frac{ q^n }{1-q^n},
\end{align}
It was rediscovered by Fine \cite[p.~14, Equations (12.4), (12.42)]{fine} and Uchimura \cite[Theorem 2]{uchimura81}. Kluyver's identity admits a beautiful partition-theoretic interpretation due to Fokkink, Fokkink and Wang \cite{ffw95}. Before stating this interpretation, we discuss the notation used throughout the paper.
\begin{itemize}
\item $\pi$: an integer partition,

\item $p(n)$: the number of integer partitions of $n$

\item $ s(\pi):=$ the smallest part of $\pi$,

\item $l(\pi):=$ the largest part of $\pi$,

\item $\#(\pi):=$ the number of parts of $\pi$,

\item $\mathrm{rank}(\pi)= l(\pi)- \#(\pi)$,

\item $L(\pi):=$ total number of appearances of the largest part of $\pi$,
  
\item $\nu_{d}(\pi):=$ the number of parts of $\pi$ without multiplicity,

\item $\mathcal{P}(n):=$ collection of all integer partitions of $n$,

\item $\mathcal{D}(n):=$ collection of all distinct partitions of $n$,

\item $\mathcal{P}_{o}(n):=$ collection of all overpartitions of $n$,

\item $\mathcal{P}^{*}(n):=$ partitions without gaps (that is, partitions into consecutive integers with smallest part $1$).
\end{itemize}

Then the result of Fokkink, Fokkink and Wang states that if $d(n)$ denotes the number of divisors of $n$,
\begin{align}\label{ffwidty}
\sum_{ \pi \in \mathcal{D}(n)  } (-1)^{ \# (\pi)-1 } s(\pi)=d(n).
\end{align}
In his seminal paper \cite{andrews08}, Andrews revisited the above interpretation and showed that its generating function version, namely Kluyver's identity \eqref{Uchimura}, is simply a corollary of the differentiation of the $q$-analogue of Gauss' theorem \cite[p.~20, Corollary 2.4]{gea1998}. There is a vast literature on such $q$-series identities related to divisor functions.
For developments on this topic since the appearance of Kluyver's identity \eqref{Uchimura}, we refer the reader to a paper of Guo and Zeng \cite{guozeng2015} though we do point out those that are relevant here. As shown in the paper of Ismail and Stanton \cite{ismailstantonanncomb}, the genesis of many of such $q$-series identities lies in the theory of basic hypergeometric functions. For recent developments since the appearance of Guo and Zeng's paper, see \cite{xu} and \cite{xucen}.

Andrews, Garvan and Liang \cite{agl13} denoted the left-hand side of \eqref{ffwidty} by $\textup{FFW}(1,n)$ and considered its generalization $\textup{FFW}(c,n)$ defined by
\begin{equation}\label{ffwcn}
\textup{FFW}(c,n):=\sum_{\pi\in\mathcal{D}(n)}(-1)^{\#(\pi)-1}\left(1+c+\cdots+c^{s(\pi)-1}\right).
\end{equation}
They showed that \cite[Theorem 3.5]{agl13}
\begin{align}\label{ffwz}
\sum_{n=1}^{\infty} \mathrm{FFW}(c,n) q^n = \sum_{n=1}^{\infty} \frac{(-1)^{n-1} q^{\frac{n(n+1)}{2}}}{(1-cq^n) (q)_n } = \frac{1}{1-c} \left(1- \frac{(q)_{\infty} }{(c q)_\infty }  \right).
\end{align}
The above identity is valid for $|q|<|c|<1$. Another generalization of \eqref{ffwidty} was obtained by Patkowski through a `sum of tails' identity \cite[Corollary 2.4]{patkowskicm}.

In this paper, among other things, we generalize the above result of Andrews, Garvan and Liang by introducing one more variable, namely, $z$. This is achieved by first obtaining a more general result which generalizes a yet another beautiful identity of Ramanujan which is the last one in \cite[p.~354]{ramanujanoriginalnotebook2}, \cite[p.~263, Entry 3]{bcbramforthnote}. This identity does not seem to have received the attention it should have. It states that for $a\neq 0$ and $(b)_n\neq 0$ for $n\geq 1$,
\begin{align}\label{neglected}
\sum_{n=1}^{\infty} \frac{ (b/a)_n a^n }{ (1- q^n) (b)_n } = \sum_{n=1}^{\infty} \frac{a^n - b^n }{ 1- q^n }. 
\end{align}
\section{Main results}\label{mare}
Our generalization of \eqref{neglected} is given in the following theorem.
\begin{theorem}\label{gen of Ramanujan's identity}
 Let $a, b, c$ be three complex numbers such that $|a|<1$, $|b|<1$ and $|c|\leq 1$. Then
\begin{align}\label{entry3gen}
\sum_{n =1}^{\infty}  \frac{ (b/a)_n a^n }{ (1- c q^n) (b)_n } =  \sum_{m=0}^{\infty}\frac{(b/c)_mc^m}{(b)_m}\left(\frac{aq^m}{1-aq^m}-\frac{bq^m}{1-bq^m}\right).
\end{align}
In addition, if $|b|<|c|$,
\begin{align}\label{entry3gen00}
\sum_{n =1}^{\infty}  \frac{ (b/a)_n a^n }{ (1- c q^n) (b)_n }=\frac{ ( b/c )_{\infty }}{(b)_{\infty} }   \sum_{n=0}^{\infty} \frac{(c)_n ( b/c)^n }{(q)_n}    \sum_{m=1}^{\infty} \frac{a^m - b^m }{1- c q^{m+n}  }.
\end{align}
\end{theorem}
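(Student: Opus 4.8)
The plan is to prove \eqref{entry3gen} first and then to deduce \eqref{entry3gen00} from it, since the second assertion turns out to be essentially a repackaging of the first once the $q$-binomial theorem is invoked. Throughout I would work in the region $|a|,|b|<1$, $|c|\le 1$ (with $|b|<|c|$ reserved for the second part), where every series below converges absolutely, so that the rearrangements I make are legitimate.

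To attack \eqref{entry3gen}, the natural first step is to expand the two Lambert-type pieces on the right as geometric series,
\begin{equation*}
\frac{aq^m}{1-aq^m}-\frac{bq^m}{1-bq^m}=\sum_{i=1}^{\infty}(a^i-b^i)q^{mi},
\end{equation*}
and to interchange the order of summation, which rewrites the right-hand side of \eqref{entry3gen} as $\sum_{i=1}^{\infty}(a^i-b^i)\Phi(cq^i)$, where $\Phi(t):=\sum_{m=0}^{\infty}(b/c)_m t^m/(b)_m$. The heart of the matter is then to identify this additive expression with the product-type series on the left of \eqref{entry3gen}, and I expect this to be the main obstacle. Writing $1/(1-cq^n)=(c)_n/\left((1-c)(cq)_n\right)$ shows that the left side is a nonterminating ${}_3\phi_2$ in the argument $a$, so no one-line summation is available; moreover its specialization $c=1$ is already the nontrivial identity \eqref{neglected}, which I am allowed to assume. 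My plan for this step is to subject $\Phi$ to a suitable ${}_2\phi_1$ transformation of Heine type so as to expose the factor $1/(1-cq^{\bullet})$, after which the two sides can be matched against \eqref{neglected}; an alternative is to verify the identity as a formal power series in $a$, where the coefficient of $a^i$ on the left is a single hypergeometric sum and on the right is exactly $\Phi(cq^i)$.

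Granting \eqref{entry3gen}, the passage to \eqref{entry3gen00} is clean, and this is precisely where the hypothesis $|b|<|c|$ is used. Starting from the rewritten right-hand side $\sum_{i\ge1}(a^i-b^i)\Phi(cq^i)$, I would replace the coefficient $(b/c)_m/(b)_m$ by means of the $q$-binomial theorem in the form
\begin{equation*}
\frac{(b/c)_m}{(b)_m}=\frac{(b/c)_{\infty}}{(b)_{\infty}}\sum_{n=0}^{\infty}\frac{(c)_n\left((b/c)q^m\right)^n}{(q)_n},
\end{equation*}
valid precisely when $|b/c|<1$; this follows from $\sum_{n\ge0}(c)_nw^n/(q)_n=(cw)_{\infty}/(w)_{\infty}$ with $w=(b/c)q^m$, together with $(bq^m)_{\infty}/\left((b/c)q^m\right)_{\infty}=(b)_{\infty}(b/c)_m/\left((b)_m(b/c)_{\infty}\right)$. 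Substituting this, factoring out $(b/c)^n$, interchanging the $m$- and $n$-summations, and collapsing the geometric series $\sum_{m\ge0}(cq^{n+i})^m=1/(1-cq^{n+i})$ converts the expression into
\begin{equation*}
\frac{(b/c)_{\infty}}{(b)_{\infty}}\sum_{n=0}^{\infty}\frac{(c)_n(b/c)^n}{(q)_n}\sum_{i=1}^{\infty}\frac{a^i-b^i}{1-cq^{n+i}},
\end{equation*}
which is the right-hand side of \eqref{entry3gen00}. The only points that then require care are the legitimacy of the two interchanges of summation and of the geometric collapse, all of which hold in the stated region $|a|,|b|<1$, $|b|<|c|\le 1$.
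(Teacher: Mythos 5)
Your handling of the second assertion is fine: the reduction of the right-hand side of \eqref{entry3gen} to $\sum_{i\ge1}(a^i-b^i)\Phi(cq^i)$ and the passage from there to \eqref{entry3gen00} via the $q$-binomial theorem are correct, and in substance identical to the paper's own derivation (the paper cites Fine's partial-fraction expansion \eqref{fine16.3} where you invoke \eqref{q-binomial thm} directly; these are equivalent). The genuine gap is that \eqref{entry3gen} itself is never proved. You explicitly defer what you call ``the heart of the matter'' --- identifying $\sum_{i\ge1}(a^i-b^i)\Phi(cq^i)$ with the left-hand side --- to one of two plans, and neither works as described. The Heine-type plan, once unwound, only converts $\Phi(cq^i)$ into $\frac{(b/c)_\infty}{(b)_\infty}\sum_{n\ge0}\frac{(c)_n(b/c)^n}{(q)_n}\frac{1}{1-cq^{n+i}}$, i.e.\ it reproduces the equivalence of the two \emph{right}-hand sides of \eqref{entry3gen} and \eqref{entry3gen00}; and ``matching against \eqref{neglected}'' cannot close the argument, because \eqref{neglected} is precisely the special case $c=1$ of the statement to be proved and carries no information about general $c$. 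The coefficient-of-$a^i$ plan rests on a miscount: since $(b/a)_na^n=\prod_{j=0}^{n-1}(a-bq^j)$ is a polynomial in $a$ of degree $n$ containing \emph{all} powers $a^0,\dots,a^n$, the coefficient of $a^i$ on the left-hand side is an infinite sum over all $n\ge\max(i,1)$ (each term contributing a $q$-binomial multiple of $(-b)^{n-i}q^{(n-i)(n-i-1)/2}/\bigl((1-cq^n)(b)_n\bigr)$), and proving that this infinite sum equals $\Phi(cq^i)$ is exactly the content of the theorem, not a routine verification.

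What is missing is the paper's key mechanism, or some substitute for it: a functional equation in $(a,b)$. Writing $G(a,b;c)$ for the left-hand side, a termwise computation of $G(a,b;c)-G(aq,bq;c)$ combined with Berndt's evaluation \eqref{constant} yields
\begin{equation*}
G(a,b;c)-\frac{c-b}{1-b}\,G(aq,bq;c)=\frac{a}{1-a}-\frac{b}{1-b},
\end{equation*}
and iterating this relation telescopes to
\begin{equation*}
G(a,b;c)-\frac{c^{n+1}(b/c)_{n+1}}{(b)_{n+1}}\,G(aq^{n+1},bq^{n+1};c)=\sum_{k=0}^{n}\frac{c^k(b/c)_k}{(b)_k}\left(\frac{aq^k}{1-aq^k}-\frac{bq^k}{1-bq^k}\right),
\end{equation*}
after which the hypothesis $|c|\le1$ forces the remainder term to vanish as $n\to\infty$, giving \eqref{entry3gen}. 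Until you supply this step (or any complete proof of \eqref{entry3gen}), your argument establishes only the implication \eqref{entry3gen} $\Rightarrow$ \eqref{entry3gen00}.
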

It is easy to see that when we let $c=1$ in \eqref{entry3gen00}, only the $n=0$ term survives in the series on the right-hand side resulting in \eqref{neglected}.

Letting $a\to 0$ and replacing $b$ by $zq$ in \eqref{entry3gen} leads to the aforementioned generalization of Andrews, Garvan and Liang's \eqref{ffwz} given below. 
\begin{theorem}\label{gGaravan}
For $|zq|<1$ and $|c|\leq 1$,
\begin{align}\label{gen of Garavan}
\sum_{n=1}^{\infty} \frac{(-1)^{n-1} z^n q^{\frac{n(n+1)}{2} } }{(1-c q^n) (z q)_n  } = \frac{z}{c}\sum_{n=1}^{\infty}\frac{(zq/c)_{n-1}}{(zq)_n}(cq)^n.
\end{align}
\end{theorem}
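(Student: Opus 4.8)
The plan is to obtain Theorem~\ref{gGaravan} as a direct specialization of \eqref{entry3gen}: I would let $a\to 0$ and then set $b=zq$, so that no new machinery is needed and all the work lies in carrying out the limit term by term and reconciling signs.

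First I would rewrite the left-hand side of \eqref{entry3gen}. The key point is that the apparent singularity at $a=0$ is illusory, since
\begin{equation*}
(b/a)_n\,a^n=\prod_{k=0}^{n-1}\left(1-\tfrac{b}{a}q^k\right)a^n=\prod_{k=0}^{n-1}\left(a-bq^k\right),
\end{equation*}
and hence $\lim_{a\to 0}(b/a)_n\,a^n=\prod_{k=0}^{n-1}(-bq^k)=(-1)^n b^n q^{n(n-1)/2}$. Thus the $a\to 0$ limit of the left side of \eqref{entry3gen} is $\sum_{n\ge 1}(-1)^n b^n q^{n(n-1)/2}/\big((1-cq^n)(b)_n\big)$. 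Replacing $b$ by $zq$ and using $b^n q^{n(n-1)/2}=z^n q^{n}\,q^{n(n-1)/2}=z^n q^{n(n+1)/2}$ together with $(b)_n=(zq)_n$, this becomes $\sum_{n\ge1}(-1)^n z^n q^{n(n+1)/2}/\big((1-cq^n)(zq)_n\big)$, which is exactly $-1$ times the left-hand side of \eqref{gen of Garavan}.

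Next I would pass to the limit on the right-hand side of \eqref{entry3gen}. Only the summand $aq^m/(1-aq^m)$ depends on $a$; it vanishes as $a\to 0$, leaving $-\sum_{m\ge 0}\frac{(b/c)_m c^m}{(b)_m}\cdot\frac{bq^m}{1-bq^m}$. Putting $b=zq$, reindexing by $n=m+1$, and invoking the recurrence $(zq)_n=(zq)_{n-1}(1-zq^n)$ to combine $\frac{1}{(zq)_{n-1}}\cdot\frac{1}{1-zq^n}=\frac{1}{(zq)_n}$, while gathering $c^{\,n-1}zq^n=\tfrac{z}{c}(cq)^n$, turns this into $-\tfrac{z}{c}\sum_{n\ge 1}\frac{(zq/c)_{n-1}}{(zq)_n}(cq)^n$, which is $-1$ times the right-hand side of \eqref{gen of Garavan}. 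Cancelling the common factor $-1$ from both sides then gives \eqref{gen of Garavan}.

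The one genuinely technical point — and the step I would be most careful to justify — is the interchange of the limit $a\to 0$ with the two infinite sums. On the right this is trivial, since only a single, uniformly dominated summand carries the $a$-dependence. On the left I would appeal to Tannery's theorem: for $|a|\le r<1$ and $|b|<1$ one has $|a-bq^k|\le(1+r)/2<1$ for all large $k$, so that $\big|(b/a)_n a^n\big|=\prod_{k=0}^{n-1}|a-bq^k|$ is bounded, uniformly for $a$ near $0$, by a constant times a geometrically small factor; since $1/|1-cq^n|$ and $1/|(b)_n|$ stay bounded, the summands are dominated by the terms of a convergent series and the termwise limit is licensed. Apart from this and the bookkeeping of the $(-1)^n$ versus $(-1)^{n-1}$ sign, the derivation is entirely routine.
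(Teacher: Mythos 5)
Your proposal is correct and is essentially identical to the paper's own (very brief) proof, which likewise lets $a\to 0$ in \eqref{entry3gen} via $\lim_{a\to 0}(b/a)_n a^n=(-b)^n q^{n(n-1)/2}$ and then sets $b=zq$. Your additional care with the sign bookkeeping, the reindexing $n=m+1$ on the right-hand side, and the Tannery-type justification of the termwise limit only makes explicit what the paper leaves to the reader.
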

The study of weighted counts of partitions was initiated by Alladi \cite{alladiantheini}, \cite{alladitrans}, \cite{alladiramer}. There have been interesting further studies on weighted partition identities \cite{alladi2016}, \cite{andrews08}, \cite{berkovichuncujnt}, \cite{garvan1}, to name a few. Comparing the coefficients of $q^n$ on both sides of \eqref{gen of Garavan}, we obtain the following general weighted partition identity which gives several interesting corollaries, some of which are new and others, well-known. 
\begin{theorem}\label{gwpi}
If $z$ and $c$ are not functions of $q$,
\begin{align}\label{gwpieqn}
 \sum_{\pi \in \mathcal{D}(n) } (-1)^{\#(\pi)-1} z^{l(\pi) + 1 -s(\pi)} c^{s(\pi)-1}\frac{\left(\frac{z}{c} \right)^{s(\pi)} - 1}{ \left(\frac{z}{c} \right)-1}=\sum_{\pi \in \mathcal{P}(n) } z^{\#(\pi)} c^{l(\pi) -1} \left(1- \tfrac{1}{c}  \right)^{\nu_d(\pi) -1 }.
\end{align}
\end{theorem}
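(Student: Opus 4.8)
The plan is to prove Theorem~\ref{gwpi} by extracting the coefficient of $q^n$ from both sides of the identity \eqref{gen of Garavan} in Theorem~\ref{gGaravan}, which we may assume. First I would expand the left-hand side of \eqref{gen of Garavan} as a sum over partitions into distinct parts. The factor $z^n q^{n(n+1)/2}/(zq)_n$ is the standard generating function for distinct partitions with exactly $n$ parts, weighted by $z^{\#(\pi)}$; more precisely, $q^{n(n+1)/2}/(zq)_n$ encodes a partition into $n$ distinct parts by starting from the staircase $1+2+\cdots+n$ and adding the partition generated by $1/(zq)_n$. To incorporate the denominator $1/(1-cq^n)$, I would expand it as a geometric series $\sum_{j\ge 0} c^j q^{jn}$; this has the effect of adding $j$ copies of the largest part, so that a term with $s(\pi)=s$, $l(\pi)=l$, $\#(\pi)=m$ picks up the sign $(-1)^{m-1}$ together with a power-counting bookkeeping in $z$ and $c$ that must be matched to the stated weight $z^{l(\pi)+1-s(\pi)} c^{s(\pi)-1}\bigl((z/c)^{s(\pi)}-1\bigr)/\bigl((z/c)-1\bigr)$.

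The main work lies in correctly tracking the exponents of $z$ and $c$. The cleanest route is to reparametrize a distinct partition $\pi$ with smallest part $s$ and largest part $l$: conjugation-type or shift arguments relate the series $z^n q^{n(n+1)/2}/(zq)_n$ together with the geometric factor to a sum over all distinct partitions where the exponent of $q$ is the size, the exponent of $z$ records the largest part shifted by the smallest part, and the exponent of $c$ records $s(\pi)-1$. The appearance of the geometric-type quotient $\bigl((z/c)^{s}-1\bigr)/\bigl((z/c)-1\bigr)=1+(z/c)+\cdots+(z/c)^{s-1}$ strongly suggests that the coefficient extraction produces a finite geometric sum indexed by the smallest part $s=s(\pi)$, exactly as in the Andrews--Garvan--Liang expression \eqref{ffwcn} but now carrying the extra variable $z$. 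I would verify this by fixing the part-structure and summing the contributions of the additional copies of the largest part against the staircase offset.

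For the right-hand side, I would expand $\tfrac{z}{c}\sum_{n\ge 1}(zq/c)_{n-1}(cq)^n/(zq)_n$ as a generating function over ordinary (unrestricted) partitions $\pi\in\mathcal{P}(n)$. Here $(cq)^n/(zq)_n$ should generate a partition with largest part tracked by $c$ and number of parts tracked by $z$, while the numerator $(zq/c)_{n-1}=\prod_{k=1}^{n-1}(1-(z/c)q^k)$ is what introduces the factor $(1-1/c)^{\nu_d(\pi)-1}$ counting the number $\nu_d(\pi)$ of \emph{distinct} part-sizes. Writing the product $(zq/c)_{n-1}$ out as a sum over subsets records which part-sizes actually occur, and each occurring size below the largest contributes a factor $(1-1/c)$ after the powers of $z$ and $c$ are reconciled; the global prefactor $z/c$ and the factors $c^{l(\pi)-1}$, $z^{\#(\pi)}$ emerge from matching the largest part and the total number of parts.

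The hard part will be the exponent bookkeeping on the right-hand side: establishing that the product $(zq/c)_{n-1}$ expands precisely into the weight $(1-1/c)^{\nu_d(\pi)-1}$ requires identifying each factor $(1-(z/c)q^k)$ with the choice of whether the part-size $k$ is present, and then cancelling the resulting powers of $z$ and $q$ against the $z$'s and $q$'s coming from $(cq)^n/(zq)_n$ so that only $z^{\#(\pi)}c^{l(\pi)-1}(1-1/c)^{\nu_d(\pi)-1}$ survives. Once both coefficient extractions are carried out and shown to depend on $z$ and $c$ polynomially (so that equating coefficients of $q^n$ is legitimate for $z,c$ not functions of $q$), the identity \eqref{gwpieqn} follows immediately by comparing the two partition sums term by term.
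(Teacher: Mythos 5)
Your proposal is correct and follows essentially the same route as the paper: extract the coefficient of $q^n$ in \eqref{gen of Garavan}, interpret the left side via the staircase-plus-multiplicities encoding of distinct partitions with the smallest-part excess split between the $z$- and $c$-geometric series (producing the quotient $\bigl((z/c)^{s(\pi)}-1\bigr)/\bigl((z/c)-1\bigr)$), and interpret the right side over ordinary partitions with each ratio $(1-\tfrac{z}{c}q^i)/(1-zq^i)$ contributing the factor $(1-\tfrac1c)$ per occurring part-size below the largest. The only cosmetic difference is that the paper packages your ``sum over splittings of the smallest part'' into the quoted Andrews--Garvan--Liang expansion of $q^n/\bigl((1-zq^n)(1-cq^n)\bigr)$, which is the same computation.
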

If we let $z=1$ in Theorem \ref{gGaravan}, we obtain the following result, of which \eqref{fcz1} as well as the left-hand side of \eqref{fcz1pi} was obtained by Andrews, Garvan and Liang.
\begin{corollary}\label{fixed c and z tends 1}
For $|c|\leq 1$, we have
\begin{align}\label{fcz1}
\sum_{n=1}^{\infty} \frac{(-1)^{n-1} q^{\frac{n(n+1)}{2} } }{(1-c q^n) ( q)_n  }= \sum_{n=1}^{\infty}\frac{c^{n-1}(q/c)_{n-1} q^n}{(q)_n}=\frac{1}{1-c} \left(1- \frac{(q)_{\infty} }{(c q)_\infty }  \right).
\end{align}
Hence if $c$ is not a function of $q$,
\begin{align}\label{fcz1pi}
\sum_{ \pi \in \mathcal{D}(n)  } (-1)^{\#(\pi) -1} \left(\frac{c^{s(\pi)}-1}{c-1}\right) =  \sum_{ \pi \in \mathcal{P}(n)} c^{l(\pi)-1} \left( 1- \frac{1}{c} \right)^{\nu_{d}(\pi) -1}.
\end{align}
\end{corollary}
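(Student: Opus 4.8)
The plan is to read off both displays of Corollary \ref{fixed c and z tends 1} as the $z=1$ specializations of the two results immediately preceding it, and then to close the chain of equalities in \eqref{fcz1} using an identity already recorded in the excerpt. No new machinery should be needed; the work is entirely in specializing and in one elementary simplification of a partition weight.

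First I would set $z=1$ directly in \eqref{gen of Garavan}. The left-hand side becomes the first member of \eqref{fcz1} verbatim, while the right-hand side becomes $\frac1c\sum_{n=1}^{\infty}\frac{(q/c)_{n-1}}{(q)_n}(cq)^n$; absorbing one factor of $c$ produces the middle member $\sum_{n=1}^{\infty}\frac{c^{n-1}(q/c)_{n-1}q^n}{(q)_n}$. This already gives the equality of the first two expressions, valid since $|q|<1$, $|c|\le 1$ and $c\neq 0$ secure convergence. To reach the closed form I would simply invoke the Andrews--Garvan--Liang evaluation \eqref{ffwz}, which states precisely that the first member equals $\frac{1}{1-c}(1-(q)_{\infty}/(cq)_{\infty})$; combined with the preceding equality this settles \eqref{fcz1}. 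If a self-contained derivation is preferred, the middle series can be summed on its own by the $q$-binomial theorem $\sum_{n\ge0}(a)_n t^n/(q)_n=(at)_{\infty}/(t)_{\infty}$ with $a=1/c$, $t=cq$: this gives $(q)_{\infty}/(cq)_{\infty}=\sum_{n\ge0}(1/c)_n(cq)^n/(q)_n$, and then the Pochhammer factorization $(1/c)_n=(1-1/c)(q/c)_{n-1}$ with $1-1/c=-(1-c)/c$ converts $\frac{1}{1-c}(1-(q)_{\infty}/(cq)_{\infty})$ back into the middle member.

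For the partition identity \eqref{fcz1pi} I would put $z=1$ in the weighted identity \eqref{gwpieqn}. On the distinct-partition side the weight $z^{l(\pi)+1-s(\pi)}c^{s(\pi)-1}\frac{(z/c)^{s(\pi)}-1}{(z/c)-1}$ collapses: writing $s=s(\pi)$ and clearing denominators gives $c^{s-1}\frac{(1/c)^s-1}{(1/c)-1}=\frac{c^s-1}{c-1}$, which is exactly the Fokkink--Fokkink--Wang-type weight $1+c+\cdots+c^{s-1}$ appearing in \eqref{ffwcn}. On the ordinary-partition side the factor $z^{\#(\pi)}$ becomes $1$, leaving $c^{l(\pi)-1}(1-1/c)^{\nu_d(\pi)-1}$. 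Matching the two sides yields \eqref{fcz1pi}.

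I do not expect a genuine obstacle here; the only point requiring care is the apparent singularity at $c=1$. Each side of \eqref{fcz1} and \eqref{fcz1pi} extends continuously across $c=1$ (the closed form tending to Kluyver's $\sum_{n\ge1}q^n/(1-q^n)$ and the weight $\frac{c^s-1}{c-1}$ to $s(\pi)$), so the identities persist there in the limiting sense, recovering \eqref{Uchimura} and \eqref{ffwidty}. Every remaining step is a specialization or a single use of the $q$-binomial theorem.
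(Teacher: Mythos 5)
Your proposal is correct and follows the paper's own route: the paper obtains \eqref{fcz1} by setting $z=1$ in Theorem \ref{gGaravan} and \eqref{fcz1pi} by setting $z=1$ in Theorem \ref{gwpi}, with the closed form matching the Andrews--Garvan--Liang evaluation \eqref{ffwz}. Your self-contained derivation of the third member via the $q$-binomial theorem and the factorization $(1/c)_n=(1-1/c)(q/c)_{n-1}$ is a sound (and welcome) way to cover the full range $|c|\leq 1$, since \eqref{ffwz} as stated requires $|q|<|c|<1$.
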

One of the most important $q$-series of Ramanujan, and which has been the source of investigation from the point of view of both analytic and algebraic number theory, is
\begin{equation*}
\sigma(q):=\sum_{n=0}^{\infty} \frac{q^{\frac{n(n+1)}{2}}}{(-q)_n}.
\end{equation*}
For deep results associated with $\sigma(q)$, the reader is referred to \cite{andrews1986}, \cite{andrewsmonthly86} and \cite{adh}. In \cite{bandix1}, some results associated with one- and two-variable generalization of $\sigma(q)$, namely $\sigma(c, q)$ and $\sigma(c, d, q)$ were obtained. These functions are defined by
\begin{align}
\sigma(c, q)&:=\sum_{n=0}^{\infty} \frac{q^{n(n+1)/2} }{(1-c q^n ) (-q)_n},\label{otvg}\\
\sigma(c, d, q)&:=\sum_{n=0}^{\infty}\frac{(-cd)_n(1-cdq^{2n})q^{n(n+1)/2}}{(1-cq^n)(1-dq^n)(-q)_n}.\label{otvg1}
\end{align}
For example, it was shown that \cite[Theorem 1.1]{bandix1} for $|c|<1$,
\begin{equation*}
\sum_{n=1}^{\infty}\frac{q^{n(n+1)/2}}{(1-cq^n)(-q)_n}=\frac{1}{(-c)_{\infty}}\left(\sigma(q)+\frac{(-c)_{\infty}}{c-1}+2\sum_{m,n=0}^{\infty}\frac{(-q)_m}{(q)_m(q)_n}\frac{(-1)^nq^{n(n+1)/2}c^{m+n+1}}{(1-q^{n+m+1})}\right).
\end{equation*}
Letting $z=-1$ in Theorem \ref{gGaravan} gives us a simpler representation of $\sigma(c, q)$ given below.
\begin{corollary}\label{gen of sigma(q)}
For $|c|\leq 1$, we have
\begin{align*}
\sum_{n=1}^{\infty} \frac{q^{n(n+1)/2} }{(1-c q^n ) (-q)_n} = \sum_{n=1}^{\infty}  \frac{\left( \frac{-q}{c} \right)_{n-1} c^{n-1}q^n}{(-q)_{n}}.
\end{align*}
If $c$ is not a function of $q$, then 
\begin{align*}
\sum_{ \pi \in \mathcal{D}(n)  } (-1)^{\mathrm{rank}(\pi)} \frac{ (1-(-c)^{s(\pi)})}{1+c} =  \sum_{ \pi \in \mathcal{P}(n)} (-1)^{\#(\pi)-1} c^{l(\pi)-1} \left( 1- \frac{1}{c} \right)^{\nu_{d}(\pi) -1}.
\end{align*}
\end{corollary}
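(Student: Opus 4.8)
The plan is to obtain both assertions of the corollary by specializing $z=-1$ in the identities already established; no fresh machinery is required, and essentially all the work is sign bookkeeping.

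For the first, analytic identity I would set $z=-1$ in \eqref{gen of Garavan} of Theorem~\ref{gGaravan}, which is legitimate because $|zq|=|q|<1$ and $|c|\le 1$. On the left-hand side the factor $(-1)^{n-1}z^n$ becomes $(-1)^{n-1}(-1)^n=-1$, so the $n$th summand reduces to $-q^{n(n+1)/2}/((1-cq^n)(-q)_n)$. On the right-hand side the prefactor $z/c=-1/c$ combines with $(cq)^n$ to produce $-(-q/c)_{n-1}c^{n-1}q^n/(-q)_n$. Both sides have thus acquired the common factor $-1$, and cancelling it gives exactly the first displayed identity.

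For the partition identity I would instead specialize $z=-1$ in \eqref{gwpieqn} of Theorem~\ref{gwpi}. Two simplifications are needed. First, using $\mathrm{rank}(\pi)=l(\pi)-\#(\pi)$ together with $(-1)^{2\#(\pi)}=1$, the sign $(-1)^{\#(\pi)-1}(-1)^{l(\pi)+1-s(\pi)}$ collapses to $(-1)^{\mathrm{rank}(\pi)}(-1)^{s(\pi)}$. Second, a direct computation shows that the remaining factor $c^{s(\pi)-1}((-1/c)^{s(\pi)}-1)/((-1/c)-1)$ equals $(c^{s(\pi)}-(-1)^{s(\pi)})/(1+c)$. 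Multiplying these together and using $(-1)^{s}(c^{s}-(-1)^{s})=(-c)^{s}-1$, the left-hand summand becomes $(-1)^{\mathrm{rank}(\pi)}((-c)^{s(\pi)}-1)/(1+c)$, while the right-hand side of \eqref{gwpieqn} with $z=-1$ carries the factor $(-1)^{\#(\pi)}$. Multiplying the specialized identity through by $-1$ converts these into $(-1)^{\mathrm{rank}(\pi)}(1-(-c)^{s(\pi)})/(1+c)$ and $(-1)^{\#(\pi)-1}c^{l(\pi)-1}(1-1/c)^{\nu_d(\pi)-1}$ respectively, which is precisely the stated partition identity. Alternatively, one could extract the coefficient of $q^n$ from the first identity of the corollary, exactly as Theorem~\ref{gwpi} was deduced from Theorem~\ref{gGaravan}.

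The only genuinely delicate point — and thus the main obstacle, such as it is — is reconciling the signs: one must track the interplay of $(-1)^{\#(\pi)-1}$, $(-1)^{l(\pi)+1-s(\pi)}$, and the parity of $s(\pi)$, and in particular notice that after the substitution both sides of each identity pick up a stray overall factor $-1$ that has to be cancelled. It is also worth remarking that $z=-1$ causes no trouble in the apparently singular factor $((z/c)^{s}-1)/((z/c)-1)$, since $z/c=-1/c\neq 1$ throughout the relevant range of $c$, so no limiting argument is needed.
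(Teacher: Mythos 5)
Your proof is correct and matches the paper's approach exactly: the paper obtains this corollary precisely by setting $z=-1$ in Theorem \ref{gGaravan} (giving the $q$-series identity after cancelling the common factor $-1$) and in Theorem \ref{gwpi} (giving the weighted partition identity after the same sign bookkeeping you carry out). One tiny caveat: your closing remark that $z/c\neq 1$ throughout the relevant range fails at $c=-1$, but this is harmless because $\left(\left(z/c\right)^{s}-1\right)/\left(\left(z/c\right)-1\right)$ is really the polynomial $1+(z/c)+\cdots+(z/c)^{s-1}$, so the apparent singularity is removable and no limiting argument is needed there either.
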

When $c=0$, the left-hand side of the above equation readily implies the result in \cite{andrews1986} that $\sigma(q)$ is the generating function for the excess number of partition of $n$ into distinct parts with even rank over those with odd rank. 

Now let $c \rightarrow 1$ in Theorem \ref{gGaravan} to get 
\begin{corollary}\label{fixed z and c tends to 1}
Equation \ref{Garvan's identity} holds. Moreover, if $z$ is not a function of $q$,
\begin{align*}
\sum_{ \pi \in \mathcal{D}(n)  } (-1)^{\#(\pi) -1}  z^{l(\pi)+1}\left(\frac{1- z^{-s(\pi)}}{z-1}\right) = \sum_{d |n }z^d .
\end{align*}
\end{corollary}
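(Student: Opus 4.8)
The plan is to derive both assertions by specializing $c\to 1$ in the two identities already established: the analytic identity \eqref{gen of Garavan} of Theorem \ref{gGaravan} and the weighted partition identity \eqref{gwpieqn} of Theorem \ref{gwpi}. Since $c=1$ lies in the admissible range $|c|\le 1$, no delicate convergence issue arises on the series side, while on the combinatorial side each coefficient of $q^n$ is a finite sum, so the limit may be passed through term by term.

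First I would establish \eqref{Garvan's identity}. Setting $c=1$ in \eqref{gen of Garavan}, the left-hand side becomes verbatim the left-hand side of \eqref{Garvan's identity}. On the right-hand side $\frac{z}{c}\to z$, $(zq/c)_{n-1}\to (zq)_{n-1}$, and using $(zq)_n=(zq)_{n-1}(1-zq^n)$ the summand collapses to $\frac{q^n}{1-zq^n}$, giving $z\sum_{n=1}^{\infty}\frac{q^n}{1-zq^n}$. It then remains to check that this equals $\sum_{n=1}^{\infty}\frac{z^nq^n}{1-q^n}$; expanding each side as a double geometric series shows both equal $\sum_{a,b\ge 1} z^a q^{ab}$ (in the first, write $\frac{1}{1-zq^n}=\sum_{j\ge0}z^jq^{nj}$ and relabel $a=j+1$, $b=n$; in the second expand $\frac{1}{1-q^n}$ and set $a=n$), which finishes this part.

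For the partition identity I would take $c\to 1$ in \eqref{gwpieqn}. On the left, $c^{s(\pi)-1}\to 1$ and $\frac{(z/c)^{s(\pi)}-1}{(z/c)-1}\to\frac{z^{s(\pi)}-1}{z-1}$, so the summand tends to $z^{l(\pi)+1-s(\pi)}\frac{z^{s(\pi)}-1}{z-1}=z^{l(\pi)+1}\frac{1-z^{-s(\pi)}}{z-1}$, which is precisely the weight appearing in the statement. On the right, the factor $\left(1-\frac1c\right)^{\nu_d(\pi)-1}$ tends to $0$ whenever $\nu_d(\pi)\ge 2$ and to $1$ when $\nu_d(\pi)=1$, while $c^{l(\pi)-1}\to 1$; hence only partitions with a single distinct part size survive the limit.

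The main point, and the only place that needs care, is the identification of these surviving terms. A partition of $n$ with $\nu_d(\pi)=1$ consists of $k$ copies of a single part $d$ with $dk=n$, so it contributes $z^{\#(\pi)}=z^{k}=z^{n/d}$, and such partitions are indexed exactly by the divisors $d\mid n$. Summing gives $\sum_{d\mid n}z^{n/d}=\sum_{d\mid n}z^{d}$ after replacing $d$ by $n/d$, which is the right-hand side claimed. As a consistency check one can instead read off the coefficient of $q^n$ in $\sum_{n\ge1}\frac{z^nq^n}{1-q^n}$ from \eqref{Garvan's identity}, which is again $\sum_{d\mid n}z^d$. The only genuine subtlety is the $0^0$ ambiguity in the $\nu_d(\pi)=1$ term, which is handled cleanly by taking the limit $c\to1$ rather than substituting $c=1$ outright (and, as usual, $z\neq 1$ is assumed so that the weight $\frac{1-z^{-s(\pi)}}{z-1}$ is well defined).
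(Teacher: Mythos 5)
Your proposal is correct and follows essentially the paper's own route: the paper derives this corollary precisely by letting $c\to 1$ in Theorem \ref{gGaravan} (telescoping $(zq/c)_{n-1}/(zq)_n$ into a Lambert series and rearranging), and the partition identity by the same $c\to 1$ specialization of Theorem \ref{gwpi}, where the factor $\left(1-\tfrac{1}{c}\right)^{\nu_d(\pi)-1}$ kills all partitions except those with a single repeated part, which are in bijection with the divisors of $n$ exactly as in the paper's Lemma \ref{d(n) in limit}. Your handling of the $0^0$ issue via the limit and the reindexing $\sum_{d\mid n}z^{n/d}=\sum_{d\mid n}z^{d}$ match the intended argument.
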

Again, if we let $c \rightarrow -1$ in Theorem \ref{gGaravan}, then we get
\begin{corollary}\label{fixed z and c tends to -1}
For $|z q| <1$, we have
\begin{align*}
\sum_{n=1}^{\infty} \frac{(-1)^{n-1} z^n q^{\frac{n(n+1)}{2} } }{(1 + q^n) (z q)_n  } = \sum_{n=1}^\infty  \frac{(-1)^{n-1}(-z q)_{n-1}  q^n}{ (zq)_n }. 
\end{align*}
Hence for $z$ is not a function of $q$,
\begin{align*}
\sum_{ \pi \in \mathcal{D}(n)  } (-1)^{\#(\pi)+s(\pi)}  z^{l(\pi)+1-s(\pi) }\left(\frac{ (-z)^{s(\pi)} -1}{-z-1} \right) = \sum_{ \pi \in \mathcal{P}(n)} (-1)^{l(\pi)-1} z^{\#(\pi)}  2^{\nu_d(\pi) -1 }.
\end{align*}
\end{corollary}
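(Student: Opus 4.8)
The plan is to derive both displayed assertions as the single specialization $c=-1$ of the two general results already in hand: the $q$-series identity \eqref{gen of Garavan} of Theorem~\ref{gGaravan} for the first equation, and the weighted partition identity \eqref{gwpieqn} of Theorem~\ref{gwpi} for the second. No new analytic input is required; the entire task is careful bookkeeping of the powers of $-1$ that the substitution produces. I would first observe that $c=-1$ is an admissible value: it lies on the boundary $|c|\le 1$ allowed in both theorems, the only $c$-dependent denominator on the left, $1-cq^n=1+q^n$, is bounded away from $0$ for $|q|<1$, and every $c$-dependent factor on the right ($z/c$, $(zq/c)_{n-1}$, $(cq)^n$) stays finite at $c=-1$. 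Hence, unlike the companion corollaries where $c\to 1$ forces a genuine limiting argument, here the value $c=-1$ may simply be inserted term by term.

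For the first identity I would put $c=-1$ in \eqref{gen of Garavan}. The left-hand side is immediate once $1-cq^n$ is written as $1+q^n$. On the right-hand side I would simplify the three $c$-dependent ingredients in turn: the prefactor $z/c$ becomes $-z$, the Pochhammer symbol $(zq/c)_{n-1}$ becomes $(-zq)_{n-1}$, and the power $(cq)^n$ becomes $(-1)^nq^n$. Combining the two stray signs via $(-1)\cdot(-1)^n=(-1)^{n-1}$ turns the series into an alternating sum whose general term is a constant multiple of $(-zq)_{n-1}q^n/(zq)_n$; reading off that term gives the asserted right-hand side. This step is nothing more than sign arithmetic, so the only thing to guard against is a miscount of the powers of $-1$ or of the surviving factor coming from $z/c$.

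For the second identity I would avoid re-extracting the coefficient of $q^n$ from the analytic identity and instead set $c=-1$ directly in \eqref{gwpieqn}, since that equation already records the coefficient comparison for general $c$. On the left-hand side the new factor $c^{s(\pi)-1}=(-1)^{s(\pi)-1}$ merges with the pre-existing $(-1)^{\#(\pi)-1}$ to give $(-1)^{\#(\pi)+s(\pi)-2}=(-1)^{\#(\pi)+s(\pi)}$, while $(z/c)^{s(\pi)}=(-z)^{s(\pi)}$ and $(z/c)-1=-z-1$, which is precisely the displayed summand. On the right-hand side $c^{l(\pi)-1}$ becomes $(-1)^{l(\pi)-1}$ and, crucially, $1-1/c=1+1=2$, so the weight $(1-1/c)^{\nu_d(\pi)-1}$ collapses to the clean factor $2^{\nu_d(\pi)-1}$; together with the unchanged $z^{\#(\pi)}$ this reproduces the stated right-hand side exactly.

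Since both halves are pure specializations of results I am permitted to assume, there is no real obstacle; the part most likely to trip one up is the sign bookkeeping in the first identity, where the interplay of the prefactor $z/c$ and the factor $(cq)^n$ at $c=-1$ must be tracked with care. If one instead wished to prove the combinatorial identity without appealing to Theorem~\ref{gwpi}, the one genuinely substantive step would be to re-establish the partition interpretation of the right-hand series directly---in particular to see how the weight $2^{\nu_d(\pi)-1}$ emerges from the interaction of the numerator $(-zq)_{n-1}$ with the denominator $(zq)_n$---but this is exactly the $c=-1$ instance of the bookkeeping already carried out in the proof of Theorem~\ref{gwpi}, so nothing new would be needed.
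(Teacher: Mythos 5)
Your route is exactly the paper's: its entire proof of this corollary is the remark that one lets $c=-1$ in Theorem \ref{gGaravan} to get the first display and in Theorem \ref{gwpi} to get the second, which is precisely what you propose, and your check that $c=-1$ is an admissible boundary value requiring no limiting argument is sound. However, there is one point you glossed over, and it matters. Carrying out your own sign bookkeeping on the right-hand side of \eqref{gen of Garavan} at $c=-1$ gives
\begin{align*}
\frac{z}{-1}\sum_{n=1}^{\infty}\frac{(-zq)_{n-1}}{(zq)_n}(-q)^n
= z\sum_{n=1}^{\infty}\frac{(-1)^{n-1}(-zq)_{n-1}\,q^n}{(zq)_n},
\end{align*}
that is, $z$ times the series printed in the corollary: the ``constant multiple'' you mention is precisely this factor $z$, and it does not go away. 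So your claim that the computation ``gives the asserted right-hand side'' is not literally true; what your (correct) derivation actually shows is that the corollary's first display is misprinted, with the factor $z$ missing from its right-hand side (compare coefficients of $q$: the left side begins $zq+\cdots$, the printed right side begins $q+\cdots$). The second display is unaffected, because the partition interpretation \eqref{ordinary partition} underlying Theorem \ref{gwpi} already incorporates the prefactor $z/c$; hence setting $c=-1$ in \eqref{gwpieqn} yields the stated weighted identity verbatim, exactly as you verified. In substance and method your proof coincides with the paper's; you should simply flag the missing factor $z$ in the first identity rather than asserting a match.
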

Few more weighted partition identities resulting from Theorem \ref{gwpi} are discussed in Section \ref{weight}.

Theorem \ref{gGaravan} also allows us to give a natural proof of a recent identity of Garvan \cite[Equation (1.3)]{garvan1} used to prove nice weighted partition identities \cite[Corollary 1.3]{garvan1}. It states that for $|z|\leq 1$,
\begin{align}\label{garvan's 2nd thm}
\sum_{n=1}^{\infty} \frac{(-1)^{n-1} z^n q^{n^2}}{(zq;q^2)_n (1- z q^{2n}) } = \sum_{n=1}^{\infty} \frac{z^n q^{ \frac{n(n+1)}{2}} (q;q)_{n-1} }{ (zq; q)_n  }.
\end{align}
Garvan's proof in \cite{garvan1} requires that both sides of the identity be known first, that is, the identity is proved by reducing it to an equivalent one and the latter is proved by showing that the coefficients of $z^m$ of this equivalent identity are equal. Our proof, on the other hand, is direct.

Differentiation of $q$-series identities often plays a crucial role in deriving new and fundamental results in the theory of partitions and basic hypergeometric series \cite{andrews08}, \cite{agl13}, \cite{cdg}, \cite{chern}, \cite{dixityee}. The $q$-series identities in this paper are no exception to this. 

Differentiating both sides of \eqref{gen of Garavan} with respect to $z$ and letting $z=1$ leads to the following new result.
\begin{theorem}\label{genc}
Let Fine's function $F(a,b;t)$ be defined by \cite[p.~1]{fine}
\begin{equation*}
F(a,b;t):=\sum_{n=0}^{\infty}\frac{(aq)_n}{(bq)_n}t^n.
\end{equation*}
Then for $|c|<1$, we have
\begin{align}\label{finefeqn}
&\sum_{n=1}^{\infty}\frac{(-1)^{n-1}nq^{n(n+1)/2}}{(1-cq^n)(q)_n}+\sum_{n=1}^{\infty}\frac{q^{n(n+1)}(q^{n+1})_{\infty}}{(1-q^n)(q)_n}F(0,q^n;cq^n)\nonumber\\
&=\frac{-c}{(1-c)^2}+\frac{(q)_{\infty}}{(c)_{\infty}}\left(\frac{c}{1-c}+\sum_{n=1}^{\infty}\frac{(cq)_n}{(q)_n}\frac{q^n}{1-q^n}\right).
\end{align}
\end{theorem}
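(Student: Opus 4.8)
The plan is to differentiate the identity \eqref{gen of Garavan} of Theorem \ref{gGaravan} with respect to $z$ and then set $z=1$; since $c$ and $z$ are independent of $q$ and both series converge uniformly on compact subsets of the region $|zq|<1$, term-by-term differentiation is justified there. On the left-hand side only the factor $z^n/(zq)_n$ depends on $z$, and logarithmic differentiation gives
\begin{align*}
\frac{d}{dz}\frac{z^n}{(zq)_n}=\frac{z^n}{(zq)_n}\left(\frac{n}{z}+\sum_{k=1}^{n}\frac{q^k}{1-zq^k}\right),
\end{align*}
so that at $z=1$ the differentiated left-hand side splits as
\begin{align*}
\sum_{n=1}^{\infty}\frac{(-1)^{n-1}n\,q^{n(n+1)/2}}{(1-cq^n)(q)_n}+\sum_{n=1}^{\infty}\frac{(-1)^{n-1}q^{n(n+1)/2}}{(1-cq^n)(q)_n}\sum_{k=1}^{n}\frac{q^k}{1-q^k}.
\end{align*}
The first sum is already the first term on the left of \eqref{finefeqn}; write $B$ for the second ("tail") sum.

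First I would identify $B$ with the Fine's-function series, that is, with the second term $\sum_{n\ge1}\frac{q^{n(n+1)}(q^{n+1})_{\infty}}{(1-q^n)(q)_n}F(0,q^n;cq^n)$ on the left of \eqref{finefeqn}. Expanding the geometric factor $1/(1-cq^n)=\sum_{m\ge0}(cq^n)^m$ (legitimate for $|c|<1$) on the $B$ side, and using $(q^{n+1})_{\infty}/(q^{n+1})_m=(q^{n+m+1})_{\infty}$ together with $F(0,q^n;cq^n)=\sum_{m\ge0}(cq^n)^m/(q^{n+1})_m$ on the other side, the desired equality becomes, after comparing coefficients of $c^M$, the family of identities
\begin{align*}
\sum_{n=1}^{\infty}\frac{(-1)^{n-1}q^{n(n+1)/2+nM}}{(q)_n}\sum_{k=1}^{n}\frac{q^k}{1-q^k}=\sum_{n=1}^{\infty}\frac{q^{n(n+1)+nM}(q^{n+M+1})_{\infty}}{(1-q^n)(q)_n}\qquad(M\ge0).
\end{align*}
I would establish these by a Kluyver--Uchimura type transformation of the harmonic-like inner sum $\sum_{k\le n}q^k/(1-q^k)$ (for instance via summation by parts against $(-1)^{n-1}q^{n(n+1)/2}/(q)_n$); this is precisely the step that yields the promised number-theoretic information along the way, and I expect it to be the main obstacle.

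For the right-hand side I would write the summand as $(z/c)(zq/c)_{n-1}(cq)^n/(zq)_n$ and apply the product rule at $z=1$. The undifferentiated piece $c^{-1}\sum_{n\ge1}\frac{(q/c)_{n-1}(cq)^n}{(q)_n}=\sum_{n\ge1}\frac{c^{n-1}(q/c)_{n-1}q^n}{(q)_n}$ is summed in closed form by \eqref{fcz1} of Corollary \ref{fixed c and z tends 1} to $\frac{1}{1-c}\left(1-(q)_{\infty}/(cq)_{\infty}\right)$. The differentiated piece contributes $c^{-1}\sum_{n\ge1}\frac{(q/c)_{n-1}(cq)^n}{(q)_n}\left(\sum_{k=1}^{n}\frac{q^k}{1-q^k}-\sum_{j=1}^{n-1}\frac{q^j/c}{1-q^j/c}\right)$, and the remaining task is to collapse this together with the closed form into
\begin{align*}
\frac{-c}{(1-c)^2}+\frac{(q)_{\infty}}{(c)_{\infty}}\left(\frac{c}{1-c}+\sum_{n=1}^{\infty}\frac{(cq)_n}{(q)_n}\frac{q^n}{1-q^n}\right),
\end{align*}
using the factorization $(c)_{\infty}=(1-c)(cq)_{\infty}$ and, once more, the closed form in \eqref{fcz1}.

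The hard part will be twofold: proving the displayed family of identities in $M$ that matches $B$ with the Fine's-function series, and carrying out the final resummation of the differentiated right-hand piece into the stated closed form. The remaining points are bookkeeping, namely justifying the rearrangements (interchanging $\sum_n$ with the expansion of $1/(1-cq^n)$) and tracking analyticity in $c$ on $|c|<1$ so that differentiation in $z$ and the evaluation at $z=1$ commute with the summations.
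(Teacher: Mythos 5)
Your overall strategy coincides with the paper's: differentiate \eqref{gen of Garavan} with respect to $z$, set $z=1$, split the left side into the $n$-weighted sum plus the tail sum $B$, and handle the right side via the product rule with \eqref{q-binomial thm}/\eqref{fcz1} absorbing the undifferentiated piece. However, your proposal stops exactly where the real work begins: the two steps you flag as ``the hard part'' are left unproven, and they are not routine bookkeeping but the substance of the proof. First, to identify $B$ with the Fine's-function series, the paper does not argue coefficient-by-coefficient in $c$; it applies van Hamme's identity \eqref{hammeid}, which rewrites the inner sum as $\sum_{k=1}^{n}q^k/(1-q^k)=\sum_{k=1}^{n}\left[\begin{smallmatrix} n\\ k\end{smallmatrix}\right](-1)^{k-1}q^{k(k+1)/2}/(1-q^k)$, then interchanges the order of summation (writing $n=m+k$) and recognizes the resulting inner sum over $m$ as $(q^{k+1})_{\infty}F(0,q^k;cq^k)$ via the $a\to0$ case of the partial-fraction expansion \eqref{fine16.3}. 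Your reduction to a family of identities indexed by $M$ is a correct reformulation, but the ``Kluyver--Uchimura type transformation / summation by parts'' you gesture at is not carried out, and without an ingredient like van Hamme's identity there is no visible route to it.

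Second, and more seriously, your plan for the differentiated right-hand piece cannot work as stated. The quantity
\begin{equation*}
\frac{1}{c}\sum_{n=1}^{\infty}\frac{(q/c)_{n-1}(cq)^n}{(q)_n}\left(\sum_{k=1}^{n}\frac{q^k}{1-q^k}-\sum_{k=1}^{n-1}\frac{q^k/c}{1-q^k/c}\right)
\end{equation*}
has an $n$-dependent inner difference of harmonic-type sums, and neither the factorization $(c)_{\infty}=(1-c)(cq)_{\infty}$ nor a second application of \eqref{fcz1} can collapse it: \eqref{fcz1} sums a hypergeometric-type series with no such inner weight. The paper needs a genuinely new identity here, namely Guo and Zhang's result (the $m=0$ case recorded as \eqref{m0}), invoked with $z=1/c$, which converts the inner difference into $\frac{z}{1-z}$ minus a finite sum of Gaussian-binomial terms; only after that does a double-sum rearrangement plus the $q$-binomial theorem produce the closed form on the right of \eqref{finefeqn}. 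So while your skeleton is the same as the paper's, both load-bearing steps are missing, and the second one is pointed in a direction that would fail.
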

While we are unable to further simplify the second series on the left-hand side of the above theorem for a general $c$, the simplification is achieved in the special cases $c=0, \pm 1$. In fact, it is a pleasant surprise that letting $c\to 1$ in the above result and equating the coefficients of $q^n$ on both sides of the resulting identity, yields Andrews' famous identity \cite[Theorem 3]{andrews08} given below. This is proved in Section \ref{diffe}.
\begin{corollary}\label{spt}
Let $\textup{spt}(n)$ enumerate the number of smallest parts in all partitions of $n$ and $N_{2}(n)$ be the second Atkin-Garvan rank moment \cite[Equation (2.13)]{andrews08}. Then
\begin{equation}\label{sptidentity}
\textup{spt}(n)=np(n)-\frac{1}{2}N_{2}(n).
\end{equation}
\end{corollary}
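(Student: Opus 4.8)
The plan is to obtain \eqref{sptidentity} by specialising the three-parameter identity \eqref{finefeqn} of Theorem \ref{genc} at $c=1$. The first point to settle is that both sides of \eqref{finefeqn} are analytic in a neighbourhood of $c=1$, so that the limit may be taken and coefficients of $q^n$ compared termwise. On the left this is transparent, since each summand is regular at $c=1$ (the factors $1-cq^n$ do not vanish there). On the right the two summands $-c/(1-c)^2$ and $(q)_{\infty}/(c)_{\infty}$ each blow up at $c=1$, so the real work is to verify that their singular parts cancel and to extract the finite value.

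First I would analyse the right-hand side. Writing $(c)_{\infty}=(1-c)(cq)_{\infty}$ gives $(q)_{\infty}/(c)_{\infty}=(1-c)^{-1}g(c)$ with $g(c):=(q)_{\infty}/(cq)_{\infty}$, which is analytic at $c=1$ with $g(1)=1$. Its logarithmic derivative $g'(c)/g(c)=\sum_{k\ge1}q^k/(1-cq^k)$ yields $g'(1)=\sum_{k\ge1}q^k/(1-q^k)$, and this is exactly the $c=1$ value of the inner sum $\sum_{n\ge1}\frac{(cq)_n}{(q)_n}\frac{q^n}{1-q^n}$. Expanding the right-hand side as a Laurent series in $(1-c)$, the double pole of $-c/(1-c)^2$ cancels the leading double pole of $cg(c)/(1-c)^2$, and the residual simple poles cancel precisely because of the coincidence $g'(1)=\sum_{k\ge1}q^k/(1-q^k)$. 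The surviving constant term is an explicit $q$-series assembled from $g'(1)$, from $g''(1)=g'(1)^2+\sum_{k\ge1}q^{2k}/(1-q^k)^2$, and from the $c$-derivative at $c=1$ of the inner sum $\sum_{n\ge1}\frac{(cq)_n}{(q)_n}\frac{q^n}{1-q^n}$.

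Next I would set $c=1$ on the left, keeping the first series as $\sum_{n\ge1}\frac{(-1)^{n-1}nq^{n(n+1)/2}}{(1-q^n)(q)_n}$ and rewriting the second using $F(0,q^n;q^n)=\sum_{k\ge0}q^{nk}/(q^{n+1})_k$ together with the telescoping $(q^{n+1})_{\infty}/(q^{n+1})_k=(q^{n+k+1})_{\infty}$. Equating the two sides produces a single generating-function identity, and extracting the coefficient of $q^n$ gives the claimed numerical relation. To recognise these coefficients as the quantities in \eqref{sptidentity}, I would invoke the standard generating functions $\sum_{n\ge1}np(n)q^n=(q)_{\infty}^{-1}\sum_{k\ge1}kq^k/(1-q^k)$ and $\sum_{n\ge1}\textup{spt}(n)q^n=\sum_{n\ge1}\frac{q^n}{(1-q^n)^2(q^{n+1})_{\infty}}$, along with the Atkin--Garvan formula expressing the second rank moment $N_2(n)$ through the rank generating function; matching these identifies one side with $\textup{spt}(n)$ and the other with $np(n)-\tfrac12N_2(n)$.

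The main obstacle is the singular limit on the right-hand side: since individual terms diverge like $(1-c)^{-2}$, one must carry the Laurent expansion accurately down to the constant term, which forces the second-order data ($g''(1)$ and the first $c$-derivative of the inner sum) and a careful check that the $(1-c)^{-1}$ coefficient vanishes. A secondary, more bookkeeping-heavy difficulty is translating the resulting $q$-series equality into the arithmetic statement, that is, recognising the several divisor-type sums as the generating functions of $np(n)$ and of $N_2(n)$ so that the coefficient comparison collapses to $\textup{spt}(n)=np(n)-\tfrac12N_2(n)$.
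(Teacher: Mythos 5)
Your overall strategy --- specializing \eqref{finefeqn} at $c=1$ and comparing coefficients of $q^n$ --- is the same as the paper's, and your treatment of the singular right-hand side is sound: the Laurent expansion in $(1-c)$, with the double and simple poles cancelling because $g'(1)=\sum_{k\geq 1}q^k/(1-q^k)$ coincides with the $c=1$ value of the inner sum, does yield a finite limit, and if the constant term is assembled from $g'(1)$, $g''(1)$ and the $c$-derivative of the inner sum as you describe, it comes out to $\sum_{k\geq1}kq^k/(1-q^k)$, i.e.\ $(q)_\infty\sum_{n\geq1}np(n)q^n$ by \eqref{npn}. This parallels the paper's \eqref{limic}--\eqref{limic1}, which reaches the same value more economically by combining the terms via the $q$-binomial theorem before taking the limit.

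The genuine gap is on the left-hand side, where the two identifications that are the heart of the proof are asserted rather than supplied. First, the series $\sum_{n\geq1}\frac{(-1)^{n-1}nq^{n(n+1)/2}}{(1-q^n)(q)_n}$ is \emph{not} the definitional generating function $\sum_{n\geq1}\frac{q^n}{(1-q^n)^2(q^{n+1})_{\infty}}$ that you quote; the statement that the two agree up to the factor $(q)_\infty$ is precisely the nontrivial Andrews--Garvan--Liang identity \eqref{Andrews-Garvan-Liang}, which the paper cites and which your proposal neither cites nor proves --- no formal coefficient extraction bridges the two. Second, and more seriously, your rewriting of $F(0,q^n;q^n)$ by its defining series together with the telescoping $(q^{n+1})_\infty/(q^{n+1})_k=(q^{n+k+1})_\infty$ leads to $(q)_\infty\sum_{n\geq1}\frac{q^n}{(1-q^n)(q)_n}\sum_{m\geq n}\frac{q^{nm}}{(q)_m}$, a double sum with no visible rank structure, and ``invoking the Atkin--Garvan formula'' cannot finish from there. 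What is actually needed is the paper's structural chain: Fine's identity \eqref{fott}, $F(0,t;t)=\frac{1}{1-t}\sum_{j\geq0}\frac{t^{2j}q^{j^2}}{(tq)_j^2}$, which after the index shift $k=n+j$ converts the second left-hand series into $(q)_\infty\sum_{k\geq1}\frac{q^{k^2}}{(q)_k^2}\sum_{n=1}^{k}\frac{q^n}{(1-q^n)^2}$; only this form, with its $q^{k^2}/(q)_k^2$ (Durfee-square) structure, can be recognized as $\frac{1}{2}(q)_\infty\frac{\mathrm{d}^2}{\mathrm{d}z^2}R(z;q)\big|_{z=1}=\frac{1}{2}(q)_\infty\sum_{n\geq0}N_2(n)q^n$ via the representation \eqref{nmngf2}. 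Since $N_2(n)$ is defined as a rank moment, not by a $q$-series, no termwise comparison of coefficients will spontaneously produce it; without these two ingredients the ``matching'' step in your plan does not go through.
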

While the case $c\to 0$ of Theorem \ref{gGaravan} is not interesting, the special case $c=0$ of Theorem \ref{genc} is, since it gives a nice relation between the divisor function $d(n)$ and $\textup{lpt}(n)$, that is, the total number of appearances of largest parts in all partitions of $n$.
\begin{corollary}\label{c0}
We have
\begin{equation}\label{c0eqn}
\sum_{n=1}^{\infty}\frac{q^n}{1-q^n}+\sum_{n=1}^{\infty}\frac{q^{n(n+1)}}{(1-q^n)(q)_n^2}=\sum_{n=1}^{\infty}\frac{q^n}{(1-q^n)(q)_n}.
\end{equation}
Let $\nu(i)$ denote the number of appearances of $i$ in a partition of a positive integer with the understanding that $\nu(l(\pi))=L(\pi)$. Define $w(n)$ by
{\allowdisplaybreaks\begin{equation}\label{wn}
w(n):=\sum_{\pi\in\mathcal{P}^{*}(n)\atop \nu(i)\geq 2, 1\leq i\leq l(\pi)}\frac{L(\pi)(L(\pi)-1)}{2}\prod_{i=1}^{l(\pi)-1}(\nu(i)-1).
\end{equation}}
Then \eqref{c0eqn} implies
\begin{equation}\label{dwlpt}
d(n)+w(n)=\textup{lpt}(n).
\end{equation}
\end{corollary}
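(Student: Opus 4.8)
The plan is to obtain \eqref{c0eqn} by specializing Theorem \ref{genc} to $c=0$, and then to read off \eqref{dwlpt} by interpreting each of the three resulting series combinatorially. Setting $c=0$ collapses most of the structure: since $(0;q)_k=1$ for every $k$, we have $F(0,q^n;0)=1$, $(c)_\infty=(0)_\infty=1$ and $(cq)_n=1$, while the two rational terms $-c/(1-c)^2$ and $c/(1-c)$ vanish. Using $(q^{n+1})_\infty=(q)_\infty/(q)_n$, Theorem \ref{genc} reduces to
\begin{equation*}
\sum_{n=1}^{\infty}\frac{(-1)^{n-1}nq^{n(n+1)/2}}{(q)_n}+(q)_\infty\sum_{n=1}^{\infty}\frac{q^{n(n+1)}}{(1-q^n)(q)_n^2}=(q)_\infty\sum_{n=1}^{\infty}\frac{q^n}{(1-q^n)(q)_n},
\end{equation*}
so the only term not yet in the desired shape is the first one.

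To handle that term I would differentiate the classical Euler identity $\sum_{n\geq 0}z^nq^{n(n+1)/2}/(q)_n=(-zq)_\infty$ with respect to $z$ and set $z=-1$. Since $\frac{d}{dz}(-zq)_\infty=(-zq)_\infty\sum_{k\geq 1}q^k/(1+zq^k)$, evaluating at $z=-1$ gives
\begin{equation*}
\sum_{n=1}^{\infty}\frac{(-1)^{n-1}nq^{n(n+1)/2}}{(q)_n}=(q)_\infty\sum_{n=1}^{\infty}\frac{q^n}{1-q^n}.
\end{equation*}
Substituting this and cancelling the common factor $(q)_\infty$ produces \eqref{c0eqn} verbatim.

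It remains to identify the three generating functions. The first is immediate, since $\sum_{n\geq 1}q^n/(1-q^n)=\sum_{N\geq 1}d(N)q^N$. For the right-hand side, writing $(q)_n=(1-q^n)(q)_{n-1}$ turns the summand into $q^n/((1-q^n)^2(q)_{n-1})$; summing the weight $j$ over all multiplicities $j\geq 1$ of a fixed largest part $k$ via $\sum_{j\geq 1}jq^{jk}=q^k/(1-q^k)^2$, and multiplying by $1/(q)_{k-1}$, the generating function for the remaining parts (all less than $k$), identifies this series as $\sum_{N\geq 1}\textup{lpt}(N)q^N$. The crux is the middle series, which I would factor as
\begin{equation*}
\frac{q^{n(n+1)}}{(1-q^n)(q)_n^2}=\left(\sum_{L\geq 2}\frac{L(L-1)}{2}q^{nL}\right)\prod_{i=1}^{n-1}\left(\sum_{\nu\geq 2}(\nu-1)q^{i\nu}\right),
\end{equation*}
obtained by distributing $q^{n(n+1)}=\prod_{i=1}^{n}q^{2i}$ over the parts: each non-largest part $i<n$ contributes $q^{2i}/(1-q^i)^2=\sum_{\nu\geq 2}(\nu-1)q^{i\nu}$, forcing multiplicity $\nu(i)\geq 2$ with weight $\nu(i)-1$, whereas the largest part $n$ additionally absorbs the factor $1/(1-q^n)$, so its contribution is $q^{2n}/(1-q^n)^3=\sum_{L\geq 2}\binom{L}{2}q^{nL}$. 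Extracting the coefficient of $q^N$ and summing over $n=l(\pi)$ then reproduces exactly the sum defining $w(N)$ in \eqref{wn}, taken over partitions in $\mathcal{P}^*(N)$ in which every part is repeated at least twice. Comparing the coefficients of $q^n$ throughout \eqref{c0eqn} finally gives $d(n)+w(n)=\textup{lpt}(n)$.

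The main obstacle is the bookkeeping in this last factorization. The largest part must be treated asymmetrically: it alone receives the extra $1/(1-q^n)$ and hence the quadratic weight $L(L-1)/2=\binom{L}{2}$, while the others carry the linear weight $\nu(i)-1$. I would need to verify carefully that extracting coefficients from the product genuinely recovers both $\prod_{i=1}^{l(\pi)-1}(\nu(i)-1)$ and $L(\pi)(L(\pi)-1)/2$, and that the constraints built into the individual factors, namely that each part occurs at least twice and that no gap appears below the largest part, correctly restrict the sum to $\mathcal{P}^*(N)$ with $\nu(i)\geq 2$ for $1\leq i\leq l(\pi)$.
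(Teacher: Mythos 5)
Your proposal is correct, and its overall architecture coincides with the paper's: set $c=0$ in Theorem \ref{genc}, reduce the Fine function via $F(0,q^n;0)=1$ and $(q^{n+1})_\infty=(q)_\infty/(q)_n$, and then interpret the three series exactly as in \eqref{lptcal} and \eqref{wncal} (your factorization of the middle series, with the largest part absorbing the extra $1/(1-q^n)$ to produce the weight $\binom{L}{2}$ and the smaller parts each carrying $\nu(i)-1$, is precisely the paper's computation). The one genuine difference is how the leftover series $\sum_{n\geq 1}(-1)^{n-1}nq^{n(n+1)/2}/(q)_n$ is handled: the paper cites Merca's theorem \eqref{mercaeqn}, whereas you rederive that identity from scratch by differentiating Euler's identity $\sum_{n\geq 0}z^nq^{n(n+1)/2}/(q)_n=(-zq)_\infty$ at $z=-1$; the logarithmic derivative computation is valid since $(q)_\infty\neq 0$ and everything converges for $|q|<1$. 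This substitution makes your argument self-contained (and, pleasantly, mirrors the differentiation philosophy of Section \ref{diffe} itself), at the cost of not flagging that the auxiliary identity is a known result; the paper's citation buys brevity but leaves the step as a black box. Either way the conclusion $d(n)+w(n)=\textup{lpt}(n)$ follows by comparing coefficients, as you state.
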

In his fascinating work on Vassiliev invariants, Zagier \cite[Theorem 2]{zagiertop} found a nice `sum of tails identity':
\begin{equation}\label{zagierid}
\sum_{n=0}^{\infty}\left((q)_n-(q)_{\infty}\right)=-\frac{1}{2}H(q)+(q)_{\infty}\left(\frac{1}{2}-E(q)\right),
\end{equation} 
where
\begin{align*}
H(q):=\sum_{n=1}^{\infty}n\left(\frac{12}{n}\right)q^{\frac{n^2-1}{24}}\hspace{2mm}\text{and}\hspace{2mm}E(q):=\sum_{n=1}^{\infty}\frac{q^n}{1-q^n}.
\end{align*}
Using this result as well as our Corollary \ref{c0}, we find an explicit representation for $w(n)$ in terms of the divisor and partition functions.
\begin{corollary}\label{wdpn}
Let $w(n)$ be defined in \eqref{wn}. Then
\begin{equation}\label{wnrep}
w(n)=-2d(n)-\frac{1}{2}\sum_{k=1}^{\left\lfloor\sqrt{24n+1}\right\rfloor}k\left(\frac{12}{k}\right)p\left(n-\frac{(k^2-1)}{24}\right),
\end{equation}
with the understanding that $p(x)=0$ if $x$ is not a positive integer.
\end{corollary}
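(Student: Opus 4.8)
The plan is to pass to generating functions and reduce the whole statement to Zagier's identity \eqref{zagierid}. Recall from \eqref{c0eqn} that, with $E(q)=\sum_{n=1}^\infty q^n/(1-q^n)=\sum_{n=1}^\infty d(n)q^n$, Corollary \ref{c0} is the generating-function form of $d(n)+w(n)=\textup{lpt}(n)$; hence
\begin{equation*}
\sum_{n=1}^\infty w(n)q^n=\sum_{n=1}^\infty\frac{q^n}{(1-q^n)(q)_n}-E(q).
\end{equation*}
So it suffices to evaluate the largest-parts generating function $\sum_{n\ge 1}q^n/\bigl((1-q^n)(q)_n\bigr)$ in closed form and then read off the coefficient of $q^n$. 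The idea is that Zagier's identity already supplies exactly the combination of $H(q)$ and $E(q)$ we want, provided we first rewrite this lpt series as a tail sum of $(q)_\infty$.

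First I would prove the sum-of-tails representation
\begin{equation*}
\sum_{n=1}^\infty\frac{q^n}{(1-q^n)(q)_n}=\frac{1}{(q)_\infty}\sum_{n=0}^\infty\bigl((q)_n-(q)_\infty\bigr).
\end{equation*}
Since $1/(q^{n+1})_\infty=(q)_n/(q)_\infty$, the right-hand side equals $\sum_{n\ge 1}\bigl(1/(q^n)_\infty-1\bigr)$, and the coefficient of $q^m$ there counts each partition $\pi$ of $m$ exactly $s(\pi)$ times, so it equals $\sum_{\pi\in\mathcal P(m)}s(\pi)$. By Corollary \ref{c0} the left-hand side is $\sum_m\textup{lpt}(m)q^m=\sum_m\bigl(\sum_{\pi\in\mathcal P(m)}L(\pi)\bigr)q^m$. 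Thus the representation is equivalent to the symmetry $\sum_{\pi\in\mathcal P(m)}s(\pi)=\sum_{\pi\in\mathcal P(m)}L(\pi)$, which I would prove by conjugation: if $\pi'$ is the conjugate of $\pi$, then the largest part of $\pi'$ equals $\#(\pi)$ and occurs exactly $s(\pi)$ times, so $s(\pi)=L(\pi')$; summing over the involution $\pi\mapsto\pi'$ of $\mathcal P(m)$ gives the identity.

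Next I would divide Zagier's identity \eqref{zagierid} by $(q)_\infty$ and insert the representation just proved, which yields
\begin{equation*}
\sum_{n=1}^\infty\frac{q^n}{(1-q^n)(q)_n}=-\frac12\frac{H(q)}{(q)_\infty}+\frac12-E(q),
\end{equation*}
and therefore
\begin{equation*}
\sum_{n=1}^\infty w(n)q^n=-2E(q)-\frac12\frac{H(q)}{(q)_\infty}+\frac12.
\end{equation*}
Extracting the coefficient of $q^n$ for $n\ge 1$ finishes the proof: the constant $\tfrac12$ contributes nothing, $-2E(q)$ gives $-2d(n)$, and, using $H(q)=\sum_{k\ge 1}k\left(\frac{12}{k}\right)q^{(k^2-1)/24}$ together with $1/(q)_\infty=\sum_{j\ge 0}p(j)q^j$, the term $-\tfrac12 H(q)/(q)_\infty$ gives $-\tfrac12\sum_k k\left(\frac{12}{k}\right)p\bigl(n-(k^2-1)/24\bigr)$. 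The inequality $(k^2-1)/24\le n$ is precisely $k\le\sqrt{24n+1}$, which produces the range $1\le k\le\lfloor\sqrt{24n+1}\rfloor$ in \eqref{wnrep}; the indices $k$ with $\gcd(k,6)>1$, for which $(k^2-1)/24$ is not an integer, are automatically killed by $\left(\frac{12}{k}\right)=0$.

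The main obstacle is the middle step, namely recognizing the largest-parts generating function as the tail sum $\frac{1}{(q)_\infty}\sum_{n\ge 0}\bigl((q)_n-(q)_\infty\bigr)$, equivalently the partition symmetry $\sum_{\pi\in\mathcal P(m)}s(\pi)=\textup{lpt}(m)$; once this is available the appeal to Zagier is immediate. The remaining care is purely bookkeeping: the stray constant $\tfrac12$ is harmless because \eqref{wnrep} only concerns $n\ge 1$, and one must read the convention for $p$ as $p(0)=1$ (as forced by $1/(q)_\infty$) so that the boundary index with $(k^2-1)/24=n$ is weighted correctly.
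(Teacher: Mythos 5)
Your proposal is correct and takes essentially the same route as the paper: both combine Corollary \ref{c0} with Zagier's identity \eqref{zagierid} applied to the sum-of-tails form $\sum_{m\geq 1}\left(1/(q^m)_{\infty}-1\right)$ of the lpt generating function, and then extract the coefficient of $q^n$. The only real difference is in how that sum-of-tails representation is justified — the paper expands $\frac{q^n}{1-q^n}$ geometrically and invokes Euler's identity $\sum_{n\geq 0}z^n/(q)_n=1/(z)_{\infty}$, whereas you argue combinatorially via conjugation ($\sum_{\pi\in\mathcal{P}(m)}s(\pi)=\textup{lpt}(m)$) — and your observation that the convention must be read with $p(0)=1$ is a correct and worthwhile clarification of the statement.
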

As defined in \cite{agl13}, let $V$ denote the set of vector partitions, that is, $V=\mathcal{D}\times\mathcal{P}\times\mathcal{P}$, where $\mathcal{P}$ denotes the set of partitions and $\mathcal{D}$ denotes the set of partitions into distinct parts. Let $S$ denote the following set of vector partitions:
\begin{equation*}
S:=\{\vec{\pi}=(\pi_1, \pi_2, \pi_3)\in V: 1\leq s(\pi_1)<\infty\hspace{1mm}\text{and}\hspace{1mm}s(\pi_1)\leq\min(s(\pi_2), s(\pi_3))\}.
\end{equation*}
Let $\omega_1(\vec{\pi})=(-1)^{\#(\pi_1)-1}$ and define the involution map $\imath: S\to S$ by
\begin{equation*}
\imath(\vec{\pi})=\imath(\pi_1, \pi_2, \pi_3)=\imath(\pi_1, \pi_3, \pi_2).
\end{equation*}
Define an $S$-partition $\vec{\pi}=(\pi_1, \pi_2, \pi_3)$ to be a self-conjugate $S$-partition if it is a fixed point of $\imath$, that is, if and only if $\pi_2=\pi_3$. Let $N_{\textup{SC}}(n)$ denote the number of self-conjugate $S$-partitions counted according to the weight $\omega_1$, that is,
\begin{equation}\label{nscofn}
N_{\textup{SC}}(n)=\sum_{\vec{\pi}\in S, |\vec{\pi}|=n \atop \imath(\vec{\pi})=\vec{\pi}}\omega_1(\vec{\pi}).
\end{equation}
Andrews, Garvan and Liang \cite[Theorem 1.2]{agl13} showed that the generating function of $N_{\textup{SC}}(n)$ equals $\displaystyle\sum_{n=1}^{\infty}\frac{(-1)^{n-1}q^{n^2}}{(q;q^2)_n}$, which is a mock theta function studied by Andrews, Dyson and Hickerson \cite{adh}.

If we let $c=-1$ in Theorem \ref{genc}, we obtain a result involving the generating function of $N_{\textup{SC}}(n)$. 
\begin{corollary}\label{nsc}
Let $N_{\textup{SC}}(n)$ be defined in \eqref{nscofn}. Then
\allowdisplaybreaks{\begin{align}\label{nsceqn}
&(q)_{\infty}\sum_{n=1}^{\infty}N_{\textup{SC}}(n)q^n+\frac{1}{2}\frac{(q)_{\infty}}{(-q)_{\infty}}\sum_{n=1}^{\infty}\frac{q^{\frac{n(n+1)}{2}}}{(1-q^n)(q)_n}\left(\frac{(-q)_n}{(q)_n}-1\right)\nonumber\\
&=\frac{1}{4}-\frac{1}{4}\frac{(q)_{\infty}}{(-q)_{\infty}}+\frac{1}{2}\frac{(q)_{\infty}}{(-q)_{\infty}}\sum_{n=1}^{\infty}\frac{(-q)_n}{(q)_n}\frac{q^n}{1-q^n}.
\end{align}}
\end{corollary}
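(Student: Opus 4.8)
The plan is to set $c=-1$ in \eqref{finefeqn} and then reshape each side into the form displayed in \eqref{nsceqn}. The right-hand side is the routine part: since $(-1;q)_{\infty}=2(-q)_{\infty}$, one has $\frac{(q)_{\infty}}{(c)_{\infty}}=\frac{(q)_{\infty}}{2(-q)_{\infty}}$, while $\frac{-c}{(1-c)^2}=\frac14$, $\frac{c}{1-c}=-\frac12$, and $(cq)_n=(-q)_n$. Substituting these and distributing $\frac{(q)_{\infty}}{2(-q)_{\infty}}$ across the parenthesis reproduces exactly the three terms on the right of \eqref{nsceqn}, so all the genuine work goes into rewriting the two series on the left.

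The first series becomes $\sum_{n\ge1}\frac{(-1)^{n-1}nq^{n(n+1)/2}}{(1+q^n)(q)_n}$, and I would show that it equals $(q)_{\infty}\sum_{n\ge1}\frac{(-1)^{n-1}q^{n^2}}{(q;q^2)_n}$. Combined with the Andrews, Garvan and Liang evaluation $\sum_n N_{\textup{SC}}(n)q^n=\sum_n \frac{(-1)^{n-1}q^{n^2}}{(q;q^2)_n}$ quoted just before the statement, this is precisely the first term $(q)_{\infty}\sum_n N_{\textup{SC}}(n)q^n$ of \eqref{nsceqn}. To establish this auxiliary identity I would try to pass from the $q^{n(n+1)/2}$-series to the $q^{n^2}$-series using Euler's identity $\sum_{n\ge0}\frac{q^{n(n+1)/2}}{(q)_n}x^n=(-xq)_{\infty}$ together with a Rogers--Fine transformation and a geometric expansion of $(1+q^n)^{-1}$; failing a clean transformation, I would verify that both sides satisfy the same $q$-recurrence (equivalently, agree as formal power series). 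The weight $n$ and the factor $(1+q^n)^{-1}$ make this more delicate than the unweighted $z=1$ identity sitting inside Corollary \ref{fixed z and c tends to -1}.

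For the second series I would expand Fine's function from its definition, $F(0,q^n;-q^n)=\sum_{m\ge0}\frac{(-q^n)^m}{(q^{n+1})_m}$, and use $(q^{n+1})_{\infty}=(q)_{\infty}/(q)_n$ together with $(q^{n+1})_m=(q)_{n+m}/(q)_n$ to write the term as a double sum over $n$ and $m$. After interchanging the order of summation and re-indexing by $j=n+m$, the aim is to collapse the inner sum and recover $\frac12\frac{(q)_{\infty}}{(-q)_{\infty}}\sum_{n\ge1}\frac{q^{n(n+1)/2}}{(1-q^n)(q)_n}\bigl(\tfrac{(-q)_n}{(q)_n}-1\bigr)$, the correction term in \eqref{nsceqn}. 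Here the factor $\frac{1}{(-q)_{\infty}}$ and the ratio $\frac{(-q)_n}{(q)_n}$ should emerge from evaluating the $j$-series in closed product form.

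I expect this last reduction to be the main obstacle: one must convert a $q^{n(n+1)}$-weighted double sum built from $F(0,q^n;-q^n)$ into a single $q^{n(n+1)/2}$-weighted series carrying the quotient $(q)_{\infty}/(-q)_{\infty}$, which is exactly the square-to-triangular exponent transition characteristic of the Andrews, Dyson and Hickerson mock theta function underlying $N_{\textup{SC}}(n)$. Matching the Fine's-function series against that mock theta representation, and correctly producing $(-q)_{\infty}$ in the denominator, is where the bookkeeping will be heaviest.
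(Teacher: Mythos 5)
Your proposal follows the same skeleton as the paper --- set $c=-1$ in \eqref{finefeqn}, reduce the right-hand side via $(-1;q)_{\infty}=2(-q)_{\infty}$ (that part of your argument is correct and matches the paper), identify the first left-hand series with $(q)_{\infty}\sum_{n\geq1}N_{\textup{SC}}(n)q^n$, and convert the Fine-function series into the correction term --- but both substantive steps are left unproven, and the second, as described, would fail. For the first series, the paper simply cites Andrews--Garvan--Liang \cite[Theorem 3.8, Equation (3.25)]{agl13}, which states exactly that $\sum_{n\geq1}\frac{(-1)^{n-1}nq^{n(n+1)/2}}{(q)_n(1+q^n)}=(q)_{\infty}\sum_{n\geq1}N_{\textup{SC}}(n)q^n$. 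You instead propose to prove that this series equals $(q)_{\infty}\sum_{n\geq1}\frac{(-1)^{n-1}q^{n^2}}{(q;q^2)_n}$, which is that result combined with AGL's Theorem 1.2, i.e.\ a genuinely nontrivial theorem. Your proposed means --- ``Euler's identity together with a Rogers--Fine transformation,'' or else checking that ``both sides satisfy the same $q$-recurrence (equivalently, agree as formal power series)'' --- do not constitute a proof: no transformation is exhibited, no recurrence is written down, and termwise agreement of formal power series is an infinite verification, not a method. This step needs either the citation the paper uses or real work.

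The second gap is sharper, because the plan as stated stalls. Expanding from the definition, $F(0,q^n;-q^n)=\sum_{m\geq0}\frac{(-q^n)^m}{(q^{n+1})_m}$, and re-indexing by $j=n+m$ turns the second left-hand series into
\begin{align*}
(q)_{\infty}\sum_{n\geq1}\frac{1}{(1-q^n)(q)_n}\sum_{j\geq n}\frac{(-1)^{j-n}q^{n(j+1)}}{(q)_j},
\end{align*}
a double sum with rectangular exponents $q^{n(j+1)}$ and alternating signs, in which neither the prefactor $(q)_{\infty}/(-q)_{\infty}$ nor the triangular exponents $q^{k(k+1)/2}$ of the target are visible; the inner sum is a tail of Euler's series $\sum_{j\geq0}\frac{(-1)^jq^{nj}}{(q)_j}=\frac{1}{(-q^n)_{\infty}}$ and has no closed product form, so the hoped-for ``collapse'' does not occur by re-indexing. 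What the paper actually uses, and what your plan is missing, are two specific identities: Fine's transformation \cite[p.~17, Equation (15.51)]{fine},
\begin{align*}
F(0,q^n;-q^n)=\frac{1-q^n}{(-q^n)_{\infty}}\sum_{j=0}^{\infty}\frac{q^{nj+j(j+1)/2}}{(1-q^{n+j})(q)_j},
\end{align*}
which is precisely what creates both the factor $(q)_{\infty}/(-q)_{\infty}$ and the triangular exponents, and then, after interchanging summation with $k=n+j$, the $q$-Chu--Vandermonde summation \cite[Corollary 1.2]{overp}, which evaluates the resulting inner Gaussian-binomial sum as $\frac{(-q)_k}{(q)_k}-1$; this is the paper's computation \eqref{aq}. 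Without these (or equivalent) tools, the reduction breaks down exactly at the point you yourself flag as ``the main obstacle''; flagging the obstacle does not remove it.
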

Using a result conjectured by Beck and proved by Chern \cite[Theorem 1.2]{chern}, we show in Section \ref{diffe} that the above identity reduces to a new representation for the generating function of $d(n)$.
\begin{corollary}\label{newdn}
We have
\begin{equation}\label{newdneqn}
\sum_{n=1}^{\infty}\frac{(-q)_n}{(q)_n}\frac{q^n}{1-q^n}-2\sum_{n=1}^{\infty}\frac{(-q)_{n-1}}{(q)_n^{2}}\frac{q^{n(n+3)/2}}{1-q^n}=\sum_{n=1}^{\infty}\frac{q^n}{1-q^n}.
\end{equation}
Hence
\begin{equation}\label{newdnrep}
\sum_{\pi\in\mathcal{P}_{o}(n) \atop l(\pi)\text{overlined}}(2L(\pi)-1)-\sum_{\pi\in\mathcal{P}^{*}(n)\atop L(\pi)\geq 2}L(\pi)(L(\pi)-1)\prod_{i=1}^{l(\pi)-1}(2\nu(i)-1)=d(n).
\end{equation}
\end{corollary}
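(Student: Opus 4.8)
The plan is to prove the analytic identity \eqref{newdneqn} first and then read off the partition identity \eqref{newdnrep} by comparing coefficients of $q^n$. Everything needed is already assembled in Corollary \ref{nsc}; the only genuinely new external ingredient is the Beck-type identity of Chern \cite[Theorem 1.2]{chern}, whose role is to eliminate the mock-theta contribution in \eqref{nsceqn} and leave a clean Lambert-series statement about $d(n)$. Concretely, I would begin by multiplying \eqref{nsceqn} through by $2(-q)_{\infty}/(q)_{\infty}$ to clear the recurring factor $\tfrac12(q)_{\infty}/(-q)_{\infty}$, then expand the bracket $\left(\frac{(-q)_n}{(q)_n}-1\right)$ and substitute the known evaluation $\sum_{n=1}^{\infty}N_{\textup{SC}}(n)q^n=\sum_{n=1}^{\infty}\frac{(-1)^{n-1}q^{n^2}}{(q;q^2)_n}$ of the generating function of $N_{\textup{SC}}$. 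After this, the only obstruction to \eqref{newdneqn} is the single term $2(-q)_{\infty}\sum_{n=1}^{\infty}N_{\textup{SC}}(n)q^n$ together with the leftover constant and the modular factor $(-q)_{\infty}/(q)_{\infty}$.

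The second step is to feed in Chern's theorem, which supplies exactly the identity needed to rewrite this mock-theta combination in terms of the elementary series $\sum_{n=1}^{\infty}q^n/(1-q^n)$ and the two overpartition/gapless series occurring in \eqref{newdneqn}. Substituting this and checking that the spurious constant and the $(-q)_{\infty}/(q)_{\infty}$ pieces cancel should yield \eqref{newdneqn}. I expect this matching to be the main obstacle: one must align Chern's normalization with the factor $(-q)_{\infty}$ present here and verify that every non-Lambert term collapses precisely, which is the delicate bookkeeping in the whole argument.

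Finally, to pass from \eqref{newdneqn} to \eqref{newdnrep} I would interpret each series as a generating function and equate coefficients of $q^n$. For the first series I fix the largest part $n$ and mark it overlined: the overlined copy contributes $q^n$, the $j\ge 0$ non-overlined copies contribute $\sum_{j\ge 0}(2j+1)q^{jn}=\frac{1+q^n}{(1-q^n)^2}$ carrying the weight $2L(\pi)-1=2j+1$, and the smaller parts form an arbitrary overpartition with generating function $\frac{(-q)_{n-1}}{(q)_{n-1}}$; the product simplifies to $\frac{(-q)_n}{(q)_n}\frac{q^n}{1-q^n}$, recovering the sum over $\mathcal{P}_o(n)$ with largest part overlined. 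For the second series I fix the largest part $n$ of a gapless partition: the part $n$ with $L\ge 2$ copies contributes $\sum_{L\ge 2}L(L-1)q^{nL}=\frac{2q^{2n}}{(1-q^n)^3}$, while each smaller part $i$ contributes $\sum_{\nu\ge 1}(2\nu-1)q^{i\nu}=\frac{q^i(1+q^i)}{(1-q^i)^2}$ carrying the weight $2\nu(i)-1$, and the product simplifies to $2\frac{(-q)_{n-1}}{(q)_n^2}\frac{q^{n(n+3)/2}}{1-q^n}$, recovering the sum over $\mathcal{P}^*(n)$. Since $\sum_{n=1}^{\infty}q^n/(1-q^n)$ is the generating function of $d(n)$, comparing coefficients of $q^n$ in \eqref{newdneqn} gives \eqref{newdnrep}. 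This last step is routine once the two weight sums $\frac{1+q^n}{(1-q^n)^2}$ and $\frac{2q^{2n}}{(1-q^n)^3}$ have been identified.
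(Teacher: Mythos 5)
Your overall skeleton (multiply \eqref{nsceqn} by $2(-q)_{\infty}/(q)_{\infty}$, expand the bracket, invoke Chern, then read off coefficients) is the same as the paper's, and your final combinatorial step is correct: the weight computations $\sum_{j\geq 0}(2j+1)q^{jn}=\frac{1+q^n}{(1-q^n)^2}$ and $\sum_{L\geq 2}L(L-1)q^{nL}=\frac{2q^{2n}}{(1-q^n)^3}$ reproduce exactly the paper's derivations of \eqref{newdnrep2} and \eqref{newdnrep1}. However, your middle step --- the actual derivation of \eqref{newdneqn} --- has a genuine gap, and the specific route you sketch would not work. Chern's theorem is the purely combinatorial statement $c(n)=\textup{ssptd}_o(n)$ (compact partitions versus sums of smallest parts over distinct partitions with an odd number of parts); it says nothing directly about the mock theta function $\sum_{n\geq 1}\frac{(-1)^{n-1}q^{n^2}}{(q;q^2)_n}$, so substituting that representation of $\sum N_{\textup{SC}}(n)q^n$ is a dead end. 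The paper instead uses the \emph{other} AGL representation, $\sum_{n\geq 1} N_{\textup{SC}}(n)q^n=\frac{1}{(-q)_{\infty}}\sum_{n\geq 1}\frac{(-q)_{n-1}q^n}{1-q^n}$, whose coefficients count partitions in which only the largest part may repeat; by a result of Andrews these are conjugates of compact partitions, so Chern gives that they are counted by $\textup{ssptd}_o(n)$. Combining this with the interpretation of $\sum_{n\geq 1}\frac{q^{n(n+1)/2}}{(1-q^n)(q)_n}$ as $\sum(\textup{ssptd}_o(n)+\textup{ssptd}_e(n))q^n$ and with the Fokkink--Fokkink--Wang identity $\textup{ssptd}_o(n)-\textup{ssptd}_e(n)=d(n)$ yields the key lemma
\begin{equation*}
2(-q)_{\infty}\sum_{n=1}^{\infty}N_{\textup{SC}}(n)q^n-\sum_{n=1}^{\infty}\frac{q^{\frac{n(n+1)}{2}}}{(1-q^n)(q)_n}=\sum_{n=1}^{\infty}\frac{q^n}{1-q^n}.
\end{equation*}
None of these four ingredients (the Lambert-type representation of $N_{\textup{SC}}$, Andrews' conjugation result, the ssptd interpretation, FFW) appears in your outline, and without them ``Chern supplies exactly the identity needed'' is an unjustified black box.

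A second concrete error: the leftover terms $\frac{1}{2}\frac{(-q)_{\infty}}{(q)_{\infty}}-\frac{1}{2}$ do \emph{not} cancel, contrary to your expectation. After applying the lemma one arrives at
\begin{equation*}
\sum_{n=1}^{\infty}\frac{(-q)_n}{(q)_n}\frac{q^n}{1-q^n}-\sum_{n=1}^{\infty}\frac{(-q)_{n}}{(q)_n^{2}}\frac{q^{n(n+1)/2}}{1-q^n}+\frac{1}{2}\frac{(-q)_{\infty}}{(q)_{\infty}}-\frac{1}{2}=\sum_{n=1}^{\infty}\frac{q^n}{1-q^n},
\end{equation*}
and one must then expand $\frac{(-q)_{\infty}}{(q)_{\infty}}=\sum_{n\geq 0}\frac{(-1)_n}{(q)_n^2}q^{n(n+1)/2}$ (the overpartition generating-function identity of Corteel--Lovejoy) and merge the resulting series with the second sum above; it is precisely this merger that produces the exponent $q^{n(n+3)/2}$ and the factor $2$ in \eqref{newdneqn}. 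Your sketch, which expects these pieces to collapse, cannot produce that term, so the proposal as written does not reach the stated identity.
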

Further special cases of Theorem \ref{genc} when $c=q$, and more generally, $c=q^m, m\geq 1$, are given in Section \ref{diffe}.

The last of the five $q$-series identities of Ramanujan \cite[p.~355]{ramanujanoriginalnotebook2}, \cite[p.~265, Entry 5]{bcbramforthnote} given by 
\begin{align}\label{fifth}
\sum_{n=1}^{\infty} \frac{(q)_{n-1} a^n}{(1-q^n) (a)_n }=\sum_{n=1}^{\infty} \frac{n a^n}{1-q^n},
\end{align}
where $(a)_n\neq 0, n\geq 1$, was proved by Berndt as a special case of \eqref{neglected}. In the same vein, our Theorem \ref{gen of Ramanujan's identity} gives the following generalization of \eqref{fifth}:
\begin{theorem}\label{g5}
For $|zq|<1$ and $|c|\leq 1$,
\begin{align}\label{generalization of Entry 5}
\sum_{n=1}^{\infty}   \frac{(q)_{n-1} z^nq^n }{ (1-c q^n) (zq)_n } &=z\sum_{n=1}^{\infty}\frac{(zq/c)_{n-1}}{(zq)_n}\frac{c^{n-1}q^n}{1-zq^{n}}.
\end{align}
Additionally if $|zq|<|c|$, then
\begin{align}\label{generalization of Entry 5o}
\sum_{n=1}^{\infty}   \frac{(q)_{n-1} z^nq^n }{ (1-c q^n) (zq)_n }= \frac{ (\frac{zq}{c})_\infty}{(zq)_\infty} \sum_{n=0}^{\infty} \frac{(c)_n \left( \frac{zq}{c} \right)^n }{(q)_n}    \sum_{m=1}^{\infty}   \frac{m z^mq^m}{1- c q^{m+n}  }.
\end{align}
\end{theorem}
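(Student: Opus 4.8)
The plan is to obtain both \eqref{generalization of Entry 5} and \eqref{generalization of Entry 5o} from Theorem \ref{gen of Ramanujan's identity} by exactly the confluence argument that lets one pass from \eqref{neglected} to \eqref{fifth}, now carried out at the more general level of \eqref{entry3gen} and \eqref{entry3gen00}. Concretely, I would set $b=zq$ and write $a=zqt$, so that $b/a=1/t$ and the left-hand side of \eqref{entry3gen} becomes $\sum_{n\ge1}\frac{(1/t)_n\,(zqt)^n}{(1-cq^n)(zq)_n}$. Both sides now vanish at $t=1$: on the left because $(1/t)_n\to(1)_n=0$, and on the right because the bracketed difference collapses to $0$. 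Hence $t=1$ carries no information by itself, and to extract content I would divide both sides by $1-1/t$ and then let $t\to1$.

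On the left, since $\frac{(1/t)_n}{1-1/t}=\prod_{k=1}^{n-1}\bigl(1-q^k/t\bigr)\to(q)_{n-1}$ and $(zqt)^n\to z^nq^n$, the series tends termwise to $\sum_{n\ge1}\frac{(q)_{n-1}z^nq^n}{(1-cq^n)(zq)_n}$, the left side of \eqref{generalization of Entry 5}. On the right of \eqref{entry3gen} only the bracket depends on $t$; writing $u=zq^{m+1}$ one has
\begin{equation*}
\frac{1}{1-1/t}\left(\frac{ut}{1-ut}-\frac{u}{1-u}\right)=\frac{ut}{(1-ut)(1-u)}\xrightarrow[t\to1]{}\frac{u}{(1-u)^2}=\frac{zq^{m+1}}{(1-zq^{m+1})^2}.
\end{equation*}
Re-indexing $n=m+1$ and using $(zq)_{n-1}(1-zq^n)=(zq)_n$ then collapses $\sum_{m\ge0}\frac{(zq/c)_m c^m}{(zq)_m}\frac{zq^{m+1}}{(1-zq^{m+1})^2}$ to $z\sum_{n\ge1}\frac{(zq/c)_{n-1}}{(zq)_n}\frac{c^{n-1}q^n}{1-zq^n}$, which is the right side of \eqref{generalization of Entry 5}. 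Identity \eqref{generalization of Entry 5o} is produced the same way from \eqref{entry3gen00}: the prefactor $(zq/c)_\infty/(zq)_\infty$ and the outer $n$-sum are free of $t$, while $\frac{a^m-b^m}{1-1/t}=z^mq^m\frac{t(t^m-1)}{t-1}\to m z^mq^m$, giving the stated double sum; here the standing hypothesis $|b|<|c|$ turns into $|zq|<|c|$.

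The only delicate point, and the main obstacle, is to justify interchanging $\lim_{t\to1}$ with the two infinite sums, since everything above is a termwise computation. For real $t$ in a neighbourhood of $1$ I would exhibit a fixed dominating geometric series $Cr^n$ with $r<1$: using $|zq|<1$, $|q|<1$ and $|c|\le1$ we have $|1-cq^n|\ge 1-|q|^n$ bounded below, $|1/(zq)_n|$ bounded by its limit, and $\prod_{k=1}^{n-1}|1-q^k/t|$ bounded uniformly in $t$, so the left-hand terms and (after the same estimates, together with $|zq|<|c|$ on the right of \eqref{entry3gen00}) the right-hand terms are dominated uniformly in $t$. Dominated convergence then legitimises the interchange, after which the termwise limits computed above complete the proof; one only records that $1-1/t\ne0$ for $t\ne1$, so the division preceding the limit is valid.
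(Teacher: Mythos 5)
Your proposal is correct and is essentially the paper's own proof: the paper obtains \eqref{generalization of Entry 5} and \eqref{generalization of Entry 5o} precisely by dividing \eqref{entry3gen} and \eqref{entry3gen00} by $1-b/a$, letting $b\to a$, and then setting $a=zq$, which is exactly your confluence with $b=zq$, $a=zqt$, $t\to 1$. The only difference is that you spell out the termwise limits and add a dominated-convergence justification for interchanging the limit with the sums, a point the paper leaves implicit.
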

This result gives a nice weighted partition identity given below.
\begin{theorem}\label{g5wpi}
If $z$ and $c$ are not a function of $q$, then 
\begin{align*}
\sum_{\pi \in \mathcal{P}(n) } z^{l(\pi)+ \#(\pi) - L(\pi)} \left(1- \tfrac{1}{z}\right)^{\nu_d(\pi)-1} \left(\tfrac{z^{L(\pi)} -c^{L(\pi)} }{z - c}\right) = \sum_{\pi \in \mathcal{P}(n) } c^{l(\pi)-1} z^{\#(\pi)} \left(1- \tfrac{1}{c}\right)^{\nu_d(\pi)-1} L(\pi) .
\end{align*}
\end{theorem}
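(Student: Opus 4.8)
The plan is to prove Theorem \ref{g5wpi} by expanding both sides of the first identity \eqref{generalization of Entry 5} of Theorem \ref{g5} as generating functions over all partitions and then equating the coefficients of $q^n$. This is the same mechanism that produces Theorem \ref{gwpi} from \eqref{gen of Garavan}; the only new feature here is that the extra factor appearing in \eqref{generalization of Entry 5} forces us to track the multiplicity $L(\pi)$ of the largest part. Throughout, the running index $n$ in each series will play the role of the largest part $l(\pi)$.

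First I would treat the right-hand side of \eqref{generalization of Entry 5}. Using $(zq)_n=(zq)_{n-1}(1-zq^n)$, it becomes
$$z\sum_{n=1}^{\infty} c^{n-1}q^n\,\frac{(zq/c)_{n-1}}{(zq)_{n-1}}\,\frac{1}{(1-zq^n)^2}.$$
Here $c^{n-1}=c^{l(\pi)-1}$, and the factor $zq^n/(1-zq^n)^2=\sum_{L\geq 1}L\,z^L q^{nL}$ records the largest part occurring $L=L(\pi)$ times, supplying both the weight $L(\pi)$ and the power $z^{L(\pi)}$. For each $j$ with $1\leq j\leq n-1$ I would use $\frac{1-(z/c)q^j}{1-zq^j}=1+(1-\tfrac1c)\sum_{m\geq 1}z^m q^{jm}$, so that every distinct part $j<n$ that appears contributes one factor $1-\tfrac1c$ and a power $z^m$ for its multiplicity. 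Collecting powers of $z$ gives $z^{\#(\pi)}$, together with $(1-\tfrac1c)^{\nu_d(\pi)-1}$ and the weight $L(\pi)$, which is exactly the right-hand side of Theorem \ref{g5wpi}.

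The more delicate computation is the left-hand side of \eqref{generalization of Entry 5}, which I write as
$$\sum_{n=1}^{\infty} z^n q^n\,\frac{1}{(1-cq^n)(1-zq^n)}\,\prod_{j=1}^{n-1}\frac{1-q^j}{1-zq^j}.$$
The product over $j<n$ generates the parts below the largest one via $\frac{1-q^j}{1-zq^j}=1+(1-\tfrac1z)\sum_{m\geq 1}z^m q^{jm}$, contributing $(1-\tfrac1z)^{\nu_d(\pi)-1}$ and $z^{\#(\pi)-L(\pi)}$. The crux is the two–variable factor $z^nq^n/[(1-cq^n)(1-zq^n)]$ attached to the largest part. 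Expanding $\frac{1}{(1-cq^n)(1-zq^n)}=\sum_{a,b\geq 0}c^a z^b q^{n(a+b)}$ and extracting the coefficient of $q^{nL}$ (so that $a+b=L-1$) collapses the double geometric sum to
$$z^n\sum_{a=0}^{L-1}c^a z^{L-1-a}=z^{n}\,\frac{z^{L}-c^{L}}{z-c},$$
which is precisely $z^{l(\pi)}\,\dfrac{z^{L(\pi)}-c^{L(\pi)}}{z-c}$. Multiplying by the $z^{\#(\pi)-L(\pi)}$ coming from the smaller parts turns the $z$-exponent into $l(\pi)+\#(\pi)-L(\pi)$, and assembling all factors reproduces the left-hand side of Theorem \ref{g5wpi}.

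I expect the main obstacle to be exactly this last step: organizing the double geometric series attached to the largest part so that its multiplicity $L$ simultaneously ties $c$ and $z$ into the closed form $\tfrac{z^{L}-c^{L}}{z-c}$, while checking that the $z$-powers contributed separately by the largest part and by the remaining parts combine into the single exponent $l(\pi)+\#(\pi)-L(\pi)$. Once these bookkeeping identities for $\#(\pi)$, $\nu_d(\pi)$ and $L(\pi)$ are verified on each side, equating the coefficients of $q^n$ in \eqref{generalization of Entry 5} yields the theorem.
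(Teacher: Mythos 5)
Your proposal is correct and follows exactly the route the paper indicates: the paper omits the detailed proof of Theorem \ref{g5wpi}, stating it is derived from \eqref{generalization of Entry 5} just as Theorem \ref{gwpi} is derived from \eqref{gen of Garavan}, and your expansions (including collapsing the double geometric series attached to the largest part into $z^{n}\tfrac{z^{L}-c^{L}}{z-c}$, which is the same identity the paper cites from Andrews--Garvan--Liang in the proof of Theorem \ref{gwpi}) supply precisely those omitted details. The bookkeeping of $l(\pi)$, $\#(\pi)$, $\nu_d(\pi)$ and $L(\pi)$ on both sides checks out.
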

From the above theorem, we obtain two beautiful weighted partition identities, the first of which is
\begin{corollary}\label{cl}
The following identity holds:
\begin{align}\label{cleqn}
\sum_{n=1}^{\infty} \frac{ q^n }{1- q^{2n}} = \sum_{n=1} (-1)^{n-1} \frac{(-q)_{n-1}}{ (q)_{n}} \frac{q^n}{1-q^n}.
\end{align}
Hence if $d_o(n)$ denotes the number of odd divisors of $n$,
\begin{align}
d_{o}(n) & = \sum_{\pi \in \mathcal{P}(n)} (-1)^{l(\pi)-1} 2^{\nu_d(\pi)-1} L(\pi) \label{clpti}
\end{align}
\end{corollary}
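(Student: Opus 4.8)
The plan is to obtain both identities of Corollary \ref{cl} by specializing $z=1$ and $c=-1$: the analytic identity \eqref{cleqn} will come from \eqref{generalization of Entry 5} of Theorem \ref{g5}, and the weighted partition identity \eqref{clpti} from its combinatorial companion Theorem \ref{g5wpi}. These values satisfy $|zq|=|q|<1$ and $|c|=1$, so the hypotheses of both theorems hold and the specialization is legitimate.

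For \eqref{cleqn} I would set $z=1$, $c=-1$ in \eqref{generalization of Entry 5}. On the left side $(zq)_n$ becomes $(q)_n$, and the relation $(q)_n=(1-q^n)(q)_{n-1}$ collapses $(q)_{n-1}/(q)_n$ to $1/(1-q^n)$; combined with $1-cq^n=1+q^n$ and $(1-q^n)(1+q^n)=1-q^{2n}$, the left side becomes $\sum_{n=1}^{\infty}q^n/(1-q^{2n})$. On the right side $(zq/c)_{n-1}=(-q)_{n-1}$ and $c^{n-1}=(-1)^{n-1}$, while the prefactor $z=1$ disappears, yielding $\sum_{n=1}^{\infty}(-1)^{n-1}\frac{(-q)_{n-1}}{(q)_n}\frac{q^n}{1-q^n}$. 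This is \eqref{cleqn}.

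For \eqref{clpti} I would put $z=1$, $c=-1$ in Theorem \ref{g5wpi}. The right-hand weight $c^{l(\pi)-1}z^{\#(\pi)}(1-1/c)^{\nu_d(\pi)-1}L(\pi)$ becomes $(-1)^{l(\pi)-1}2^{\nu_d(\pi)-1}L(\pi)$, exactly the summand in \eqref{clpti}. The crux is the left-hand side: the factor $(1-1/z)^{\nu_d(\pi)-1}=0^{\nu_d(\pi)-1}$ vanishes unless $\nu_d(\pi)=1$, so only the rectangular partitions survive---those with $j$ equal parts of some size $k$, where $n=jk$. For such $\pi$ one has $l(\pi)+\#(\pi)-L(\pi)=k+j-j=k$, so $z^{k}=1$, while the bracket is $\frac{z^{L(\pi)}-c^{L(\pi)}}{z-c}=\frac{1-(-1)^{j}}{2}$, equal to $1$ for odd $j$ and $0$ for even $j$. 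Thus the left side counts the factorizations $n=jk$ with $j$ odd, one per odd divisor $j$ of $n$, and so equals $d_o(n)$; this gives \eqref{clpti}.

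The main point to handle carefully is this collapse on the left: one must use the convention $0^{0}=1$ to retain the $\nu_d(\pi)=1$ terms, and then check that the surviving enumeration really is the number of \emph{odd} divisors and not of all divisors. The bracket $\frac{1-(-1)^{j}}{2}$ is what does this bookkeeping, discarding the even-multiplicity rectangles and leaving precisely the odd ones, in agreement with the coefficient $d_o(n)$ read off from the left side of \eqref{cleqn}. Everything else is routine Pochhammer algebra.
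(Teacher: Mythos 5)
Your proposal is correct and follows essentially the same route as the paper: the paper also derives Corollary \ref{cl} by specializing Theorems \ref{g5} and \ref{g5wpi} at $z=1$ and $c=-1$, merely splitting the substitution into two steps (an intermediate corollary at $z=1$ with general $c$, whose left side is the divisor sum $\sum_{d\mid n}\frac{c^d-1}{c-1}$, followed by $c=-1$). Your one-step substitution, including the collapse to rectangular partitions via $(1-1/z)^{\nu_d(\pi)-1}$ and the bracket $\frac{1-(-1)^{j}}{2}$ selecting odd divisors, matches the paper's argument in substance.
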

Though Corteel and Lovejoy \cite[p.~1631]{overp} have obtained \eqref{cleqn}, the identity \eqref{clpti} appears to be new. Garvan obtained another weighted partition representation for $d_{o}(n)$ in \cite[Corollary (1.3) (i)]{garvan1}, and a yet another one is given in Section \ref{weight}.

The second result that we obtain using \eqref{generalization of Entry 5} is
\begin{corollary}\label{cli}
The following identity holds:
\begin{align*}
\sum_{n=1}^{\infty} \frac{ n (-1)^n q^n }{1 - q^n } = \sum_{n=1}^{\infty} \frac{(-1)^n (q)_{n-1} }{(-q)_{n-1}}
\frac{q^n}{1- q^{2n}}.
\end{align*}
Hence
\begin{align*}
\sum_{d |n } (-1)^d d = \sum_{\substack{ \pi \in \mathcal{P}(n) \\ L(\pi)\, \mathrm{is\, odd} } } (-1)^{\textup{rank}(\pi)-1} 2^{\nu_{d}(\pi) - 1}.
\end{align*} 
\end{corollary}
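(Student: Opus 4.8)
The plan is to derive both assertions from the specialization $z=-1$, $c\to 1$ of Theorem~\ref{g5}. First I would set $z=-1$ in the common left-hand side of \eqref{generalization of Entry 5} and \eqref{generalization of Entry 5o} and simplify: using $(-q)_n=(1+q^n)(-q)_{n-1}$ together with $(1-q^n)(1+q^n)=1-q^{2n}$, the series $\sum_{n=1}^{\infty}\frac{(q)_{n-1}(-1)^nq^n}{(1-cq^n)(-q)_n}$ becomes, at $c=1$, exactly $\sum_{n=1}^{\infty}\frac{(-1)^n(q)_{n-1}}{(-q)_{n-1}}\frac{q^n}{1-q^{2n}}$, the right-hand side of the first displayed identity. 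For the other side I would avoid \eqref{generalization of Entry 5} itself, since at $c=1$ its right-hand side degenerates merely to $-\sum_{n=1}^{\infty}q^n/(1+q^n)^2$, a correct but uninformative closed form; instead I would pass to the double-sum representation \eqref{generalization of Entry 5o}. There, exactly as in the observation following Theorem~\ref{gen of Ramanujan's identity} (where putting $c=1$ in \eqref{entry3gen00} annihilates every term but $n=0$), the factor $(c)_n=(1)_n$ vanishes for $n\geq 1$ while the prefactor $(\tfrac{zq}{c})_{\infty}/(zq)_{\infty}\to 1$, so only the $n=0$ term of the outer sum survives and leaves $\sum_{m=1}^{\infty}\frac{m z^m q^m}{1-q^m}$, i.e.\ $\sum_{m=1}^{\infty}\frac{m(-1)^m q^m}{1-q^m}$ at $z=-1$. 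Equating the two evaluations proves the first identity.

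Next I would compare coefficients of $q^N$ to obtain the partition identity. Expanding the Lambert series as $\sum_{n=1}^{\infty}\frac{n(-1)^nq^n}{1-q^n}=\sum_{n=1}^{\infty}n(-1)^n\sum_{k=1}^{\infty}q^{nk}$ shows that its coefficient of $q^N$ is $\sum_{d\mid N}(-1)^d d$, the left-hand side of the claim. The coefficient of $q^N$ on the product side is, by the master weighted identity Theorem~\ref{g5wpi} specialized to $z=-1$, $c=1$, a weighted count over $\pi\in\mathcal{P}(N)$: the weight $\frac{z^{L(\pi)}-c^{L(\pi)}}{z-c}=\frac{(-1)^{L(\pi)}-1}{-2}$ equals $1$ when $L(\pi)$ is odd and $0$ when it is even, which installs precisely the restriction to partitions with $L(\pi)$ odd; the factor $(1-\tfrac{1}{z})^{\nu_d(\pi)-1}$ becomes $2^{\nu_d(\pi)-1}$; and on the surviving partitions $z^{l(\pi)+\#(\pi)-L(\pi)}=(-1)^{l(\pi)+\#(\pi)-L(\pi)}=(-1)^{l(\pi)+\#(\pi)-1}=(-1)^{l(\pi)-\#(\pi)-1}=(-1)^{\textup{rank}(\pi)-1}$, the last two exponents differing by the even integer $2\#(\pi)$. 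This is exactly the right-hand side of the claim. As an internal check, the right-hand side of Theorem~\ref{g5wpi} at $z=-1$, $c=1$ collapses through $(1-\tfrac{1}{c})^{\nu_d(\pi)-1}=0^{\nu_d(\pi)-1}$ to the partitions with $\nu_d(\pi)=1$, namely a single repeated part $l\mid N$ occurring $N/l$ times, and recovers $\sum_{d\mid N}(-1)^d d$.

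The main obstacle is the boundary limit $c\to 1$. Both \eqref{generalization of Entry 5} and \eqref{generalization of Entry 5o} are stated only for $|c|\leq 1$ (and \eqref{generalization of Entry 5o} for $|zq|<|c|$), so $c=1$ lies on the boundary and the collapse of the outer sum in \eqref{generalization of Entry 5o} must be justified as a genuine limit rather than a formal substitution; one must verify that for $|q|<1$ and $z=-1$ both sides are continuous up to $c=1$ and that the term-by-term passage ($(c)_n\to 0$ for $n\geq 1$, prefactor $\to 1$) is legitimate. The crux is exactly that the simple form \eqref{generalization of Entry 5} is blind to the divisor sum at $c=1$, so the Lambert series is visible only through \eqref{generalization of Entry 5o}. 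A smaller, subsidiary care is needed when reading off Theorem~\ref{g5wpi} at $c=1$: the factors $(1-\tfrac{1}{c})^{\nu_d(\pi)-1}$ and $\frac{z^{L(\pi)}-c^{L(\pi)}}{z-c}$ look indeterminate but are in fact continuous there, each $q^N$-coefficient being a finite sum, so the specialization is valid by continuity. Everything else is routine $q$-Pochhammer bookkeeping and a parity count.
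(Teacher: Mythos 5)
Your proof is correct; its combinatorial half (Theorem \ref{g5wpi} at $z=-1$, $c=1$, with the parity bookkeeping and the collapse of the $c$-side to $\sum_{d\mid n}(-1)^d d$) is exactly the paper's argument, but your analytic half takes the other branch of Theorem \ref{g5}. The paper first records the $z=-1$ case of \eqref{generalization of Entry 5},
\begin{equation*}
\sum_{n=1}^{\infty} \frac{(q)_{n-1}(-1)^n q^n}{(1-cq^n)(-q)_n} = -\sum_{n=1}^{\infty}\frac{(-q/c)_{n-1}}{(-q)_{n-1}}\frac{c^{n-1}q^n}{(1+q^{n})^2},
\end{equation*}
and then sets $c=1$, whose right-hand side is precisely the expression you dismiss as uninformative, namely $-\sum_{n\geq 1}q^n/(1+q^n)^2$; the paper's one extra observation is that this expression \emph{is} the desired Lambert series: expanding $q^n/(1+q^n)^2=\sum_{k\geq 1}(-1)^{k-1}kq^{nk}$ and interchanging the two summations gives $-\sum_{n\geq 1}q^n/(1+q^n)^2=\sum_{n\geq 1}n(-1)^nq^n/(1-q^n)$. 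You instead extract the Lambert series from the double-sum representation \eqref{generalization of Entry 5o} via the collapse $(1)_n=0$ for $n\geq 1$, mirroring how the paper deduces \eqref{neglected} from \eqref{entry3gen00}. Each route has its advantage: yours requires no series rearrangement at all, while the paper's never touches \eqref{generalization of Entry 5o} and hence never meets the constraint $|zq|<|c|$. On that constraint, the ``main obstacle'' you flag is not actually an obstacle: at $z=-1$, $c=1$ the hypotheses $|zq|<1$, $|c|\leq 1$ and $|zq|<|c|$ of Theorem \ref{g5} all hold outright, since $|q|<1$; so $c=1$ is a legitimate direct substitution in \eqref{generalization of Entry 5o} (every term with $n\geq 1$ vanishes identically), and no limiting argument or continuity discussion is needed.
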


This paper is organized as follows. In Section \ref{prelim} we collect some of the fundamental results in $q$-series that will be used several times in the paper. Section \ref{entry3gene} is devoted to the proofs of Theorems \ref{entry3gen}, \ref{gGaravan} and \ref{g5}. In Section \ref{weight}, we give proofs of the weighted partition identities mentioned here in Section \ref{mare}. Many other weighted partition identities are also derived here. Section \ref{app} is reserved for proving Garvan's identity \eqref{garvan's 2nd thm}, obtaining number-theoretic information from its proof, and for giving another proof of Andrews, Garvan and Liang's \eqref{ffwcn} using the Bhargava-Adiga summation. In Section \ref{diffe}, we prove Theorem \ref{genc} and its corollaries. Finally the concluding remarks and possibilities for future work are addressed in Section \ref{cr}.

\section{Preliminary results}\label{prelim}
For $|z|<1$, the $q$-binomial theorem is given by \cite[p.~17, Equation (2.2.1)]{gea1998}
\begin{equation}\label{q-binomial thm}
\sum_{n=0}^{\infty}\frac{(a)_{n}z^n}{(q)_n}=\frac{(az)_{\infty}}{(z)_{\infty}}.
\end{equation}
Let the Gaussian polynomial $\left[\begin{matrix} n\\m\end{matrix}\right]$ be defined by \cite[p.~35]{gea1998}
\begin{equation*}
\left[\begin{matrix} n\\m\end{matrix}\right]=\left[\begin{matrix} n\\m\end{matrix}\right]_{q}:=
\begin{cases}
(q;q)_{n}(q;q)_m^{-1}(q;q)_{n-m}^{-1},\hspace{2mm}\text{if}\hspace{1mm}0\leq m\leq n,\\
0,\hspace{2mm}\text{otherwise}.
\end{cases}
\end{equation*}
From \cite[p.~36, Equation (3.3.7)]{gea1998}, we have
\begin{align}\label{binomial}
\frac{1}{(z)_{N}}=\sum_{j=0}^{\infty}\left[\begin{matrix} N+j-1\\ j\end{matrix} \right]z^j.
\end{align}
The partial fraction decomposition of $F(a,b;t)$ is given by \cite[p. 18, Equation (16.3)]{fine}
\begin{equation}\label{fine16.3}
F(a,b;t)=\frac{(aq)_{\infty}}{(bq)_{\infty}}\sum_{n=0}^{\infty}\frac{(b/a)_n}{(q)_n}\frac{(aq)^n}{1-tq^n}.	
\end{equation}
Heine's transformation \cite[p.~19, Corollary 2.3]{gea1998} gives
\begin{align}\label{heine}
{}_{2}\phi_{1}\left( \begin{matrix} a, & b \\
& c \end{matrix} \, ; q, z  \right) = \frac{(b, a z; q)_{\infty }}{(c, z; q)_{\infty}} {}_{2}\phi_{1}\left( \begin{matrix} \frac{c}{b} , & z \\
& a z \end{matrix} \, ; q, b  \right).
\end{align}

\section{Generalizations of Ramanujan's three $q$-series identities}\label{entry3gene}
Theorem \ref{gen of Ramanujan's identity} is proved in this section. We then give some of its applications.

\begin{proof}[Theorem \textup{\ref{gen of Ramanujan's identity}}][]
Let
\begin{align*}
G(a, b; c):=\sum_{n=1}^{\infty}  \frac{ \left( \frac{b}{a} \right)_n a^n }{ (1- c q^n) (b)_n }.
\end{align*}
Note that
{\allowdisplaybreaks
\begin{align}\label{first difference}
G(a, b; c) -G(aq, bq; c) & = \sum_{n=1}^{\infty}  \left(\frac{ \left( \frac{b}{a} \right)_n a^n }{ (1- c q^n) (b)_n } -  \frac{ \left( \frac{b}{a} \right)_n (a q)^n }{ (1- c q^n) (bq )_n }\right) \nonumber \\
& = \sum_{n=1}^{\infty} \frac{ \left( \frac{b}{a} \right)_n a^n }{ (1- c q^n) (b)_{n+1} } ( 1- b q^n - q^n (1-b) ) \nonumber \\
& =  \sum_{n=1}^{\infty} \frac{ \left( \frac{b}{a} \right)_n a^n }{ (1- c q^n) (b)_{n+1} } (1-q^n ) \nonumber \\
& =  \sum_{n=1}^{\infty} \frac{ \left( \frac{b}{a} \right)_n a^n }{  (b)_{n+1} } \left( \frac{ (1- c q^n) - (1-c) q^n )}{ 1- c q^n } \right) \nonumber \\
& = \sum_{n=1}^{\infty} \frac{ \left( \frac{b}{a} \right)_n a^n }{  (b)_{n+1} } - (1-c) \sum_{n=1}^{\infty} \frac{ \left( \frac{b}{a} \right)_n (aq)^n }{   (1- c q^n) (b)_{n+1} } \nonumber \\
& =  \sum_{n=1}^{\infty} \frac{ \left( \frac{b}{a} \right)_n a^n }{  (b)_{n+1} } - \frac{(1-c)}{(1-b)} G(aq, bq; c).  
\end{align} }
We now use the result in Berndt \cite[p. 264, Equation (3.4)]{bcbramforthnote}, that is, 
\begin{align}\label{constant}
 \sum_{n=1}^{\infty} \frac{ \left( \frac{b}{a} \right)_n a^n }{  (b)_{n+1} } = \frac{a}{1-a} - \frac{b}{1-b},
\end{align}
which is obtained by specializing Heine's transformation and then employing the $q$-binomial theorem. Thus substituting \eqref{constant} in \eqref{first difference},
\begin{align}\label{first term}
G(a,b;c) -  \left( \frac{c-b}{1-b} \right)   G(aq, bq; c) =  \frac{a}{1-a} - \frac{b}{1-b}. 
\end{align}
We now create a telescoping sum. To that end, replace $a$ and $b$ by $aq$ and $bq $ respectively so that  
\begin{align*}
G(aq,bq;c) -  \left( \frac{c-bq}{1-bq} \right)   G(aq^2, bq^2; c) =  \frac{aq}{1-aq} - \frac{bq}{1-bq}. 
\end{align*}
Multiply both sides of the above equation by $(c-b)/(1-b)$ to get
\begin{align}\label{second term}
\left( \frac{c-b}{1-b} \right) G(aq, bq; c) - \left( \frac{c-b}{1-b} \right) \left( \frac{c-bq}{1-bq} \right)  G(aq^2, bq^2; c)= \left( \frac{c-b}{1-b} \right) \left( \frac{aq}{1-aq} - \frac{bq}{1-bq} \right).
\end{align}
Add the corresponding sides of \eqref{first term} and \eqref{second term} to obtain
\begin{align}\label{adding first and second}
G(a,b; c) - \frac{(c-b)(c-bq)}{(1-b)(1-bq)}  G(aq^2, bq^2; c) & = \left(\frac{a}{1-a} - \frac{b}{1-b}\right) + \frac{(c-b)}{(1-b)} \left( \frac{aq}{1-aq} - \frac{bq}{1-bq} \right).
\end{align}
Repeat this process, that is, first replace $a$ by $aq$ and $b$ by $b q$ in \eqref{second term}, multiply both sides by $(c-b)/(1-b)$, and then add the corresponding sides of the resulting identity and \eqref{adding first and second}. At $n^{\textup{th}}$ step, this gives
\begin{align*}
G(a, b; c) - \frac{c^{n+1} \left(\frac{b}{c} \right)_{n+1}}{(b)_{n+1} } G(a q^{n+1}, b q^{n+1} ; c) = \sum_{k=0}^{n} \frac{c^k \left( \frac{b}{c} \right)_k}{ (b)_k } \left(  \frac{a q^k}{1- a q^k } - \frac{b q^k }{1- b q^k } \right). 
\end{align*}
Now let $n \rightarrow \infty$. Since $|c|\leq 1$, the second expression on the left-hand side goes to zero. Therefore,
\begin{align*}
G(a, b; c) & = \sum_{k=0}^{\infty} \frac{c^k \left( \frac{b}{c} \right)_k}{ (b)_k } \left(  \frac{a q^k}{1- a q^k } - \frac{b q^k }{1- b q^k } \right) \nonumber \\
           & =  \sum_{k=0}^{\infty}\frac{c^k \left( \frac{b}{c} \right)_k}{ (b)_k } \sum_{m=1}^{\infty} (a^m - b^m) q^{km} \nonumber \\
           & = \sum_{m=1}^{\infty} (a^m - b^m ) \sum_{k=0}^{\infty} \frac{ \left( \frac{b}{c} \right)_k c^k q^{m k} }{ (b)_k }. 
\end{align*}
Invoking \eqref{fine16.3} in the above equation, we observe that
\begin{align*}
G(a, b; c) & = \frac{ \left( \frac{ b }{c } \right)_{\infty }}{(b)_{\infty} } \sum_{m=1}^{\infty} (a^m - b^m ) \sum_{n=0}^{\infty} \frac{(c)_n}{(q)_n} \frac{ \left( \frac{b}{c} \right)^n }{1- c q^{m+n}} \nonumber \\
           & =   \frac{ \left( \frac{ b }{c } \right)_{\infty }}{(b)_{\infty} }   \sum_{n=0}^{\infty} \frac{(c)_n \left( \frac{b}{c} \right)^n }{(q)_n}    \sum_{m=1}^{\infty} \frac{a^m - b^m }{1- c q^{m+n}  }.
\end{align*}
This completes the proof of Theorem \ref{gen of Ramanujan's identity}. 
\end{proof}

\begin{proof}[Theorem \textup{\ref{gGaravan}}][]
Let $a\to 0$ in \eqref{entry3gen}, use the elementary identity $\lim_{a\to 0}(b/a)_na^n=(-b)^nq^{n(n-1)/2}$, and then replace $b$ by $zq$ to arrive at \eqref{gen of Garavan}.
\end{proof}
We now prove the generalization of \eqref{fifth} given in Theorem \ref{g5}.
\begin{proof}[Theorem \textup{\ref{g5}}][]
To obtain \eqref{generalization of Entry 5}, divide both sides of \eqref{entry3gen} by $1-b/a$, let $b\to a$, then replace $a$ by $zq$ and simplify. Now, additionally if $|zq|<|c|$, perform the same operations on \eqref{entry3gen00} to obtain \eqref{generalization of Entry 5o}.
\end{proof}

\section{Weighted partition identities}\label{weight}
\subsection{Weighted partition identities resulting from Theorem \ref{gGaravan} and its corollaries}\label{wpigG}
We begin with a lemma which will be used several times in the sequel.
\begin{lemma}\label{d(n) in limit}
\begin{align*}
\sum_{\pi \in \mathcal{P}(n)} \lim_{z \rightarrow 1}  \left( 1 - \frac{1}{z}  \right)^{\nu_{d}(\pi) -1 } = d(n).
\end{align*} 
\end{lemma}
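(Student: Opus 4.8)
The plan is to evaluate the limit inside the sum separately for each partition $\pi$, and then recognize the surviving terms combinatorially. Since for a fixed $n\geq 1$ the index set $\mathcal{P}(n)$ is a finite set, the limit may be taken term by term; there is no issue in interchanging the limit with this finite sum, so the statement reduces to a pointwise evaluation followed by an elementary count.

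First I would compute $\lim_{z\to 1}(1-1/z)^{\nu_d(\pi)-1}$. For any partition $\pi$ of $n\geq 1$ one has $\nu_d(\pi)\geq 1$, so the exponent $\nu_d(\pi)-1$ is a nonnegative integer. As $z\to 1$ the base $1-1/z$ tends to $0$, whence
\begin{equation*}
\lim_{z\to 1}\left(1-\frac{1}{z}\right)^{\nu_d(\pi)-1}=
\begin{cases}
1, & \nu_d(\pi)=1,\\
0, & \nu_d(\pi)\geq 2.
\end{cases}
\end{equation*}
Thus only those $\pi$ having exactly one distinct part size survive the limit, each contributing $1$ to the sum, while every partition with at least two distinct part sizes contributes $0$.

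Next I would identify these surviving partitions. A partition of $n$ with $\nu_d(\pi)=1$ consists of $m$ copies of a single positive integer $j$ with $mj=n$. The map sending such a partition to its common part value $j$ is a bijection onto the set of positive divisors of $n$, since conversely each divisor $j\mid n$ determines exactly the partition into $n/j$ copies of $j$. Hence the number of partitions of $n$ into equal parts is precisely $d(n)$, and the sum in the lemma evaluates to $d(n)$.

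I do not expect any genuine obstacle here: the only point requiring (minimal) care is the justification for passing the limit inside the sum, which is immediate because $\mathcal{P}(n)$ is finite. Everything else is the elementary observation that the factor $(1-1/z)^{\nu_d(\pi)-1}$ annihilates in the limit exactly the partitions with more than one part size, leaving the partitions into equal parts, which are parametrized by the divisors of $n$.
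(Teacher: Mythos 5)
Your proposal is correct and follows essentially the same argument as the paper: evaluate the limit termwise (it equals $1$ when $\nu_{d}(\pi)=1$ and $0$ otherwise) and then observe that partitions of $n$ into copies of a single part are in bijection with the divisors of $n$. The only cosmetic difference is your remark about interchanging limit and sum, which is not even needed since the limit already sits inside the (finite) sum in the statement.
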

\begin{proof}

Note that
\begin{align*}
\lim_{z \rightarrow 1}  \left( 1 - \frac{1}{z}  \right)^{\nu_{d}(\pi) -1 } = \begin{cases}
1 & \quad \mathrm{if} \quad \nu_{d}(\pi) =1,\\
0 & \quad \mathrm{if} \quad \nu_{d}(\pi) >1.
\end{cases}
\end{align*}
Therefore we need to consider only those partitions $\pi$ of $n$ for which $\nu_{d}(\pi) =1$. If $m$ is a divisor of $n$ then we have $ n = m\ell$, for some positive integer $\ell$, so we can write $n$ as 
\begin{equation*}
n=\underbrace{m+m+\cdots+m}_{\ell\hspace{1mm}\text{times}}.
\end{equation*}
Therefore corresponding to every divisor of $n$ we can obtain a partition whose number of parts without multiplicity is $1$ and conversely, any partition $\pi$ of $n$ with $\nu_d(\pi)=1$ corresponds to a  divisor of $n$. This proves the lemma.
\end{proof}
\begin{proof}[Theorem \textup{\ref{gwpi}}][]
To prove that the left-hand side of \eqref{gen of Garavan} is the generating function of that of \eqref{gwpieqn} we first write the former as
\begin{align}\label{first step}
\sum_{n=1}^{\infty} \frac{(-1)^{n-1} z^n q^{\frac{n(n+1)}{2} } }{(1-c q^n) (z q)_n  }   =  \sum_{n=1}^\infty \frac{(-1)^{n-1} z^n q^{1+2+\cdots+n}}{(1-zq)(1-zq^2)\cdots(1-zq^n)(1-c q^n)}
\end{align}
and use
 \begin{align*}
 \frac{1}{1- z q^i } = \sum_{k_i = 0}^\infty z^{k_i} q^{i\, k_i},
 \end{align*}
for $1\leq i \leq n-1$, as well as \cite[p.~214]{agl13} 
\begin{align*}
\frac{q^n}{(1-z q^n) (1- c q^n)} =  \sum_{k_n = 1}^{\infty} \frac{\left(\frac{z}{c} \right)^{k_n} - 1}{ \left(\frac{z}{c} \right)-1} c^{k_n - 1} q^{n\, k_n}
\end{align*}
in the summand of the series on the right-hand side of \eqref{first step}. Thus
\begin{align*}
&\sum_{n=1}^{\infty} \frac{(-1)^{n-1} z^n q^{\frac{n(n+1)}{2} } }{(1-c q^n) (z q)_n  }\nonumber\\
&=\sum_{n=1}^\infty  \sum_{\substack{k_i=0, 1\leq i\leq n-1 \\ k_n =1}}^{\infty} (-1)^{n-1} z^{n +k_1+ k_2 + \cdots + k_{n-1}} c^{k_n -1}  \frac{\left(\frac{z}{c} \right)^{k_n} - 1}{ \left(\frac{z}{c} \right)-1} q^{ \frac{n(n-1)}{2} + k_1 +2 k_2 + \cdots + n k_n }.
\end{align*}
Now write the exponent of $q$ in the form $a_1+a_2\cdots+a_n$ where
\begin{align*}
a_1& := (k_1 +1) + (k_2+1) + \cdots+ (k_{n-1}+1) + k_n \\
a_2 & := (k_2 +1) + (k_3 + 1) + \cdots + (k_{n-1}+1) + k_n \\
& \quad \cdots \cdots\cdots \\
a_{n-1} & : = k_{n-1}+1 + k_n \\
a_n & := k_n.
\end{align*}
Thus $\{ a_1, a_2, \cdots, a_n\} $ is a distinct partition of the exponent of $q$, where the largest part is $a_1$, smallest part is $a_n$ and $n$ is the number of parts. This proves that
\begin{align}\label{distinct part}
\sum_{n=1}^{\infty} \frac{(-1)^{n-1} z^n q^{\frac{n(n+1)}{2} } }{(1-c q^n) (z q)_n  }=\sum_{n=1}^{\infty} \left( \sum_{\pi \in \mathcal{D}(n) } (-1)^{\#(\pi)-1} z^{l(\pi) + 1 -s(\pi)} c^{s(\pi)-1}\frac{\left(\frac{z}{c} \right)^{s(\pi)} - 1}{ \left(\frac{z}{c} \right)-1}  \right) q^n.
\end{align}
Next we write the right-hand side of \eqref{gen of Garavan} in the form
\begin{align*}
\frac{z}{c}\sum_{n=1}^{\infty}\frac{(zq/c)_{n-1}}{(zq)_n}(cq)^n = \frac{z}{c}
\sum_{n=1}^{\infty} \frac{\left(1- \frac{z}{c} q \right)\left(1- \frac{z}{c} q^2 \right) \cdots \left(1- \frac{z}{c} q^{n-1} \right)}{(1- zq) (1- z q^2) \cdots (1- z q^{n-1} ) } \frac{c^n q^n }{(1- z q^n )}.
\end{align*}
For $ 1 \leq i \leq n-1$, we have 
\begin{align*}
\frac{\left(1- \frac{z}{c} q^i \right)}{\left(1- z q^i \right)} = 1 + \sum_{k_i =1}^\infty \left(1- \frac{1}{c} \right) z^{k_i} q^{i\, k_i} = \sum_{k_i=0}^\infty a_{k_i} z^{k_i} q^{i\, k_i},
\end{align*}
where $$ a_{k_i} = \begin{cases} 1 \quad & \quad \mathrm{if}\quad k_i=0, \\
1- \frac{1}{c} \quad & \quad \mathrm{if} \quad k_i \geq 1.
\end{cases}  $$
Furthermore, write
\begin{align*}
\frac{ z q^n}{ 1- z q^n } =  \sum_{k_n=1}^{\infty} z^{k_n} q^{n\, k_n}. 
\end{align*}
Using these series expansions in the above identity, we get
\begin{align*}
\frac{z}{c}\sum_{n=1}^{\infty}\frac{(zq/c)_{n-1}}{(zq)_n}(cq)^n=\sum_{n=1}^{\infty} c^{n-1} \prod_{i=1}^{n-1} \left( \sum_{k_i=0}^\infty a_{k_i} z^{k_i} q^{i\, k_i} \right)   \sum_{k_n=1}^{\infty} z^{k_n} q^{n\, k_n}. 
\end{align*}
Here the typical exponent of $q$ is $k_1 + 2 k_2 + \cdots + n k_n$, where $k_i \geq 0 $ for $1 \leq i \leq n-1$ and $k_n \geq 1$. This corresponds to an ordinary partition of the exponent where, every part $i$, $1 \leq i \leq n$, appears $k_i$ times, the largest part is $n$ and the number of parts is $k_1 + k_2 + \cdots + k_n$. Thus
\begin{align}\label{ordinary partition}
\frac{z}{c}\sum_{n=1}^{\infty}\frac{(zq/c)_{n-1}}{(zq)_n}(cq)^n=\sum_{n=1}^\infty \left( \sum_{\pi \in \mathcal{P}(n)} c^{l(\pi) -1 } \left( 1- \frac{1}{c}  \right)^{\nu_{d} (\pi) -1 } z^{\#(\pi)} \right) q^n. 
\end{align}
From \eqref{gen of Garavan}, \eqref{distinct part} and \eqref{ordinary partition}, we obtain \eqref{gwpieqn}. 
\end{proof}

When $c=z$, Theorems \ref{gGaravan} and \ref{gwpi} give the following corollary.
\begin{corollary}\label{gGaravancez}
For $|z|\leq 1$,
\begin{align}\label{gen of Garavancez}
\sum_{n=1}^{\infty} \frac{(-1)^{n-1} z^n q^{\frac{n(n+1)}{2} } }{(1-z q^n) (z q)_n  } = \sum_{n=1}^{\infty}\frac{(q)_{n-1}}{(zq)_n}(zq)^n.
\end{align}
Hence if $z$ is not a function of $q$,
\begin{align*}
 \sum_{\pi \in \mathcal{D}(n) } (-1)^{\#(\pi)-1} z^{l(\pi)} s(\pi)=\sum_{\pi \in \mathcal{P}(n) } z^{\#(\pi)+l(\pi) -1} \left(1- \frac{1}{z}  \right)^{\nu_d(\pi) -1 }.
\end{align*}
\end{corollary}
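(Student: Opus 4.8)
The final statement to prove is Corollary~\ref{gGaravancez}, which is the $c=z$ specialization of Theorems~\ref{gGaravan} and~\ref{gwpi}. The plan is to obtain the two displayed identities by direct substitution rather than by reproving anything from scratch. For the $q$-series identity \eqref{gen of Garavancez}, I would set $c=z$ in \eqref{gen of Garavan}. On the left-hand side this is immediate. On the right-hand side, the factor $(zq/c)_{n-1}$ becomes $(q)_{n-1}$, the prefactor $z/c$ becomes $1$, and $(cq)^n$ becomes $(zq)^n$, so the right-hand side collapses to $\sum_{n=1}^{\infty}\tfrac{(q)_{n-1}}{(zq)_n}(zq)^n$ exactly as claimed. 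The only point requiring a word of care is that Theorem~\ref{gGaravan} was stated for $|c|\le 1$ and $|zq|<1$; here $c=z$, so I would note that the hypothesis $|z|\le 1$ together with $|q|<1$ keeps both conditions satisfied, which is why the corollary is stated under $|z|\le 1$.

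For the partition identity, the cleanest route is to specialize the general weighted identity \eqref{gwpieqn} of Theorem~\ref{gwpi} at $c=z$. The right-hand side of \eqref{gwpieqn} is $\sum_{\pi\in\mathcal P(n)} z^{\#(\pi)} c^{l(\pi)-1}(1-\tfrac1c)^{\nu_d(\pi)-1}$; putting $c=z$ immediately gives $\sum_{\pi\in\mathcal P(n)} z^{\#(\pi)+l(\pi)-1}(1-\tfrac1z)^{\nu_d(\pi)-1}$, matching the stated right-hand side. The left-hand side is the more delicate piece, since the weight on $\mathcal D(n)$ in \eqref{gwpieqn} is
\[
(-1)^{\#(\pi)-1} z^{l(\pi)+1-s(\pi)} c^{s(\pi)-1}\,\frac{(z/c)^{s(\pi)}-1}{(z/c)-1},
\]
and at $c=z$ both the factor $(z/c)^{s(\pi)}-1$ and the denominator $(z/c)-1$ vanish, producing an indeterminate $0/0$. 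I would resolve this by the elementary evaluation $\lim_{w\to 1}\tfrac{w^{s}-1}{w-1}=s$ with $w=z/c$; thus the fraction tends to $s(\pi)$. Simultaneously $c^{s(\pi)-1}=z^{s(\pi)-1}$ and $z^{l(\pi)+1-s(\pi)}$ combine to $z^{l(\pi)}$, so the whole weight reduces to $(-1)^{\#(\pi)-1} z^{l(\pi)} s(\pi)$, giving the left-hand side as stated.

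The main obstacle, such as it is, is this $0/0$ evaluation: one must take the limit $c\to z$ inside the finite sum over $\mathcal D(n)$ rather than setting $c=z$ naively. Since for each fixed $n$ the set $\mathcal D(n)$ is finite, passing the limit inside is automatic and no convergence issue arises, so the only thing to record is the identity $\tfrac{w^{s}-1}{w-1}\to s$ as $w\to 1$. I would therefore present the proof as: substitute $c=z$ in \eqref{gen of Garavan} to get \eqref{gen of Garavancez}; then take $c\to z$ in the weighted identity \eqref{gwpieqn}, using the limit above termwise on the (finite) distinct-partition side and direct substitution on the ordinary-partition side, to obtain the second displayed equation. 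This keeps the argument to a few lines and avoids any recomputation of the generating-function expansions already carried out in the proof of Theorem~\ref{gwpi}.
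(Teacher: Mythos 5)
Your proposal is correct and matches the paper's route exactly: the paper obtains this corollary precisely by setting $c=z$ in Theorems \ref{gGaravan} and \ref{gwpi}, with the hypotheses $|zq|<1$, $|c|\leq 1$ automatically satisfied when $|z|\leq 1$. Your limit evaluation $\lim_{w\to 1}\frac{w^{s}-1}{w-1}=s$ is the right way to handle the only delicate point; equivalently one may note that $c^{s(\pi)-1}\frac{(z/c)^{s(\pi)}-1}{(z/c)-1}=\frac{z^{s(\pi)}-c^{s(\pi)}}{z-c}$ is a polynomial in $c$ (the geometric sum underlying the definition of $\textup{FFW}(c,n)$), so the substitution $c=z$ is legitimate termwise.
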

We note that \eqref{gen of Garavancez} can also be obtained by letting $c=0$ in the first equality of Theorem \ref{g5}.

The above corollary, in turn, specializes to Kluyver's identity and the result of Fokkink, Fokkink and Wang:
\begin{corollary}\label{gGaravancez1}
The identities \eqref{Uchimura} and \eqref{ffwidty} hold.
\end{corollary}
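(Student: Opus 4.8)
The plan is to specialize Corollary \ref{gGaravancez} at $z=1$. For Kluyver's identity \eqref{Uchimura}, I would set $z=1$ directly in the analytic identity \eqref{gen of Garavancez}; this is legitimate since $z=1$ lies in the admissible range $|z|\le 1$ and both resulting series converge for $|q|<1$. The left-hand side becomes
\begin{equation*}
\sum_{n=1}^{\infty} \frac{(-1)^{n-1} q^{\frac{n(n+1)}{2}}}{(1-q^n)(q)_n},
\end{equation*}
which is exactly the left-hand side of \eqref{Uchimura}. On the right-hand side I would use the factorization $(q)_n=(1-q^n)(q)_{n-1}$ to write $\tfrac{(q)_{n-1}}{(q)_n}(zq)^n$ at $z=1$ as $\tfrac{q^n}{1-q^n}$, so that the sum collapses to $\sum_{n=1}^{\infty} \tfrac{q^n}{1-q^n}$, the right-hand side of \eqref{Uchimura}. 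This proves Kluyver's identity.

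For the Fokkink--Fokkink--Wang identity \eqref{ffwidty}, I would instead pass to the limit $z\to 1$ in the weighted partition identity of Corollary \ref{gGaravancez}, namely
\begin{equation*}
\sum_{\pi \in \mathcal{D}(n)} (-1)^{\#(\pi)-1} z^{l(\pi)} s(\pi) = \sum_{\pi \in \mathcal{P}(n)} z^{\#(\pi)+l(\pi)-1} \left(1-\tfrac{1}{z}\right)^{\nu_d(\pi)-1}.
\end{equation*}
On the left-hand side the limit is harmless: for each fixed $n$ the sum over $\mathcal{D}(n)$ is finite and each summand is a monomial in $z$, so setting $z=1$ produces $\sum_{\pi\in\mathcal{D}(n)}(-1)^{\#(\pi)-1}s(\pi)$, the left-hand side of \eqref{ffwidty}.

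The only delicate point is the right-hand side. As $z\to 1$ the factor $z^{\#(\pi)+l(\pi)-1}$ tends to $1$, while $\left(1-\tfrac{1}{z}\right)^{\nu_d(\pi)-1}$ tends to $1$ when $\nu_d(\pi)=1$ and to $0$ when $\nu_d(\pi)>1$; this is precisely the termwise limit recorded in Lemma \ref{d(n) in limit}. Since the sum over $\mathcal{P}(n)$ is finite for each $n$, I may interchange the limit with the sum, obtaining $\sum_{\pi\in\mathcal{P}(n)}\lim_{z\to1}\left(1-\tfrac{1}{z}\right)^{\nu_d(\pi)-1}=d(n)$ by Lemma \ref{d(n) in limit}. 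Equating the two limits then yields \eqref{ffwidty}. The main, and essentially only, obstacle is this interchange of the limit with the finite partition sum on the right, which is justified exactly by Lemma \ref{d(n) in limit} supplying the needed termwise values.
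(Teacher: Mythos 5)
Your proposal is correct and follows exactly the paper's route: the paper likewise obtains \eqref{Uchimura} by setting $z=1$ in the analytic identity \eqref{gen of Garavancez} of Corollary \ref{gGaravancez}, and \eqref{ffwidty} by letting $z\to 1$ in its weighted partition identity, invoking Lemma \ref{d(n) in limit} to evaluate the right-hand side as $d(n)$. The only difference is one of emphasis — the paper states this in a single sentence, while you spell out the telescoping simplification $\frac{(q)_{n-1}}{(q)_n}q^n=\frac{q^n}{1-q^n}$ and the (trivially justified) termwise limit over the finite sum — but the substance is identical.
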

\begin{proof}
Let $z=1$ in Corollary \ref{gGaravancez}, and employ Lemma \ref{d(n) in limit} to get the second equality.
\end{proof}
Corollary \ref{gGaravancez} also gives a new weighted partition identity.
\begin{corollary}\label{gGaravancez-1}
We have
\begin{align*}
\sum_{n=1}^{\infty} \frac{q^{\frac{n(n+1)}{2} } }{(1+ q^n) (-q)_n  } = \sum_{n=1}^{\infty}\frac{(-1)^{n-1}(q)_{n-1}}{(-q)_n}q^n.
\end{align*}
Hence,
\begin{align}\label{rankspi}
 \sum_{\pi \in \mathcal{D}(n)} (-1)^{\mathrm{rank}(\pi)} \, s(\pi) = \sum_{\pi\in\mathcal{P}(n) }  (-1)^{\mathrm{rank}(\pi)} \, 2^{{\nu_{d}(\pi)}-1}.
\end{align}
\end{corollary}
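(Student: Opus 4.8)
The plan is to obtain both assertions simply by setting $z=-1$ in Corollary \ref{gGaravancez}, which is permissible since that result holds on the whole disk $|z|\leq 1$ and its weighted-partition version holds for any constant $z$.

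First I would substitute $z=-1$ into the series identity \eqref{gen of Garavancez}. On the left-hand side the numerator factor becomes $(-1)^{n-1}(-1)^n=(-1)^{2n-1}=-1$, the factor $1-zq^n$ becomes $1+q^n$, and $(zq)_n$ becomes $(-q)_n$; on the right-hand side $(zq)^n=(-1)^nq^n$ while $(zq)_n=(-q)_n$. Hence \eqref{gen of Garavancez} collapses to
\begin{equation*}
-\sum_{n=1}^{\infty}\frac{q^{n(n+1)/2}}{(1+q^n)(-q)_n}=\sum_{n=1}^{\infty}\frac{(-1)^n(q)_{n-1}q^n}{(-q)_n},
\end{equation*}
and multiplying through by $-1$ yields the claimed analytic identity.

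For the partition identity, I would evaluate the second (weighted-partition) identity of Corollary \ref{gGaravancez} at $z=-1$. The only real work is the sign bookkeeping. On the left, the weight $(-1)^{\#(\pi)-1}z^{l(\pi)}$ becomes $(-1)^{\#(\pi)+l(\pi)-1}$; since $(-1)^{2\#(\pi)}=1$ we have $(-1)^{\#(\pi)+l(\pi)}=(-1)^{l(\pi)-\#(\pi)}=(-1)^{\mathrm{rank}(\pi)}$, so this weight equals $-(-1)^{\mathrm{rank}(\pi)}$. On the right, the same computation gives $z^{\#(\pi)+l(\pi)-1}=-(-1)^{\mathrm{rank}(\pi)}$, while $(1-1/z)^{\nu_d(\pi)-1}$ becomes $2^{\nu_d(\pi)-1}$. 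Cancelling the common factor $-1$ from both sides produces exactly \eqref{rankspi}.

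I expect no genuine obstacle beyond this sign tracking. The one point that might look delicate, the boundary value $z=-1$, causes no trouble: Corollary \ref{gGaravancez} is already stated for $|z|\leq 1$, and for each fixed $n$ its weighted identity is an identity of rational functions in the constant $z$ with a possible pole only at $z=0$, so it may be specialized freely at $z=-1$, where the factor $1-1/z$ is simply $2$.
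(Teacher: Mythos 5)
Your proposal is correct and is essentially identical to the paper's own proof: the authors likewise set $z=-1$ in Corollary \ref{gGaravancez} and use $(-1)^{\#(\pi)+l(\pi)}=(-1)^{l(\pi)-\#(\pi)}=(-1)^{\mathrm{rank}(\pi)}$ to obtain \eqref{rankspi}. Your sign bookkeeping and the remark that specializing at the boundary point $z=-1$ is legitimate are both accurate, so nothing is missing.
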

\begin{proof}
Let $z=-1$ in Corollary \ref{gGaravancez}, and to obtain \eqref{rankspi}, use $(-1)^{\#(\pi)+l(\pi)}=(-1)^{l(\pi)-\#(\pi)}=(-1)^{\mathrm{rank}(\pi)}$.
\end{proof}
\begin{example}
Let $n=5$. In the first table below, we list the partitions of $n$ into distinct parts, and in the other, all partitions.

\begin{tabular}{|c|c|c|c|}
\hline 
$\pi \in \mathcal{D}_5$ & $\mathrm{rank(\pi)}$ & $(-1)^{\mathrm{rank}(\pi)} s(\pi)$  \\
\hline
$5$ & $4$ & $5$ \\
\hline
$4+1$ & $2$ & $1$ \\
\hline
$3+2$ & $1$ & $-2$ \\
\hline
\end{tabular}
\begin{tabular}{|c|c|c|}
\hline 
$\pi \in \mathcal{P}(5)$ & $\mathrm{rank(\pi)}$ & $(-1)^{\mathrm{rank}(\pi)}2^{\nu_d(\pi)-1} $   \\
\hline
$5$ & $4$ & $1$ \\
\hline
$4+1$ & $2$ & $2$ \\
\hline
$3+2$ & $1$ & $-2$ \\
\hline
$3+1+1$ & $0$ & $2$ \\
\hline 
$2+2+1$ & $-1$ &  $-2$ \\
\hline
$2+1+1+1$ & $-2$ & $2$ \\
\hline
$1+1+1+1+1+1$ & $-4$ & $1$ \\
\hline
\end{tabular}
It can be checked that the sum of the last columns in each of the tables is equal to $4$, which verifies \eqref{rankspi} for $n=5$. 
\end{example}
Similarly when $c=-z$, Theorems \ref{gGaravan} and \ref{gwpi} result in the following.
\begin{corollary}\label{gGaravance-z}
For $|z|\leq 1$,
\begin{align}\label{gen of Garavance-z}
\sum_{n=1}^{\infty} \frac{(-1)^{n-1} z^n q^{\frac{n(n+1)}{2} } }{(1+zq^n) (z q)_n  } = \sum_{n=1}^{\infty}\frac{(-1)^{n-1}(-q)_{n-1}}{(zq)_n}(zq)^n.
\end{align}
Hence if $z$ is not a function of $q$,
\begin{align}\label{gen of Garavance-zpi}
\sum_{\pi \in \mathcal{D}(n) \atop s(\pi)\hspace{0.5mm}\text{odd}} (-1)^{\#(\pi)-1}z^{l(\pi)}=\sum_{\pi\in\mathcal{P}(n)}(-1)^{l(\pi)-1}z^{\#(\pi)+l(\pi)-1}\left(1+\frac{1}{z}\right)^{\nu_d(\pi)-1}.
\end{align}
\end{corollary}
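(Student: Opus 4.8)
The plan is to obtain both assertions purely by specializing the already-established identities of Theorems \ref{gGaravan} and \ref{gwpi} to the choice $c=-z$; no new machinery is required. First I would substitute $c=-z$ directly into \eqref{gen of Garavan}. The left-hand side is immediate, since $1-cq^n=1+zq^n$ turns it into the left-hand side of \eqref{gen of Garavance-z}. For the right-hand side, the prefactor $z/c$ collapses to $-1$, the argument $zq/c$ inside the Pochhammer symbol becomes $-q$, and $(cq)^n=(-zq)^n=(-1)^n z^n q^n$; absorbing the sign $(-1)^n$ into the overall factor $-1$ produces $(-1)^{n-1}$, giving exactly $\sum_{n=1}^{\infty}(-1)^{n-1}(-q)_{n-1}(zq)^n/(zq)_n$. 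This establishes \eqref{gen of Garavance-z}.

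For the weighted partition identity I would set $c=-z$ in \eqref{gwpieqn} and simplify the two sides separately. The essential simplification is on the left: with $c=-z$ the ratio $z/c$ equals $-1$, so the weight factor $\bigl((z/c)^{s(\pi)}-1\bigr)/\bigl((z/c)-1\bigr)$ becomes $\bigl((-1)^{s(\pi)}-1\bigr)/(-2)$, which equals $1$ when $s(\pi)$ is odd and vanishes when $s(\pi)$ is even. This is precisely the mechanism that installs the restriction $s(\pi)$ odd on the summation. For such surviving partitions the factor $c^{s(\pi)-1}=(-z)^{s(\pi)-1}=z^{s(\pi)-1}$, since $s(\pi)-1$ is even; combining this with $z^{l(\pi)+1-s(\pi)}$ leaves simply $z^{l(\pi)}$, which yields the left-hand side of \eqref{gen of Garavance-zpi}. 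On the right-hand side, I would write $c^{l(\pi)-1}=(-z)^{l(\pi)-1}=(-1)^{l(\pi)-1}z^{l(\pi)-1}$ and $1-1/c=1+1/z$, and then collect powers of $z$ to reproduce the right-hand side of \eqref{gen of Garavance-zpi}.

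The only point requiring genuine care is the parity analysis of $\bigl((-1)^{s(\pi)}-1\bigr)/(-2)$ together with the matching observation that the induced sign $(-1)^{s(\pi)-1}$ is trivial precisely on the terms that survive (those with $s(\pi)$ odd); the rest is routine algebraic bookkeeping with signs and powers. I therefore expect no substantive obstacle, as this is a clean specialization of two identities whose proofs are already in hand.
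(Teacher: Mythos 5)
Your proposal is correct and is exactly the paper's proof: both identities are obtained by the specialization $c=-z$ in Theorems \ref{gGaravan} and \ref{gwpi}, with the factor $\bigl((z/c)^{s(\pi)}-1\bigr)/\bigl((z/c)-1\bigr)$ becoming the indicator of $s(\pi)$ odd. Your sign and exponent bookkeeping (in particular $(-z)^{s(\pi)-1}=z^{s(\pi)-1}$ on the surviving terms) matches what the paper leaves implicit.
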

\begin{proof}
Equation \eqref{gen of Garavance-z} is immediate after we let $c=-z$ in Theorem \ref{gGaravan}. To obtain \eqref{gen of Garavance-zpi}, let $c=-z$ in Theorem \ref{gwpi} and note that the resulting left-hand side is non-zero only when $s(\pi)$ is odd.
\end{proof}
Now if we let $z=1$ in the above corollary, we obtain
\begin{corollary}
\begin{align*}
\sum_{n=1}^{\infty} \frac{(-1)^{n-1} q^{\frac{n(n+1)}{2} } }{(1+q^n) (q)_n  } = \sum_{n=1}^{\infty}\frac{(-1)^{n-1}(-q)_{n-1}}{(q)_n}q^n.
\end{align*}
Hence
\begin{align}\label{alla}
\sum_{\pi \in \mathcal{D}(n) \atop s(\pi)\hspace{0.5mm}\textup{odd}} (-1)^{\#(\pi)-1}=\sum_{\pi\in\mathcal{P}(n)}(-1)^{l(\pi)-1}2^{\nu_d(\pi)-1}.
\end{align}
\end{corollary}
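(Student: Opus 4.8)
The plan is to derive this corollary as the $z=1$ specialization of the immediately preceding Corollary \ref{gGaravance-z}, whose hypothesis $|z|\leq 1$ already permits $z=1$. First I would set $z=1$ in the analytic identity \eqref{gen of Garavance-z}: on the left-hand side $z^n\to 1$, $(1+zq^n)\to(1+q^n)$ and $(zq)_n\to(q)_n$, while on the right-hand side $(zq)_n\to(q)_n$ and $(zq)^n\to q^n$. This immediately yields
\begin{equation*}
\sum_{n=1}^{\infty} \frac{(-1)^{n-1} q^{\frac{n(n+1)}{2} } }{(1+q^n) (q)_n  } = \sum_{n=1}^{\infty}\frac{(-1)^{n-1}(-q)_{n-1}}{(q)_n}q^n,
\end{equation*}
the first asserted identity. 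Because $|q|<1$, the conditions $|zq|<1$ and $|z|\leq 1$ hold at $z=1$, so all series converge and no analytic difficulty arises.

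For the weighted partition identity \eqref{alla}, I would set $z=1$ in the companion partition identity \eqref{gen of Garavance-zpi}. For each fixed $n$, both sides of \eqref{gen of Garavance-zpi} are finite sums over $\mathcal{D}(n)$ and $\mathcal{P}(n)$, so they form a genuine identity in the variable $z$ into which we may substitute $z=1$. On the left-hand side the weight $z^{l(\pi)}$ collapses to $1$, leaving $\sum_{\pi\in\mathcal{D}(n),\, s(\pi)\ \mathrm{odd}}(-1)^{\#(\pi)-1}$. On the right-hand side $z^{\#(\pi)+l(\pi)-1}$ collapses to $1$ and the factor $\left(1+\tfrac{1}{z}\right)^{\nu_d(\pi)-1}$ becomes $2^{\nu_d(\pi)-1}$, leaving $\sum_{\pi\in\mathcal{P}(n)}(-1)^{l(\pi)-1}2^{\nu_d(\pi)-1}$. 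Equating the two sides produces \eqref{alla}.

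The argument presents no genuine obstacle; it is a clean specialization. The only point worth noting is that $z=1$ sits on the boundary of the disk $|z|\leq 1$, so one should confirm that both the series identity and the per-coefficient partition identity remain valid there. The former is fine since every series converges for $|q|<1$, and the latter is automatic because each coefficient of $q^n$ in \eqref{gen of Garavance-zpi} is a finite Laurent polynomial in $z$ (the only negative powers would come from the $\left(1+\tfrac{1}{z}\right)^{\nu_d(\pi)-1}$ factor, and they are more than offset by $z^{\#(\pi)+l(\pi)-1}$ since $\#(\pi)\geq\nu_d(\pi)$), which is perfectly regular at $z=1$.
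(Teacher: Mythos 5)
Your proof is correct and is exactly the paper's own argument: the paper derives this corollary simply by setting $z=1$ in Corollary \ref{gGaravance-z}, both in the series identity \eqref{gen of Garavance-z} and in the partition identity \eqref{gen of Garavance-zpi}. Your additional remarks on convergence at the boundary point $z=1$ and on the per-coefficient polynomial identity are sound but not needed beyond what the paper implicitly assumes.
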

We note that in Andrews, Garvan and Liang's notation in \eqref{ffwcn}, the left-hand side of the above weighted partition identity is nothing but $\textup{FFW}(-1,n)$. As mentioned in \cite[Remark 3.7]{agl13}, Alladi \cite[Theorem 2]{alladipartialram} has obtained a simpler representation for the left-hand side of \eqref{alla}, namely, it is equal to $(-1)^{\sqrt{n}-1}$ if $n$ is a square and zero otherwise.

Lastly if we let $z\to-1$ in Corollary \ref{gGaravance-z}, we obtain
\begin{corollary}
\begin{align}\label{gen of Garavance-z-1}
\sum_{n=1}^{\infty} \frac{q^{\frac{n(n+1)}{2} } }{(1-q^n) (-q)_n  } = \sum_{n=1}^{\infty}\frac{q^n}{1+q^n}.
\end{align}
Hence if $d_{e}(n)$ and $d_{o}(n)$ denote the number of even divisors and the number of odd divisors of $n$ respectively,
\begin{align}\label{gen of Garavance-z-1pi}
\sum_{ \pi \in \mathcal{D}(n) \atop s(\pi)\, \mathrm{odd}  } (-1)^{\mathrm{rank}(\pi)-1}=d_{e}(n) - d_{o}(n).
\end{align}
\end{corollary}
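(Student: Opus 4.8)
The plan is to follow the route indicated in the statement and specialize Corollary~\ref{gGaravance-z} at $z=-1$, which is legitimate since that corollary is valid for $|z|\le 1$. First I would substitute $z=-1$ directly into the analytic identity \eqref{gen of Garavance-z}. On the left-hand side the sign factors collapse via $(-1)^{n-1}(-1)^n=(-1)^{2n-1}=-1$ and the denominator becomes $(1-q^n)(-q)_n$, leaving $-\sum_{n=1}^{\infty} q^{n(n+1)/2}/\bigl((1-q^n)(-q)_n\bigr)$. On the right-hand side the same sign collapse occurs, and the key simplification is the telescoping relation $(-q)_n=(1+q^n)(-q)_{n-1}$, so that $(-q)_{n-1}/(-q)_n = 1/(1+q^n)$ and the right-hand side becomes $-\sum_{n=1}^{\infty} q^n/(1+q^n)$. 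Cancelling the common factor $-1$ yields exactly \eqref{gen of Garavance-z-1}.

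For the combinatorial identity \eqref{gen of Garavance-z-1pi}, I would set $z=-1$ in the weighted partition identity \eqref{gen of Garavance-zpi}. On the left, the exponent bookkeeping gives $(-1)^{\#(\pi)-1}(-1)^{l(\pi)}=(-1)^{l(\pi)+\#(\pi)-1}=(-1)^{l(\pi)-\#(\pi)-1}=(-1)^{\mathrm{rank}(\pi)-1}$, where the middle step uses that $2\#(\pi)$ is even; this reproduces precisely $\sum_{\pi\in\mathcal{D}(n),\,s(\pi)\text{ odd}}(-1)^{\mathrm{rank}(\pi)-1}$. The right-hand side is where the only real care is needed: the weight $\left(1+\tfrac1z\right)^{\nu_d(\pi)-1}$ tends to $0^{\nu_d(\pi)-1}$ as $z\to-1$, which vanishes whenever $\nu_d(\pi)>1$ and equals $1$ precisely when $\nu_d(\pi)=1$. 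Hence only partitions with a single distinct part value survive, namely those consisting of $\ell$ copies of a fixed $m$ with $m\ell=n$, one such partition for each divisor $m\mid n$.

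For such a partition $l(\pi)=m$ and $\#(\pi)=n/m$, so the surviving sign is $(-1)^{l(\pi)-1}(-1)^{\#(\pi)+l(\pi)-1}=(-1)^{2l(\pi)+\#(\pi)-2}=(-1)^{\#(\pi)}=(-1)^{n/m}$. Summing over the divisors $m\mid n$ and re-indexing by $d=n/m$ gives $\sum_{d\mid n}(-1)^{d}=d_e(n)-d_o(n)$, since $(-1)^d$ equals $+1$ on even divisors and $-1$ on odd divisors; this is the right-hand side of \eqref{gen of Garavance-z-1pi}. The main (and rather mild) obstacle is the termwise passage to the limit $z\to-1$ in the right-hand side of the finite partition sum, where one must correctly invoke the convention $0^{0}=1$ so that the $\nu_d(\pi)=1$ terms are retained while all $\nu_d(\pi)>1$ terms are annihilated; everything else is routine sign and Pochhammer manipulation. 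As a consistency check one can also read \eqref{gen of Garavance-z-1pi} off directly as the coefficient of $q^n$ in the analytic identity \eqref{gen of Garavance-z-1}, but substituting $z=-1$ into \eqref{gen of Garavance-zpi} is the cleaner path.
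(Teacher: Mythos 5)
Your proof is correct and takes essentially the same route as the paper: the analytic identity \eqref{gen of Garavance-z-1} is obtained by specializing \eqref{gen of Garavance-z} at $z=-1$ (using $(-q)_{n-1}/(-q)_n = 1/(1+q^n)$), and the partition identity \eqref{gen of Garavance-z-1pi} comes from letting $z\to-1$ in \eqref{gen of Garavance-zpi}, where the weight $\left(1+\tfrac{1}{z}\right)^{\nu_d(\pi)-1}$ kills all partitions except those with $\nu_d(\pi)=1$, whose surviving signs sum to $\sum_{d\mid n}(-1)^d = d_e(n)-d_o(n)$. The only cosmetic difference is that you substitute $z=-1$ directly while the paper phrases it as a limit in the spirit of its Lemma on $d(n)$; both are valid since everything is continuous at $z=-1$ and the partition sums are finite.
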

\begin{proof}
Letting $z\to -1$ in \eqref{gen of Garavance-z} immediately results in \eqref{gen of Garavance-z-1}. To obtain \eqref{gen of Garavance-z-1pi}, let $z\to -1$ in \eqref{gen of Garavance-zpi} and observe that analogous to the result in Lemma \ref{d(n) in limit}, the following limit holds:
\begin{equation*}
\lim_{z\to -1}\sum_{\pi\in\mathcal{P}(n)}z^{\#(\pi)}\left(1+\frac{1}{z}\right)^{\nu_d(\pi)-1}=d_{e}(n) - d_{o}(n).
\end{equation*}
\end{proof}
\begin{example}
Suppose $n=10$. Below we list the distinct partitions of $10$ such that their smallest parts are odd.
\begin{center}
\begin{tabular}{|c|c|c|}
\hline 
$\pi$ & $\mathrm{rank(\pi)}$ & $(-1)^{\mathrm{rank}(\pi)}$ \\
\hline
$9+1$ & $7$ & $-1$ \\
\hline
$7+3$ & $5$ & $-1$ \\
\hline
$7+2+1$ & $4$ & $+1$ \\
\hline
$6+3+1$ & $3$ & $-1$ \\
\hline
$5+4+1$ & $2$ & $+1$ \\
\hline
$4 +3 +2+1$ & $0$ & $+1$ \\
\hline
\end{tabular}

\end{center}
It can be easily checked that $d_{o}(10)= d_{e}(10)=2$ and that the sum of the last column equals zero. This verifies \eqref{gen of Garavance-z-1pi} for $n=10$. 
\end{example}
\begin{remark}
In all of the weighted partition representations of expressions in the above theorems involving partitions into distinct parts, one can replace $\mathcal{D}(n)$ by $\mathcal{P}^{*}(n)$ and interchange $l(\pi)$ with $\#(\pi)$, and $s(\pi)$ with $L(\pi)$. This simply follows from conjugation of partitions. See, for example, Ando's representation \cite[p.~2]{andoelec} of the left-hand side \eqref{ffwidty}.
\end{remark}
\subsection{Weighted partition identities resulting from Theorem \ref{g5} and its corollaries}\label{wpig5}
The details of its derivation of Theorem \ref{g5wpi} from Theorem \ref{g5} are similar to that of Theorem \ref{gwpi} from Theorem \ref{gGaravan}, hence a proof is omitted. 

Note that letting $c=z$ in \eqref{generalization of Entry 5} does not produce anything interesting, however, $c=-z$ gives the following result.
\begin{corollary}\label{cequal-z}
For $|z|\leq 1$,
\begin{align*}
\sum_{n=1}^{\infty}   \frac{(q)_{n-1} z^nq^n }{ (1 + z q^n) (zq)_n } &=z\sum_{n=1}^{\infty}\frac{(-q)_{n-1}}{(zq)_n}\frac{(-z)^{n-1}q^n}{1-zq^{n}}.
\end{align*}
Hence 
\begin{align*}
\sum_{\pi \in \mathcal{P}(n) \atop L(\pi)\hspace{1mm}\text{odd}} z^{l(\pi)+ \#(\pi) - 1} \left(1- \frac{1}{z}\right)^{\nu_d(\pi)-1} = \sum_{\pi \in \mathcal{P}(n) } (-z)^{l(\pi)-1} z^{\#(\pi)} \left(1+ \tfrac{1}{z}\right)^{\nu_d(\pi)-1} L(\pi). 
\end{align*}
\end{corollary}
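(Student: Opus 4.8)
The plan is to obtain both assertions as the specialization $c=-z$ of results already established for general $c$, so that no new machinery is needed beyond Theorems \ref{g5} and \ref{g5wpi}.

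First I would derive the analytic identity by setting $c=-z$ directly in \eqref{generalization of Entry 5}. On the left-hand side the factor $1-cq^n$ becomes $1+zq^n$, yielding the claimed left-hand side verbatim. On the right-hand side I would track the two occurrences of $c$: the base shift $zq/c$ collapses to $zq/(-z)=-q$, so $(zq/c)_{n-1}$ becomes $(-q)_{n-1}$, while $c^{n-1}$ becomes $(-z)^{n-1}$. The remaining factors $q^n/((zq)_n(1-zq^n))$ and the overall $z$ are untouched, producing exactly the stated series. Since $|z|\le 1$ forces $|c|=|z|\le 1$, the hypothesis of Theorem \ref{g5} is still met, so the substitution is legitimate.

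For the weighted partition identity I would start from the identity of Theorem \ref{g5wpi} and again put $c=-z$. The right-hand side is immediate: $c^{l(\pi)-1}=(-z)^{l(\pi)-1}$ and $\left(1-\tfrac{1}{c}\right)^{\nu_d(\pi)-1}=\left(1+\tfrac{1}{z}\right)^{\nu_d(\pi)-1}$, matching the claim. The crux, and the only place where anything genuinely happens, is the factor $\left(z^{L(\pi)}-c^{L(\pi)}\right)/(z-c)$ on the left. I would treat this as the polynomial $\sum_{j=0}^{L(\pi)-1}z^j c^{L(\pi)-1-j}$ (the finite geometric sum), so that $c=-z$ is a clean substitution rather than a limit. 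Evaluating at $c=-z$ gives $z^{L(\pi)-1}\sum_{i=0}^{L(\pi)-1}(-1)^i$, and this alternating sum equals $0$ when $L(\pi)$ is even and $1$ when $L(\pi)$ is odd. Hence the sum collapses to partitions with $L(\pi)$ odd, and on those the surviving power $z^{L(\pi)-1}$ combines with $z^{l(\pi)+\#(\pi)-L(\pi)}$ to produce $z^{l(\pi)+\#(\pi)-1}$, which is precisely the stated left-hand side.

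The main, and quite mild, obstacle is simply recognizing that the apparent singularity from $z-c$ in the denominator is illusory: because the numerator $z^{L(\pi)}-c^{L(\pi)}$ is divisible by $z-c$ as a polynomial, the quotient is regular at $c=-z$, where $z-c=2z$, and the parity dichotomy above emerges without any limiting argument. Once this is observed, the entire corollary reduces to a pair of direct substitutions.
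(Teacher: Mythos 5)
Your proof is correct and follows exactly the route the paper intends: Corollary \ref{cequal-z} is obtained there by the direct specialization $c=-z$ in Theorem \ref{g5} and Theorem \ref{g5wpi}, with the factor $\bigl(z^{L(\pi)}-c^{L(\pi)}\bigr)/(z-c)$ collapsing by parity to $z^{L(\pi)-1}$ for $L(\pi)$ odd and to $0$ otherwise, just as you argue. Your observation that this quotient is a polynomial in $c$, so no limiting argument is needed, is precisely the implicit justification in the paper.
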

Let $z=1$ in Theorems \ref{g5} and \ref{g5wpi}. This gives
\begin{corollary}
For $|c|\leq 1$, we have
\begin{align*}
\sum_{n=1}^{\infty}   \frac{ q^n }{ (1-c q^n) (1-q^n) } = \sum_{n=1}^{\infty}\frac{(q/c)_{n-1}}{(q)_n}\frac{c^{n-1}q^n}{1-q^{n}}.
\end{align*}
Hence if $c$ is not a function of $q$,
\begin{align*}
\sum_{d|n} \frac{c^d -1 }{c-1} = \sum_{\pi \in \mathcal{P}(n)} c^{l(\pi) -1}  \left(1- \frac{1}{c}\right)^{\nu_d(\pi)-1} L(\pi). 
\end{align*}
\end{corollary}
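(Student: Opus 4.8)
The plan is to derive both assertions by specializing the $z$-dependent results at $z=1$: the analytic identity comes straight from Theorem \ref{g5}, and the partition identity from Theorem \ref{g5wpi} via a limit argument.

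First I would set $z=1$ in \eqref{generalization of Entry 5}. On the left the summand $(q)_{n-1}z^nq^n/[(1-cq^n)(zq)_n]$ becomes $(q)_{n-1}q^n/[(1-cq^n)(q)_n]$, and since $(q)_n=(q)_{n-1}(1-q^n)$ the ratio $(q)_{n-1}/(q)_n$ collapses to $1/(1-q^n)$, producing $\sum_{n=1}^{\infty}q^n/[(1-cq^n)(1-q^n)]$. On the right the prefactor $z$ becomes $1$ and each summand is continuous at $z=1$, so it becomes $\sum_{n=1}^{\infty}(q/c)_{n-1}c^{n-1}q^n/[(q)_n(1-q^n)]$. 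Since $|q|<1$, the hypothesis $|zq|<1$ holds at $z=1$, so no convergence issue intervenes and the first displayed identity follows at once.

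For the partition identity I would put $z=1$ in Theorem \ref{g5wpi} and pass to the limit $z\to 1$ termwise. The right-hand side is harmless: $z^{\#(\pi)}\to 1$ and every other factor is free of $z$, leaving exactly $\sum_{\pi\in\mathcal{P}(n)}c^{l(\pi)-1}(1-1/c)^{\nu_{d}(\pi)-1}L(\pi)$, the claimed right side. The content sits on the left-hand side, where the factor $(1-\tfrac{1}{z})^{\nu_{d}(\pi)-1}$ behaves exactly as in Lemma \ref{d(n) in limit}, tending to $1$ if $\nu_{d}(\pi)=1$ and to $0$ otherwise; hence only partitions with a single distinct part survive in the limit.

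For such a $\pi$, namely the part $d$ repeated $\ell$ times with $n=d\ell$, one has $l(\pi)=d$, $\#(\pi)=L(\pi)=\ell$, and $\nu_{d}(\pi)=1$, so $z^{l(\pi)+\#(\pi)-L(\pi)}=z^{d}\to 1$ while $\tfrac{z^{L(\pi)}-c^{L(\pi)}}{z-c}\to\tfrac{1-c^{\ell}}{1-c}=\tfrac{c^{\ell}-1}{c-1}$. Summing over all factorizations $n=d\ell$ and observing that the exponent $\ell$ then runs precisely over the divisors of $n$ (the same bijection used in Lemma \ref{d(n) in limit}), the left side becomes $\sum_{d\mid n}\tfrac{c^{d}-1}{c-1}$, as required. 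I expect the one delicate point to be this limiting step: one must check that the surviving terms are exactly the single-distinct-part partitions and read off $L(\pi)=\ell$ correctly, so that the telescoping fraction yields $1+c+\cdots+c^{\ell-1}$ rather than a shifted version; everything else is a direct substitution.
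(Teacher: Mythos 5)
Your proposal is correct and is essentially the paper's own proof: the paper obtains this corollary precisely by setting $z=1$ in Theorem \ref{g5} for the analytic identity and letting $z\to 1$ in Theorem \ref{g5wpi} for the weighted partition identity, with the surviving terms identified exactly as in Lemma \ref{d(n) in limit}. Your careful handling of the limit — noting that only partitions with $\nu_d(\pi)=1$ survive, that $L(\pi)=\ell$ is the multiplicity, and that the resulting exponents run over the divisors of $n$ — supplies the details the paper leaves implicit, and is accurate throughout.
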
  

\begin{proof}[Corollary \textup{\ref{cl}}][]
Let $c=-1$ in the above corollary.
\end{proof}
\begin{remark}
One more representation of $d_{o}(n)$ that one can obtain is
\begin{equation*}
d_{o}(n)=\frac{1}{2} \sum_{ \pi \in \mathcal{P}_{o}(n)} (-1)^{l(\pi) -1 } L(\pi).
\end{equation*}
To see this, write the right-hand side of \eqref{cleqn} as
\begin{align}\label{ram}
\sum_{n=1}^\infty (-1)^{n-1} \frac{(-q)_{n-1}}{ (q)_{n-1}} \frac{q^n}{(1-q^n)^2}=\frac{1}{2}\sum_{n=1}^{\infty}  \frac{2 (-1)^{n-1} (-q)_{n-1}}{(q)_{n-1}} \sum_{k=1}^{\infty} k q^{n k}.
\end{align}
It is well-known that the generating function for overpartitions is 
\begin{align*}
\sum_{n=0}^{\infty} \bar{p}(n) q^n = \frac{(-q)_\infty}{(q)_\infty} = 1+ 2 \sum_{n=1}^{\infty} \frac{(-q)_{n-1}}{(q)_{n-1} } \frac{q^n}{1- q^n },
\end{align*}
where $\bar{p}(n)$ denotes the number of over partitions of $n$. Thus, the presence of $(1-q^n)^2$ in the denominator of \eqref{ram} suggests that the series is the generating function of the total number of appearances of the largest part $L(\pi)$ of a partition $\pi$ weighted by $(-1)^{l(\pi) -1}$.
\end{remark}
Now let $z=-1$ in Theorems \ref{g5} and \ref{g5wpi} to obtain
\begin{corollary}
For $|c|\leq 1$,
\begin{align*}
\sum_{n=1}^{\infty}   \frac{(q)_{n-1}(-1)^n q^n }{ (1-c q^n) (-q)_n } &=-\sum_{n=1}^{\infty}\frac{(-q/c)_{n-1}}{(-q)_{n-1}}\frac{c^{n-1}q^n}{(1+q^{n})^2}.
\end{align*}
 Hence if $c$ is not a function of $q$,
\begin{align*}
\sum_{\pi \in \mathcal{P}(n) } (-1)^{\mathrm{rank}(\pi) - L(\pi)} 2^{\nu_d(\pi)-1} \tfrac{(-1)^{L(\pi)} -c^{L(\pi)} }{-1 - c} = \sum_{\pi \in \mathcal{P}(n) } c^{l(\pi)-1} (-1)^{\#(\pi)} \left(1- \tfrac{1}{c}\right)^{\nu_d(\pi)-1} L(\pi). 
\end{align*}
\end{corollary}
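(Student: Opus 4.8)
The plan is to obtain both assertions of the corollary as direct specializations at $z=-1$ of the two preceding results, precisely as the sentence introducing the corollary indicates; there is essentially no analytic content beyond two bookkeeping simplifications.

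For the series identity, I would set $z=-1$ in equation \eqref{generalization of Entry 5} of Theorem \ref{g5}. The hypotheses $|zq|<1$ and $|c|\le 1$ are preserved since $|{-}q|=|q|<1$, so the substitution is legitimate. The left-hand side becomes $\sum_{n=1}^{\infty}(q)_{n-1}(-1)^nq^n/\big((1-cq^n)(-q)_n\big)$ verbatim. On the right-hand side the prefactor $z$ becomes $-1$, the symbol $(zq/c)_{n-1}$ becomes $(-q/c)_{n-1}$, the symbol $(zq)_n$ becomes $(-q)_n$, and the denominator $1-zq^n$ becomes $1+q^n$, giving $-\sum_{n=1}^{\infty}\frac{(-q/c)_{n-1}}{(-q)_n}\frac{c^{n-1}q^n}{1+q^n}$. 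The single non-formal step is to absorb this last factor: using the Pochhammer factorization $(-q)_n=(1+q^n)(-q)_{n-1}$ collapses the two occurrences of $1+q^n$ in the denominator into $(1+q^n)^2$ and lowers the index of the remaining $q$-Pochhammer symbol, producing exactly the stated right-hand side.

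For the weighted partition identity, I would set $z=-1$ in Theorem \ref{g5wpi}, which is a formal identity in $z$ (valid because $z$ is not a function of $q$). The factor $(1-\tfrac{1}{z})^{\nu_d(\pi)-1}$ becomes $2^{\nu_d(\pi)-1}$, and the quotient $\big(z^{L(\pi)}-c^{L(\pi)}\big)/(z-c)$ becomes $\big((-1)^{L(\pi)}-c^{L(\pi)}\big)/(-1-c)$, matching the stated summand. The right-hand side of Theorem \ref{g5wpi} at $z=-1$ is already in the claimed form $\sum_{\pi\in\mathcal{P}(n)}c^{l(\pi)-1}(-1)^{\#(\pi)}(1-\tfrac{1}{c})^{\nu_d(\pi)-1}L(\pi)$. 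The one observation to make explicit is the rewriting of the sign $z^{l(\pi)+\#(\pi)-L(\pi)}$: since $(-1)^{l(\pi)+\#(\pi)}=(-1)^{l(\pi)-\#(\pi)}=(-1)^{\mathrm{rank}(\pi)}$ by the definition $\mathrm{rank}(\pi)=l(\pi)-\#(\pi)$, this sign equals $(-1)^{\mathrm{rank}(\pi)-L(\pi)}$, which yields the left-hand side exactly as stated.

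I do not expect a genuine obstacle: the corollary is a pure specialization, and its entire substance resides in the two elementary simplifications above—the factorization $(-q)_n=(1+q^n)(-q)_{n-1}$ for the analytic identity and the parity rewriting $(-1)^{l(\pi)+\#(\pi)}=(-1)^{\mathrm{rank}(\pi)}$ for the combinatorial one. The only care required is to confirm that the hypotheses of Theorems \ref{g5} and \ref{g5wpi} survive the substitution $z=-1$, which they do.
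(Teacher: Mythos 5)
Your proposal is correct and takes exactly the paper's route: the paper obtains this corollary simply by setting $z=-1$ in Theorems \ref{g5} and \ref{g5wpi}, leaving the two bookkeeping steps implicit. Your explicit verifications --- the factorization $(-q)_n=(1+q^n)(-q)_{n-1}$ to collapse the denominator into $(1+q^n)^2$, and the parity rewriting $(-1)^{l(\pi)+\#(\pi)}=(-1)^{\mathrm{rank}(\pi)}$ --- are precisely the simplifications the paper's one-line derivation presupposes.
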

\begin{proof}[Corollary \textup{\ref{cli}}][]
Let $c=1$ in the above corollary and observe that
\begin{equation*}
-\sum_{n=1}^{\infty}\frac{q^n}{(1+q^{n})^2} = \sum_{n=1}^{\infty} \frac{ n (-1)^n q^n }{1 - q^n }.
\end{equation*}
\end{proof}

\section{Some more applications of Theorem \ref{gGaravan}}\label{app}
\subsection{Proof of Garvan's identity \eqref{garvan's 2nd thm}}\label{g2t}
Replace $q$ by $q^2$, $z$ by $z/q$, and then $c$ by $z$ in \eqref{gen of Garavan}. This gives
\begin{align}\label{new identity}
\sum_{n=1}^{\infty}  \frac{(-1)^{n-1} z^n q^{n^2} }{(zq;q^2)_{n} (1-z q^{2n}) } = \sum_{m=1}^{\infty}  z^{m} q^{2 m - 1 }  \frac{(q;q^2)_{m-1}}{(zq;q^2)_{m}}.
\end{align}
An application of \eqref{binomial} to the reciprocal of $(zq;q^2)_m$ on the right-hand side of the above equation and then of \eqref{heine} in the third step below gives
{\allowdisplaybreaks\begin{align}\label{use fine result} 
\sum_{m=1}^{\infty}  z^{m} q^{2 m - 1 }  \frac{(q;q^2)_{m-1}}{(zq;q^2)_{m}}&= \sum_{m=1}^{\infty} z^{m} q^{2 m - 1 } (q;q^2)_{m-1} \sum_{j=0}^{\infty} \frac{(q^2; q^2)_{m+j-1} }{(q^2;q^2)_j (q^2; q^2)_{m-1} } (z q)^j \nonumber \\
&=z q \sum_{j=0}^{\infty} (z q)^j {}_{2}\phi_{1} \left(  \begin{matrix} q, & q^{2j+2} \\
& 0 \end{matrix}\, ;q^2,  z q^2 \right)\nonumber\\
& =z q \sum_{j=0}^{\infty} (z q)^j   \frac{(q^{2j+2}; q^2)_{\infty} (z q^3; q^2)_{\infty}}{(z q^2; q^2)_{\infty}} \sum_{k=0}^{\infty} \frac{(zq^2; q^2)_k (q^{2j+2} )^k }{ (z q^3; q^2)_k (q^2; q^2)_k } \nonumber\\
&= z q \frac{(q^{2}; q^2)_{\infty} (z q^3; q^2)_{\infty}}{(z q^2; q^2)_{\infty}} \sum_{k=0}^{\infty} \frac{(zq^2; q^2)_k q^{2k} }{ (z q^3; q^2)_k (q^2; q^2)_k } \sum_{j=0}^{\infty} \frac{(z q^{2k+1})^j}{(q^2; q^2)_j }\nonumber\\
&= z q \frac{(q^{2}; q^2)_{\infty} (z q^3; q^2)_{\infty}}{(z q^2; q^2)_{\infty}} \sum_{k=0}^{\infty} \frac{(zq^2; q^2)_k q^{2k} }{ (z q^3; q^2)_k (q^2; q^2)_k } \frac{1}{(z q^{2k+1}; q^2)_{\infty}}\nonumber\\
&=z q \frac{(q^{2}; q^2)_{\infty} }{(z q^2; q^2)_{\infty} } \sum_{k=0}^{\infty}  \frac{ (zq^2; q^2)_k  }{ (q^2; q^2)_k } \frac{q^{2k}}{1-z q^{2k+1}},
\end{align}}
where in the penultimate step, we applied \eqref{q-binomial thm}.
Now replace $q $ by $q^2$, $t$ by $zq$, $a$ by $1$ and $b$ by $z q^2$ in  \eqref{fine16.3} so that the right-hand side of \eqref{use fine result} can be simplified to
\begin{align}\label{third expression}
z q \frac{(q^{2}; q^2)_{\infty} }{(z q^2; q^2)_{\infty} } \sum_{k=0}^{\infty}  \frac{ (zq^2; q^2)_k  }{ (q^2; q^2)_k } \frac{q^{2k}}{1-z q^{2k+1}}&=\frac{zq}{1-zq^2}\sum_{k=0}^{\infty} \frac{(q^2; q^2)_k }{(z q^4; q^2)_k} (z q)^k\nonumber\\
& = \sum_{k=1}^{\infty} \frac{(q^2; q^2)_{k-1} }{(z q^2; q^2)_{k}} (z q)^{k}.
\end{align}
Thus, from \eqref{use fine result} and \eqref{third expression}, we obtain
\begin{equation}\label{beframfina}
\sum_{m=1}^{\infty}  z^{m} q^{2 m - 1 }  \frac{(q;q^2)_{m-1}}{(zq;q^2)_{m}}= \sum_{k=1}^{\infty} \frac{(q^2; q^2)_{k-1} }{(z q^2; q^2)_{k}} (z q)^{k}.
\end{equation}
By a result of Ramanujan \cite[Entry 1.7.2, p.~29]{abramlostII}, if $|b|<1$ and $a$ is an arbitrary complex number, then 
\begin{align}\label{ramfina}
\sum_{n=0}^{\infty} \frac{(-1)^n (-q;q)_n (- \frac{aq}{b} ; q)_n \, b^n }{(aq; q^2)_{n+1} }  = \sum_{n=0}^{\infty}  \frac{(-1)^n (- \frac{aq}{b} ; q)_n \, b^n q^{\frac{n(n+1)}{2}} }{(-b; q)_{n+1}}.
\end{align}
Replace $a$ by $z q$ and $b$ by $-zq$ in \eqref{ramfina} to get
\begin{align}\label{aftramfina}
\sum_{n=1}^{\infty} \frac{(q^2; q^2)_{n-1} }{(z q^2; q^2)_{n}} (z q)^{n} =  \sum_{n=1}^{\infty} \frac{z^n q^{ \frac{n(n+1)}{2}} (q;q)_{n-1} }{ (zq; q)_n  }.
\end{align}
From  \eqref{new identity}, \eqref{beframfina} and \eqref{aftramfina}, we obtain \eqref{garvan's 2nd thm}. 
\begin{remark}
Let $z=1$ in \eqref{new identity}, \eqref{beframfina} and \eqref{aftramfina}. This gives
\begin{align*}
\sum_{n=1}^{\infty}  \frac{(-1)^{n-1} q^{n^2} }{(q;q^2)_{n} (1-q^{2n}) } = \sum_{n=1}^{\infty}  \frac{q^{2 n - 1 }}{1-q^{2n-1}}=\sum_{n=1}^{\infty}\frac{q^n}{1-q^{2n}}=\sum_{n=1}^{\infty}\frac{q^{n(n+1)/2}}{1-q^n}.
\end{align*}
The equalities between the last three expressions are well-known; see, for example, \cite[p.~28]{macmahon}.
\end{remark}
Now if we let $z=-1$ in \eqref{new identity}, \eqref{beframfina} and \eqref{aftramfina}, we get
\begin{align}\label{garvandiv}
&-\sum_{n=1}^{\infty}  \frac{q^{n^2} }{(-q;q^2)_{n} (1+ q^{2n}) }= \sum_{n=1}^{\infty}  (-1)^{n} q^{2 n - 1 }  \frac{(q;q^2)_{n-1}}{(-q;q^2)_{n}}=\sum_{n=1}^{\infty} \frac{(q^2; q^2)_{n-1} }{(-q^2; q^2)_{n}} (-q)^{n}\nonumber\\
&=\sum_{n=1}^{\infty} \frac{(-1)^n q^{ \frac{n(n+1)}{2}} (q;q)_{n-1} }{ (-q; q)_n  }.
\end{align}
Equation \eqref{garvandiv} encodes interesting number-theoretic and partition-theoretic information. To see this, let $\mathcal{Q}(n)$ be the set of partitions of $n$ in which all parts except possibly the largest part are odd and all odd positive integers less than or equal to the largest part occur as parts. Also let $\ell_{O}(\pi)$ denote the largest odd part in a partition. Then Garvan \cite[Corollary 1.3(ii)]{garvan1} has obtained the weighted partition identity resulting from equating the extreme sides of \eqref{garvandiv}, which can be stated as
\begin{equation}\label{gar1}
\sum_{\pi\in\mathcal{Q}(n)}(-1)^{\frac{\ell_{O}(\pi)+1}{2}+\#(\pi)}=(-1)^{\frac{n(n-1)}{2}}d_{8,1}(n),
\end{equation}
where $d_{8, 1}(n)$ is the number of divisors of $n$ congruent to $\pm 1\pmod{8}$ minus the number of divisors of $n$ congruent to $\pm 3\pmod{8}$.

With the two new expressions linking the extreme sides in \eqref{garvandiv} that we have obtained, further information can be extracted, namely, if $\mathcal{P}_{\text{odd}}(n)$ denotes the number of partitions of $n$ into odd parts, then the expressions on the extreme sides of \eqref{garvandiv} are also equal to 
\begin{equation}\label{gar2}
\sum_{\pi\in\mathcal{P}_{\text{odd}}(n)}2^{\nu_{d}(\pi)-1}(-1)^{\frac{l(\pi)+1}{2}+\#(\pi)}
\end{equation}
since
\begin{equation*}
\sum_{n=1}^{\infty}  (-1)^{n} q^{2 n - 1 }  \frac{(q;q^2)_{n-1}}{(-q;q^2)_{n}}=\sum_{n=1}^{\infty}\bigg(\sum_{\pi\in\mathcal{P}_{\text{odd}}(n)}(-1)^{\frac{l(\pi)+1}{2}+\#(\pi)}2^{\nu_{d}(\pi)-1}\bigg)q^n,
\end{equation*}
as can be seen using similar techniques employed in Section \ref{weight}. 

Moreover, we note that the third expression in \eqref{garvandiv} occurs in a recent work of Wang and Yee \cite{wangyee}. From their Theorem 1.3 and Corollary 5.2, we have
\begin{equation*}
\sum_{n=1}^{\infty} \frac{(q^2; q^2)_{n-1} }{(-q^2; q^2)_{n}} (-q)^{n}=\sum_{k=1}^{\infty}\sum_{m=-\big\lfloor\frac{k}{2}\big\rfloor}^{\big\lfloor\frac{k-1}{2}\big\rfloor}(-1)^{m-1}q^{k^2-2m^2}.
\end{equation*}
Thus, if 
\begin{equation*}
r^{*}(n):=\sum_{k^2-2m^2=n\atop k\geq 1, -\big\lfloor\frac{k}{2}\big\rfloor\leq m\leq\big\lfloor\frac{k-1}{2}\big\rfloor}(-1)^{m-1},
\end{equation*}
then the expressions in \eqref{gar1} and \eqref{gar2} are also equal to $r^{*}(n)$.

\subsection{Another proof of a result of Andrews, Garvan and Liang}
As we have seen, identity \eqref{ffwz} follows from \eqref{gen of Garavan} by letting $z\to 1$. Another proof of it can be obtained through the Bhargava-Adiga summation \cite[Equation (1.1)]{bhargavaadigaitsf} (see also \cite[Equation (1.3)]{andrewsba}), which states that for $|a|<1, |d|<1$ and $b\neq q^m$, $m\in\mathbb{Z}$,
\begin{equation}\label{ba}
\sum_{n=-\infty}^{\infty}\frac{(q/a)_na^n}{(d)_n(1-bq^n)}=\frac{(d/b)_{\infty}(ab)_{\infty}(q)_{\infty}^{2}}{(q/b)_{\infty}(d)_{\infty}(a)_{\infty}(b)_{\infty}}.
\end{equation}
Replace $d$ by $q$ and $b$ by $c$ in \eqref{ba} to get
\begin{align*}
\sum_{n=-\infty}^{\infty} \frac{\left( \frac{q}{a} \right)_n \, a^n  }{(q)_n (1- c q^n) } = \frac{ ( ac)_\infty (q)_\infty }{ (  c)_\infty (a)_\infty }.
\end{align*}
Now split the sum on the left into two sums, one over $n\geq 0$ and the other over $n<0$. Since $1/(q)_n=0$ for $n<0$, the second sum vanishes. Separate the $n=0$ term to get
\begin{align*}
\frac{1}{1-c} + \sum_{n=1}^{\infty} \frac{\left( \frac{q}{a} \right)_n \, a^n  }{(q)_n (1- c q^n) } =  \frac{ ( ac)_\infty (q)_\infty }{ (  c)_\infty (a)_\infty }. 
\end{align*} 
Now let $a\to 0$ to deduce \eqref{ffwz} upon simplification.
\begin{remark}
Let $c=q$ in \eqref{ffwz} and simplify so as to obtain
\begin{align*}
\sum_{n=1}^{\infty} \frac{(-1)^{n-1}  q^{\frac{n(n+1)}{2} } }{(1- q^{n+1}) (q)_n  } = \frac{q}{1-q}. 
\end{align*}
If we let $a\to 0$, $b=zq$ and $c=q$ in \eqref{entry3gen00} and then let $z\to 1$, we obtain
\begin{align*}
\sum_{n=1}^{\infty} \frac{(-1)^{n-1}  q^{\frac{n(n+1)}{2} } }{(1- q^{n+1}) (q)_n  }=\lim_{z\to 1}(1-z)\sum_{n=0}^{\infty} z^n    \sum_{m=1}^{\infty} \frac{z^m q^m }{1- q^{m+n+1}  }.
\end{align*}
Comparing the right-hand sides of the above two equations, we see that
the double series $\displaystyle\sum_{n=0}^{\infty} \sum_{m=1}^{\infty} \frac{q^m }{1- q^{m+n+1}  }$ diverges.
\end{remark}

\section{Important $q$-series identities through differentiation}\label{diffe}
Here we first prove Theorem \ref{genc} and then proceed on deriving Corollaries \ref{spt} and \ref{nsc}.
\begin{proof}[Theorem \textup{\ref{genc}}][]
Differentiate both sides of \eqref{gen of Garavan} with respect to $z$ so as to obtain
\begin{align*}
&\sum_{n=1}^{\infty}\frac{(-1)^{n-1}z^{n-1}q^{n(n+1)/2}}{(1-cq^n)(zq)_n}\left(n+\sum_{k=1}^{n}\frac{zq^k}{1-zq^k}\right)\nonumber\\
&=\frac{1}{c}\sum_{n=1}^{\infty}\frac{(\frac{zq}{c})_{n-1}(cq)^n}{(zq)_n}+\frac{z}{c}\sum_{n=1}^{\infty}\frac{(\frac{zq}{c})_{n-1}(cq)^n}{(zq)_n}\left(\sum_{k=1}^{n-1}\frac{-q^k/c}{1-zq^k/c}+\sum_{k=1}^{n}\frac{q^k}{1-zq^k}\right).
\end{align*}
Now let $z=1$ and employ \eqref{q-binomial thm} to simplify the first series on the right-hand side so that
\begin{align}\label{gencsimp}
&\sum_{n=1}^{\infty}\frac{(-1)^{n-1}nq^{n(n+1)/2}}{(1-cq^n)(q)_n}+\sum_{n=1}^{\infty}\frac{(-1)^{n-1}q^{n(n+1)/2}}{(1-cq^n)(q)_n}\sum_{k=1}^{n}\frac{q^k}{1-q^k}\nonumber\\
&=\frac{1}{c-1}\left(\frac{(q)_{\infty}}{(cq)_{\infty}}-1\right)+\frac{1}{c}\sum_{n=1}^{\infty}\frac{(\frac{q}{c})_{n-1}(cq)^n}{(q)_n}\left(\sum_{k=1}^{n}\frac{q^k}{1-q^k}-\sum_{k=1}^{n-1}\frac{q^k/c}{1-q^k/c}\right).
\end{align}
To simplify the second series on the left, we employ van Hamme's identity \cite{hamme}
\begin{align}\label{hammeid}
\sum_{k=1}^{n} \frac{q^k}{1 - q^k} = \sum_{k=1}^{n}\left[\begin{matrix} n\\k\end{matrix}\right]\frac{(-1)^{k-1} q^{k(k+1)/2}}{(1-q^k)},
\end{align}
which gives
\begin{align}\label{fottbef}
\sum_{n=1}^{\infty} \frac{(-1)^{n-1}  q^{n(n+1)/2}}{(1-cq^n) (q)_n } \sum_{k=1}^n \frac{q^k}{1 - q^k }&=\sum_{n=1}^{\infty} \frac{(-1)^{n-1} q^{n(n+1)/2}}{(1-cq^n) } \sum_{k=1}^{n} \frac{(-1)^{k-1} q^{k(k+1)/2}}{(1-q^k) (q)_k (q)_{n-k}}\nonumber\\
&= \sum_{m=0}^{\infty} \sum_{k=1}^{\infty} \frac{(-1)^{m} q^{\frac{(m+k)(m+k+1)}{2}} }{1 - cq^{m+k}} \frac{q^{k(k+1)/2}}{(q)_k (q)_m (1-q^k) } \nonumber\\
&=\sum_{k=1}^{\infty} \frac{q^{k^2 + k}}{(q)_k (1- q^k)} \sum_{m=0}^{\infty}  \frac{ (-1)^{m} q^{\frac{m(m+1)}{2} + m k } }{ (q)_m  (1-cq^{m+k})  }\nonumber \\
&=\sum_{k=1}^{\infty} \frac{q^{k^2 + k} (q^{k+1})_\infty }{(q)_k (1- q^k)} F(0, q^k; cq^k),
\end{align}
where in the last step, we used the $a\to 0$ case of \eqref{fine16.3}.

To simplify the series on the right-hand side of \eqref{gencsimp}, we use the result of Guo and Zhang \cite[Corollary 3.1]{guozhang} which states that if $n\geq 0$ and $0\leq m\leq n$,
\begin{align*}
&\sum_{k=0\atop k\neq m}^{n}\left[\begin{matrix} n\\k\end{matrix}\right]\frac{(q/z)_k(z)_{n-k}}{1-q^{k-m}}z^k\nonumber\\
&=(-1)^mq^{\frac{m(m+1)}{2}}\left[\begin{matrix} n\\m\end{matrix}\right](zq^{-m})_n\left(\sum_{k=0}^{n-1}\frac{zq^{k-m}}{1-zq^{k-m}}-\sum_{k=0\atop k\neq m}^{n}\frac{q^{k-m}}{1-q^{k-m}}\right).
\end{align*}
Let $m=0$ in the above identity and simplify to obtain
\begin{align}\label{m0}
\sum_{k=1}^{n}\frac{q^k}{1-q^k}-\sum_{k=1}^{n-1}\frac{zq^k}{1-zq^k}=\frac{z}{1-z}-\frac{1}{(z)_n}\sum_{k=1}^{n}\left[\begin{matrix} n\\k\end{matrix}\right]\frac{(q/z)_k(z)_{n-k}z^k}{1-q^k}.
\end{align}
That this is a generalization of \eqref{hammeid} is easily seen by letting $z\to 0$ on both sides. 

Invoking \eqref{m0} with $z=1/c$ in the series on the right-hand side of \eqref{gencsimp}, we see that
\begin{align}\label{fottbefser}
&\frac{1}{c}\sum_{n=1}^{\infty}\frac{(\frac{q}{c})_{n-1}(cq)^n}{(q)_n}\left(\sum_{k=1}^{n}\frac{q^k}{1-q^k}-\sum_{k=1}^{n-1}\frac{q^k/c}{1-q^k/c}\right)\nonumber\\
&=\frac{1}{(c-1)^2}\left(\frac{(q)_{\infty}}{(cq)_{\infty}}-1\right)-\frac{1}{c-1}\sum_{n=1}^{\infty}\frac{(cq)^n}{(q)_n}\sum_{k=1}^{n}\left[\begin{matrix} n\\k\end{matrix}\right]\frac{(cq)_k(1/c)_{n-k}c^{-k}}{1-q^k}\nonumber\\
&=\frac{1}{(c-1)^2}\left(\frac{(q)_{\infty}}{(cq)_{\infty}}-1\right)-\frac{1}{c-1}\sum_{j=0}^{\infty}\frac{(1/c)_j(cq)^j}{(q)_j}\sum_{k=1}^{\infty}\frac{(cq)_k}{(q)_k}\frac{q^k}{1-q^k}.
\end{align}
Finally, substitute \eqref{fottbef} and \eqref{fottbefser} in \eqref{gencsimp} so as to obtain \eqref{finefeqn} upon simplification.
\end{proof}
We now give various corollaries that follow from Theorem \ref{genc}.

It is first shown that Andrews' famous identity for the $\textup{spt}$-function, namely \eqref{sptidentity}, can be derived from Theorem \ref{genc}. This identity was the main ingredient in obtaining his beautiful congruences for $\textup{spt}(n)$ modulo $5, 7$ and $13$ \cite[Theorem 2]{andrews08}. The generating function version \cite[Theorem 4, p. 137]{andrews08} is
\begin{align*}
\sum_{n=1}^{\infty} \mathrm{spt}(n) q^n = \frac{1}{(q)_\infty} \sum_{n=1}^{\infty} \frac{n q^n }{1 -q^n } + \frac{1}{(q)_\infty} \sum_{n=1}^{\infty} \frac{(-1)^n q^{n(3n+1)/2}(1+q^n)}{(1-q^n)^2}.
\end{align*}
Note that \cite[Equations (3.3), (3.4)]{andrews08}
\begin{align}
\frac{1}{(q)_\infty} \sum_{n=1}^{\infty} \frac{n q^n }{1 -q^n }&=\sum_{n=1}^{\infty}np(n)q^n\label{npn}\\
\frac{1}{(q)_\infty} \sum_{n=1}^{\infty} \frac{(-1)^n q^{n(3n+1)/2}(1+q^n)}{(1-q^n)^2}&=\sum_{n=0}^{\infty}\frac{-1}{2}N_{2}(n)q^n.\label{n2ne}
\end{align}
Andrews proved \eqref{n2ne} by applying the operator $\left.\frac{d^2}{dz^2}\right|_{z=1}$ to a representation of the generating function of $N(m, n)$, the number of partitions of $n$ with rank $m$, namely \cite[Equations (2.15), (2.17)]{andrews08}
\begin{equation}\label{nmngf1}
R(z;q):=1+\sum_{m=-\infty}^{\infty}\sum_{n=1}^{\infty}N(m, n)z^mq^n=\frac{(1-z)}{(q)_{\infty}}\sum_{n=-\infty}^{\infty}\frac{(-1)^nq^{n(3n+1)/2}}{1-zq^n}.
\end{equation}
Here we require another representation of $R(z; q)$, that is \cite[Equation (2.16)]{andrews08},
\begin{equation}\label{nmngf2}
R(z;q)=\sum_{n=0}^{\infty}\frac{q^{n^2}}{(zq)_n(z^{-1}q)_n}.
\end{equation}
\begin{proof}[Corollary \textup{\ref{spt}}][]
Let $c\to 1$ on both sides of \eqref{finefeqn} to get
\begin{align}\label{finefeqnc1}
&\sum_{n=1}^{\infty}\frac{(-1)^{n-1}nq^{n(n+1)/2}}{(1-q^n)(q)_n}+\sum_{n=1}^{\infty}\frac{q^{n(n+1)}(q^{n+1})_{\infty}}{(1-q^n)(q)_n}F(0,q^n;q^n)\nonumber\\
&=\lim_{c\to 1}\left\{\frac{-c}{(1-c)^2}+\frac{(q)_{\infty}}{(c)_{\infty}}\left(\frac{c}{1-c}+\sum_{n=1}^{\infty}\frac{(cq)_n}{(q)_n}\frac{q^n}{1-q^n}\right)\right\}.
\end{align}
From \cite[Theorem 3.8, Equation (3.24)]{agl13},
\begin{align}\label{Andrews-Garvan-Liang}
\sum_{n=1}^{\infty} \mathrm{spt}(n) q^n = \frac{1}{(q)_\infty }\sum_{n=1}^{\infty} \frac{(-1)^{n-1} n q^{n(n+1)/2}}{(1-q^n) (q)_n }.
\end{align}
We first show that
\begin{align}\label{befvan}
\sum_{n=1}^{\infty}\frac{q^{n(n+1)}(q^{n+1})_{\infty}}{(1-q^n)(q)_n}F(0,q^n;q^n)=(q)_{\infty}\sum_{n=0}^{\infty}\frac{1}{2}N_{2}(n)q^n.
\end{align}
From \cite[p.~13, Equation (12.33)]{fine},
\begin{align}\label{fott}
F(0,t;t) = \frac{1}{1-t} \sum_{j=0}^{\infty} \frac{t^{2j} q^{j^2} }{(t q)_j^2}.
\end{align}
Invoking \eqref{fott} with $t=q^k$ in \eqref{fottbef}, we see that
\allowdisplaybreaks{\begin{align}\label{fottaft}
\sum_{n=1}^{\infty}\frac{q^{n(n+1)}(q^{n+1})_{\infty}}{(1-q^n)(q)_n}F(0,q^n;q^n)&= (q)_{\infty} \sum_{n=1}^{\infty}   \frac{q^{n^2 + n} }{(q)_n^2 (1- q^n)^2} \sum_{j=0}^{\infty} \frac{q^{j^2 +2n j}}{(q^{n+1})_j^2} \nonumber \\
& = (q)_{\infty} \sum_{n=1}^{\infty}   \frac{q^{ n} }{ (1- q^n)^2} \sum_{j=n}^{\infty} \frac{q^{j^2} }{(q)_j^2} \nonumber \\
 & = (q)_{\infty} \sum_{j=1}^{\infty} \frac{q^{j^2} }{(q)_j^2}    \sum_{n=1}^{j} \frac{q^{ n} }{ (1- q^n)^2}\nonumber\\
&= \frac{1}{2}(q)_{\infty}\left. \frac{\mathrm{d}^2}{\mathrm{d}z^2}  \sum_{n=0}^{\infty} \frac{q^{n^2}}{(z q)_n (z^{-1} q)_n }\right|_{z=1},
\end{align}}
as can be seen from a laborious but straightforward calculation. From \eqref{nmngf2},
\begin{align}\label{fottaft1}
\frac{1}{2}(q)_{\infty}\left. \frac{\mathrm{d}^2}{\mathrm{d}z^2}  \sum_{n=0}^{\infty} \frac{q^{n^2}}{(z q)_n (z^{-1} q)_n }\right|_{z=1}&=\frac{1}{2}(q)_{\infty}\left. \frac{\mathrm{d}^2}{\mathrm{d}z^2} R(z;q)\right|_{z=1}\nonumber\\
&=\frac{1}{2}(q)_{\infty}\sum_{n=0}^{\infty}N_{2}(n)q^n,
\end{align}
where the last step follows easily from the first equality in \eqref{nmngf1} or from \cite[Equation (3.4)]{andrews08}. Equation \eqref{befvan} now follows from \eqref{fottaft} and \eqref{fottaft1}. 

Lastly, we show that
\begin{align}\label{limic}
\lim_{c\to1}\left\{\frac{-c}{(1-c)^2}+\frac{(q)_{\infty}}{(c)_{\infty}}\left(\frac{c}{1-c}+\sum_{n=1}^{\infty}\frac{(cq)_n}{(q)_n}\frac{q^n}{1-q^n}\right)\right\}=\sum_{n=1}^{\infty}\frac{q^n}{1-q^n}.
\end{align}
Let $L$ denote the above limit. Then employing \eqref{q-binomial thm} in the second step below, we see that
\begin{align}\label{limic1}
L&=\lim_{c\to1}\frac{(q)_{\infty}}{(cq)_{\infty}}\lim_{c\to 1}\frac{1}{1-c}\left\{\frac{c}{1-c}-\frac{c}{1-c}\sum_{n=0}^{\infty}\frac{(c)_n}{(q)_n}q^n+\sum_{n=1}^{\infty}\frac{(cq)_n}{(q)_n}\frac{q^n}{1-q^n}\right\}\nonumber\\
&=\lim_{c\to1}\frac{1}{1-c}\left\{-c\sum_{n=1}^{\infty}\frac{(cq)_{n-1}}{(q)_n}q^n+\sum_{n=1}^{\infty}\frac{(cq)_n}{(q)_n}\frac{q^n}{1-q^n}\right\}\nonumber\\
&=\lim_{c\to1}\frac{1}{1-c}\sum_{n=1}^{\infty}\frac{(cq)_{n-1}}{(q)_n}q^n\left(-c+\frac{1-cq^n}{1-q^n}\right)\nonumber\\
&=\sum_{n=1}^{\infty}\frac{q^n}{(1-q^n)^2}\nonumber\\
&=\sum_{n=1}^{\infty}\frac{nq^n}{1-q^n}.
\end{align}
From \eqref{finefeqnc1}, \eqref{npn}, \eqref{Andrews-Garvan-Liang}, \eqref{befvan} and \eqref{limic}, we arrive at \eqref{sptidentity}.
\end{proof}
\begin{remark}
Note that one can equivalently start from Ramanujan's identity \eqref{Garvan's identity}, differentiate both sides with respect to $z$ and then let $z=1$ to arrive at \eqref{sptidentity}. This way, the calculation involving the right-hand side of \eqref{Garvan's identity} would be straightforward in comparison with that in \eqref{limic1}. However, our intention to derive \eqref{sptidentity} from \eqref{finefeqn} is to show the uniformity in the approach in deriving \eqref{sptidentity}, \eqref{c0eqn} and \eqref{nsceqn}.
\end{remark}
\begin{proof}[Corollary \textup{\ref{c0}}][]
Let $c=0$ in Theorem \ref{genc} and divide both sides of the resulting identity by $(q)_{\infty}$. This gives
\begin{align}\label{befmer}
\frac{1}{(q)_{\infty}}\sum_{n=1}^{\infty}\frac{(-1)^{n-1}nq^{n(n+1)/2}}{(q)_n}+\sum_{n=1}^{\infty}\frac{q^{n(n+1)}}{(1-q^n)(q)_n^2}=\sum_{n=1}^{\infty}\frac{q^n}{(1-q^n)(q)_n}.
\end{align}
By a recent result of Merca \cite[Theorem 1]{merca},
\begin{equation}\label{mercaeqn}
\frac{1}{(q)_{\infty}}\sum_{n=1}^{\infty}\frac{(-1)^{n-1}nq^{n(n+1)/2}}{(q)_n}=\sum_{n=1}^{\infty}\frac{q^n}{1-q^n}.
\end{equation}
Substituting \eqref{mercaeqn} in \eqref{befmer}, we arrive at \eqref{c0eqn}.

In order to prove \eqref{dwlpt}, we first observe that
\begin{align}\label{lptcal}
\sum_{n=1}^{\infty}\frac{q^n}{(1-q^n)(q)_n}=\sum_{n=1}^{\infty}\frac{1}{(q)_{n-1}}\frac{q^n}{(1-q^n)^{2}}=\sum_{n=1}^{\infty}\textup{lpt}(n)q^n.
\end{align}
In order to show that
\begin{equation*}
\sum_{n=1}^{\infty}\frac{q^{n(n+1)}}{(1-q^n)(q)_n^2}=\sum_{n=1}^{\infty}w(n)q^n,
\end{equation*}
where $w(n)$ is defined in \eqref{wn}, we write
\begin{align}\label{wncal}
&\sum_{n=1}^{\infty}\frac{q^{n(n+1)}}{(1-q^n)(q)_n^2}\nonumber\\
&=\sum_{n=1}^{\infty}\left(\frac{q}{1-q}\right)^2\cdots\left(\frac{q^{n-1}}{1-q^{n-1}}\right)^2\cdot\frac{q^{2n}}{(1-q^n)^3}\nonumber\\
&=\sum_{n=1}^{\infty}\left(\sum_{k_1=2}^{\infty}(k_1-1)q^{k_1}\right)\cdots\left(\sum_{k_{n-1}=2}^{\infty}(k_{n-1}-1)q^{(n-1)k_{n-1}}\right)\left(\sum_{k_n=2}^{\infty}\frac{1}{2}k_n(k_n-1)q^{nk_n}\right)\nonumber\\
&=\sum_{n=1}^{\infty}\sum_{k_i\geq 2,1\leq i\leq n}^{\infty}\frac{1}{2}k_n(k_n-1)\prod_{i=1}^{n-1}(k_i-1)q^{k_1+2k_2+\cdots+nk_n}\nonumber\\
&=\sum_{n=1}^{\infty}w(n)q^n.
\end{align}

From \eqref{c0eqn}, \eqref{lptcal} and \eqref{wncal}, we deduce \eqref{dwlpt}.
\end{proof}
\begin{proof}[Corollary \textup{\ref{wdpn}}][]
We begin with \footnote{It was during the international conference on the occasion of Ramanujan's 125th birth anniversary at University of Delhi in December 2012 that the first author learned from George E. Andrews about the connection between $\textup{lpt}(n)$ and Zagier's identity in Equation \eqref{and}. Both the authors thank Andrews for the same.}
\allowdisplaybreaks{\begin{align}\label{and}
\sum_{n=1}^{\infty}\textup{lpt}(n)&=\sum_{n=1}^{\infty}\frac{q^n}{(1-q^n)(q)_n}\nonumber\\
&=\sum_{m, n=1}^{\infty}\frac{q^{mn}}{(q)_n}\nonumber\\
&=\sum_{m=1}^{\infty}\left(\frac{1}{(q^m)_{\infty}}-1\right)\nonumber\\
&=\frac{1}{2}-\sum_{n=1}^{\infty}\frac{q^n}{1-q^n}-\frac{1}{2(q)_{\infty}}\sum_{n=1}^{\infty}n\left(\frac{12}{n}\right)q^{\frac{n^2-1}{24}},
\end{align}}
where in the last step we applied \eqref{zagierid}. Combining this with \eqref{c0eqn}, we see that
\begin{equation}\label{preroot}
\sum_{n=1}^{\infty}\frac{q^{n(n+1)}}{(1-q^n)(q)_n^2}=\frac{1}{2}-2\sum_{n=1}^{\infty}\frac{q^n}{1-q^n}-\frac{1}{2(q)_{\infty}}\sum_{n=1}^{\infty}n\left(\frac{12}{n}\right)q^{\frac{n^2-1}{24}}.
\end{equation}
Now
\begin{align}\label{root}
\frac{1}{(q)_{\infty}}\sum_{n=1}^{\infty}n\left(\frac{12}{n}\right)q^{\frac{n^2-1}{24}}=\sum_{n=1}^{\infty}\left(\sum_{k=1}^{\left\lfloor\sqrt{24n+1}\right\rfloor}k\left(\frac{12}{k}\right)p\left(n-\frac{(k^2-1)}{24}\right)\right)q^n.
\end{align}
Thus comparing the coefficients of $q^n$ on both sides of \eqref{preroot} and employing \eqref{wncal} and \eqref{root}, we get \eqref{wnrep}.
\end{proof}
\begin{proof}[Corollary \textup{\ref{nsc}}][]
Let $c=-1$ in \eqref{finefeqn} and use
\cite[Theorem 3.8, Equation (3.25)]{agl13},
\begin{equation*}
\sum_{n=1}^{\infty}\frac{(-1)^{n-1}nq^{n(n+1)/2}}{(q)_n(1+q^n)}=(q)_{\infty}\sum_{n=1}^{\infty}N_{\textup{SC}}(n)q^n,
\end{equation*}
to get
\begin{align}\label{nscagl1}
&(q)_{\infty}\sum_{n=1}^{\infty}N_{\textup{SC}}(n)q^n+\sum_{n=1}^{\infty}\frac{q^{n(n+1)}(q^{n+1})_{\infty}}{(1-q^n)(q)_n}F(0,q^n;-q^n)\nonumber\\
&=\frac{1}{4}-\frac{1}{4}\frac{(q)_{\infty}}{(-q)_{\infty}}+\frac{1}{2}\frac{(q)_{\infty}}{(-q)_{\infty}}\sum_{n=1}^{\infty}\frac{(-q)_n}{(q)_n}\frac{q^n}{1-q^n}.
\end{align}
\end{proof}
We now simplify the second series on the left using \cite[p.~17, Equation (15.51)]{fine},
\begin{equation*}
F(0,q^n;-q^n)=\frac{1-q^n}{(-q^n)_{\infty}}\sum_{j=0}^{\infty}\frac{q^{nj+j(j+1)/2}}{(1-q^{n+j})(q)_j}.
\end{equation*}
Thus
\allowdisplaybreaks{\begin{align}\label{aq}
\sum_{n=1}^{\infty}\frac{q^{n(n+1)}(q^{n+1})_{\infty}}{(1-q^n)(q)_n}F(0,q^n;-q^n)&=\sum_{n=1}^{\infty}\frac{q^{n(n+1)}}{(q)_n}\frac{(q^{n+1})_{\infty}}{(-q^n)_{\infty}}\sum_{j=0}^{\infty}\frac{q^{nj+j(j+1)/2}}{(1-q^{n+j})(q)_j}\nonumber\\
&=\frac{1}{2}\frac{(q)_{\infty}}{(-q)_{\infty}}\sum_{n=1}^{\infty}\frac{q^{n(n+1)/2}(-1)_n}{(q)_n^{2}}\sum_{j=0}^{\infty}\frac{q^{(n+j)(n+j+1)/2}}{(q)_j(1-q^{n+j})}\nonumber\\
&=\frac{1}{2}\frac{(q)_{\infty}}{(-q)_{\infty}}\sum_{k=1}^{\infty}\frac{q^{k(k+1)/2}}{(q)_k(1-q^k)}\sum_{n=1}^{k}\bigg[\begin{matrix} k\\n\end{matrix}\bigg]\frac{(-1)_n}{(q)_n}q^{n(n+1)/2}\nonumber\\
&=\frac{1}{2}\frac{(q)_{\infty}}{(-q)_{\infty}}\sum_{k=1}^{\infty}\frac{q^{k(k+1)/2}}{(q)_k(1-q^k)}\left(\frac{(-q)_k}{(q)_k}-1\right),
\end{align}}
where the last step follows from the $q$-Chu-Vandermonde summation \cite[Corollary 1.2]{overp}. Finally, \eqref{nsceqn} follows from \eqref{nscagl1} and \eqref{aq}.

Before proving Corollary \ref{newdn}, we begin with a lemma.
\begin{lemma}\label{nscssptdlemma}
We have
\begin{align*}
2(-q)_{\infty}\sum_{n=1}^{\infty}N_{\textup{SC}}(n)q^n-\sum_{n=1}^{\infty}\frac{q^{\frac{n(n+1)}{2}}}{(1-q^n)(q)_n}=\sum_{n=1}^{\infty}\frac{q^n}{1-q^n}.
\end{align*}
\end{lemma}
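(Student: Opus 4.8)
The plan is to derive the lemma directly from the just-established Corollary~\ref{nsc}, equation~\eqref{nsceqn}, by clearing denominators and using only elementary reductions. Write
$$S:=\sum_{n=1}^{\infty}N_{\textup{SC}}(n)q^n,\qquad V:=\sum_{n=1}^{\infty}\frac{q^{n(n+1)/2}}{(1-q^n)(q)_n},$$
$$W:=\sum_{n=1}^{\infty}\frac{q^{n(n+1)/2}(-q)_n}{(1-q^n)(q)_n^2},\qquad U:=\sum_{n=1}^{\infty}\frac{(-q)_n}{(q)_n}\frac{q^n}{1-q^n},\qquad E(q):=\sum_{n=1}^{\infty}\frac{q^n}{1-q^n},$$
so that $V$ is the series subtracted in the lemma and $E(q)$ is its right-hand side. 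The bracketed factor $\frac{(-q)_n}{(q)_n}-1$ splits the second series on the left of \eqref{nsceqn} as $W-V$. Multiplying \eqref{nsceqn} throughout by $\frac{2(-q)_{\infty}}{(q)_{\infty}}$ turns $(q)_{\infty}S$ into $2(-q)_{\infty}S$ and removes the factor $(q)_{\infty}/(-q)_{\infty}$ from every term on the right; after rearranging this isolates
\begin{equation*}
2(-q)_{\infty}S-V=\frac{(-q)_{\infty}}{2(q)_{\infty}}-\frac12+U-W.
\end{equation*}

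Next I would dispose of the non-series terms. The telescoping relation $\frac{(-q)_{n-1}}{(q)_{n-1}}\frac{q^n}{1-q^n}=\tfrac12\bigl(\frac{(-q)_n}{(q)_n}-\frac{(-q)_{n-1}}{(q)_{n-1}}\bigr)$ sums to the overpartition generating function $\frac{(-q)_{\infty}}{(q)_{\infty}}=1+2\sum_{n\ge1}\frac{(-q)_{n-1}}{(q)_{n-1}}\frac{q^n}{1-q^n}$ recorded earlier in the paper, so $\frac{(-q)_{\infty}}{2(q)_{\infty}}-\frac12=\sum_{n\ge1}\frac{(-q)_{n-1}}{(q)_{n-1}}\frac{q^n}{1-q^n}=:A$. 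Thus the lemma becomes equivalent to the auxiliary identity $A+U-W=E(q)$. A parallel elementary manipulation, using $\frac{(-q)_{n-1}}{(q)_{n-1}}+\frac{(-q)_n}{(q)_n}=\frac{2}{1-q^n}\frac{(-q)_{n-1}}{(q)_{n-1}}$, collapses $A+U$ into a single series and reduces the target to
\begin{equation*}
2\sum_{n=1}^{\infty}\frac{(-q)_{n-1}}{(q)_{n-1}}\frac{q^n}{(1-q^n)^2}-\sum_{n=1}^{\infty}\frac{q^{n(n+1)/2}(-q)_n}{(1-q^n)(q)_n^2}=E(q).
\end{equation*}

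Proving this last identity is the heart of the matter, and the step I expect to be the main obstacle. I would attack it exactly as the Fine-function series was handled in \eqref{aq}: expand $W$ by the $q$-binomial expansion together with van Hamme's identity \eqref{hammeid}, reverse the order of summation, and collapse the inner sum using the $q$-Chu--Vandermonde summation already invoked in the paper, bringing $W$ into a form directly comparable with $2\sum_{n\ge1}\frac{(-q)_{n-1}}{(q)_{n-1}}\frac{q^n}{(1-q^n)^2}-E(q)$. Should that closed manipulation prove stubborn, a robust fallback is to compare coefficients of $q^N$: each series is an explicit generating function for a weighted overpartition or $\mathcal{P}^{*}$-statistic, so the auxiliary identity becomes a finite weighted-partition identity verifiable term by term. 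The delicate points will be taming the double sum that arises from $W$ and justifying the interchange of summation, since the subsequent passage to Corollary~\ref{newdn} — via Beck's conjecture as proved by Chern — requires having $A+U-W=E(q)$ established in precisely this form and independently of that result.
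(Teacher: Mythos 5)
Your elementary reductions are correct as far as they go: multiplying \eqref{nsceqn} by $2(-q)_{\infty}/(q)_{\infty}$, splitting the bracketed factor as $W-V$, and combining the overpartition expansion $\frac{(-q)_{\infty}}{(q)_{\infty}}=1+2\sum_{n\geq1}\frac{(-q)_{n-1}}{(q)_{n-1}}\frac{q^n}{1-q^n}$ with $\frac{(-q)_{n-1}}{(q)_{n-1}}+\frac{(-q)_n}{(q)_n}=\frac{2}{1-q^n}\frac{(-q)_{n-1}}{(q)_{n-1}}$ does show that, modulo \eqref{nsceqn}, the lemma is equivalent to your auxiliary identity
\begin{equation*}
2\sum_{n=1}^{\infty}\frac{(-q)_{n-1}}{(q)_{n-1}}\frac{q^n}{(1-q^n)^2}-\sum_{n=1}^{\infty}\frac{q^{n(n+1)/2}(-q)_n}{(1-q^n)(q)_n^2}=\sum_{n=1}^{\infty}\frac{q^n}{1-q^n}.
\end{equation*}
But that identity is never proved, and it is not a peripheral gap: writing $(-q)_n=(-q)_{n-1}(1+q^n)$ and invoking \eqref{overpgen} shows that your auxiliary identity is exactly \eqref{newdneqn} in disguise. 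In the paper's logical structure, \eqref{newdneqn} is a \emph{consequence} of this lemma --- the proof of Corollary \ref{newdn} multiplies \eqref{nsceqn} by $2(-q)_{\infty}/(q)_{\infty}$ and then cites the lemma, i.e., it is your first manipulation run in reverse. So your proposal reduces the lemma to the very statement the lemma exists to prove; the burden has been shifted, not discharged. Neither of your suggested attacks closes it: the \eqref{aq}-style manipulation of $W$ is only announced, not carried out (and it is not evident that the resulting double sum collapses via $q$-Chu--Vandermonde), while ``compare coefficients of $q^N$, verifiable term by term'' is not a proof of an infinite family of coefficient identities unless a uniform combinatorial argument is supplied, which it is not.

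The paper's own proof sidesteps this entirely by using inputs external to \eqref{nsceqn}: the Andrews--Garvan--Liang formula $\sum_{n\geq1}N_{\textup{SC}}(n)q^n=\frac{1}{(-q)_{\infty}}\sum_{n\geq1}\frac{(-q)_{n-1}q^n}{1-q^n}$, Andrews' observation that the coefficients $a(n)$ of the latter series count conjugates of compact partitions, Chern's proof of Beck's conjecture giving $a(n)=c(n)=\textup{ssptd}_o(n)$, and the Fokkink--Fokkink--Wang identity \eqref{ffwidty} in the form $\textup{ssptd}_o(n)-\textup{ssptd}_e(n)=d(n)$; the lemma is then just the arithmetic statement $2\,\textup{ssptd}_o(n)-\left(\textup{ssptd}_o(n)+\textup{ssptd}_e(n)\right)=d(n)$. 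To salvage your route you would need an independent $q$-series proof of the auxiliary identity (equivalently, of \eqref{newdneqn}); as written, the combinatorial ingredients the paper relies on --- in particular Chern's theorem --- have not been replaced by anything.
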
 
\begin{proof}[Corollary \textup{\ref{newdn}}][]
From \cite[Equation (1.12)]{agl13},
\begin{equation}\label{agl13nsc}
\sum_{n=1}^{\infty}N_{\textup{SC}}(n)q^n=\frac{1}{(-q)_{\infty}}\sum_{n=1}^{\infty}\frac{(-q)_{n-1}q^n}{1-q^n}.
\end{equation}
Let
\begin{equation}\label{defan}
\sum_{n=1}^{\infty}\frac{(-q)_{n-1}q^n}{1-q^n}=\sum_{n=1}^{\infty}a(n)q^n.
\end{equation}
Then $a(n)$ equals the number of partitions of $n$ in which only the largest part is allowed to repeat. From Andrews \cite[p.~153]{andrewsba}, these partitions are conjugates of compact partitions. Hence if $c(n)$ denotes the number of compact partitions of $n$, then $a(n)=c(n)$. However, by a result conjectured by Beck and recently proved by Chern \cite[Theorem 1.2]{chern}, $c(n)=\textup{ssptd}_o(n)$, where $\textup{ssptd}_o(n)$ denotes the sum of the smallest parts in all partitions of $n$ into distinct parts which are odd in number. Hence
\begin{equation}\label{assptd}
a(n)=\textup{ssptd}_o(n).
\end{equation}
Let $\textup{ssptd}(n)$ denote the sum of the smallest parts in all partitions of $n$ into distinct parts, and $\textup{ssptd}_e(n)$, the same with the added restriction that the parts be even in number. As can be conjured by observing that the left-hand side of \eqref{Uchimura} is the generating function of that of \eqref{ffwidty}, 
\begin{equation}\label{ssptdgen}
\sum_{n=1}^{\infty}\frac{q^{\frac{n(n+1)}{2}}}{(1-q^n)(q)_n}=\sum_{n=1}^{\infty}\textup{ssptd}(n)q^n=\sum_{n=1}^{\infty}\textup{ssptd}_o(n)q^n+\sum_{n=1}^{\infty}\textup{ssptd}_e(n)q^n.
\end{equation}
Thus, from \eqref{agl13nsc}, \eqref{defan}, \eqref{assptd} and \eqref{ssptdgen},
\begin{align*}
2(-q)_{\infty}\sum_{n=1}^{\infty}N_{\textup{SC}}(n)q^n-\sum_{n=1}^{\infty}\frac{q^{\frac{n(n+1)}{2}}}{(1-q^n)(q)_n}&=\sum_{n=1}^{\infty}\left(\textup{ssptd}_o(n)-\textup{ssptd}_e(n)\right)q^n\nonumber\\
&=\sum_{n=1}^{\infty}\frac{q^n}{1-q^n},
\end{align*}
where the last step follows from the Fokkink, Fokkink and Wang identity \eqref{ffwidty}. This proves the lemma.
\end{proof}
\begin{proof}[Corollary \textup{\ref{newdn}}][]
Multiply both sides of \eqref{nsceqn} by $2(-q)_{\infty}/(q)_{\infty}$, employ Lemma \ref{nscssptdlemma} and simplify to obtain
\begin{align}\label{preoverpgen}
\sum_{n=1}^{\infty}\frac{(-q)_n}{(q)_n}\frac{q^n}{1-q^n}-\sum_{n=1}^{\infty}\frac{(-q)_{n}}{(q)_n^{2}}\frac{q^{n(n+1)/2}}{1-q^n}+\frac{1}{2}\frac{(-q)_{\infty}}{(q)_{\infty}}-\frac{1}{2}=\sum_{n=1}^{\infty}\frac{q^n}{1-q^n}.
\end{align}
Now from \cite[Equation (1.3)]{overp},
\begin{equation}\label{overpgen}
\frac{(-q)_{\infty}}{(q)_{\infty}}=\sum_{n=0}^{\infty}\frac{(-1)_n}{(q)_{n}^{2}}q^{n(n+1)/2}.
\end{equation}
Invoke \eqref{overpgen} in \eqref{preoverpgen} and simplify to obtain \eqref{newdneqn}.

To prove \eqref{newdnrep}, note that
\begin{align}\label{newdnrep1}
\sum_{n=1}^{\infty}\frac{(-q)_{n-1}}{(q)_n^{2}}\frac{q^{n(n+3)/2}}{1-q^n}=\sum_{n=1}^{\infty}\bigg(\sum_{\pi\in\mathcal{P}^{*}(n)\atop L(\pi)\geq 2}\frac{L(\pi)(L(\pi)-1)}{2}\prod_{i=1}^{l(\pi)-1}(2\nu(i)-1)\bigg)q^n
\end{align}
can be proved along exact similar lines as \eqref{wncal}. As far as proving 
\begin{align}\label{newdnrep2}
\sum_{n=1}^{\infty}\frac{(-q)_n}{(q)_n}\frac{q^n}{1-q^n}=\sum_{n=1}^{\infty}\bigg(\sum_{\pi\in\mathcal{P}_{o}(n) \atop l(\pi)\text{overlined}}(2L(\pi)-1)\bigg)q^n
\end{align}
is concerned, it is easily seen that
\begin{align*}
\sum_{n=1}^{\infty}\frac{(-q)_n}{(q)_n}\frac{q^n}{1-q^n}&=\sum_{n=1}^{\infty}\frac{(-q)_{n-1}}{(q)_{n-1}}\frac{q^n(1+q^n)}{(1-q^n)^2}\nonumber\\
&=\sum_{n=1}^{\infty}\frac{(-q)_{n-1}}{(q)_{n-1}}\sum_{k_n=1}^{\infty}(2k_n-1)q^{nk_n},
\end{align*}
which proves \eqref{newdnrep2}. Together, \eqref{newdneqn}, \eqref{newdnrep1} and \eqref{newdnrep2} give \eqref{newdnrep}.
\end{proof}
Another special case of \eqref{finefeqn} when $c=q^m, m\geq 1$, is stated below without proof.
\begin{align*}
&\sum_{n=1}^{\infty}\frac{(-1)^{n-1}nq^{n(n+1)/2}}{(1-q^{n+m})(q)_n}+\sum_{n=1}^{\infty}\frac{q^{n(n+1)}(q^{n+1})_{\infty}}{(1-q^n)(q)_n}F(0,q^n;q^{n+m})\nonumber\\
&=\frac{q^m((q)_m-1)}{(1-q^m)^2}+(q)_{m-1}\sum_{n=1}^{\infty}\bigg[\begin{matrix} n+m\\n\end{matrix}\bigg]\frac{q^n}{1-q^n}.
\end{align*}
When $m=1$, it simplifies to
\begin{align}\label{ceqq}
&\sum_{n=1}^{\infty}\frac{(-1)^{n-1}nq^{n(n+1)/2}}{(1-q^{n+1})(q)_n}+\sum_{n=1}^{\infty}\frac{q^{n(n+1)}(q^{n+1})_{\infty}}{(1-q^n)(q)_n}F(0,q^n;q^{n+1})=\sum_{n=1}^{\infty}\frac{q^{n}}{1-q^{n}}
\end{align}

\section{Concluding remarks}\label{cr}
We hope to have demonstrated the richness of partition-theoretic information embedded in our generalizations \eqref{entry3gen}, \eqref{gen of Garavan} and \eqref{generalization of Entry 5} of Ramanujan's \eqref{neglected}, \eqref{Garvan's identity} and \eqref{fifth} respectively. 

In Corollary \ref{gen of sigma(q)}, we have shown that the one-variable generalization of $\sigma(q)$ defined in \eqref{otvg}, namely $\sigma(c, q)$, admits a simpler representation. We have also given the partition-theoretic meaning of its coefficients. Thus we may analogously ask if $\sigma(c, d, q)$, the two-variable generalization of $\sigma(q)$ defined in \eqref{otvg1} also admits a simpler representation than the complicated one given in \cite[Theorem 1.2]{bandix1}. However, following the approach in our paper to accomplish this would first require finding a generalization of \eqref{gen of Garavan} with one more variable $d$. Without doubt, this would be interesting in itself.

Except for the special case $c\to 1$ of Theorem \ref{genc}, that is, the one which gives Andrews' identity for $\textup{spt}(n)$, each of the cases $c=0, -1$ and $q$ in \eqref{c0eqn}, \eqref{newdneqn} and \eqref{ceqq} respectively involves the divisor generating function $\displaystyle\sum_{n=1}^{\infty}\frac{q^n}{1-q^n}$. This is intriguing, to say the least, and certainly merits further study. 

\begin{center}
\textbf{Acknowledgements}
\end{center}
The first author's research is supported by the SERB-DST grant ECR/2015/000070 whereas the second author is a SERB National Post Doctoral Fellow (NPDF) supported by the fellowship PDF/2017/000370. Both sincerely thank SERB-DST for the support.

\end{document}